\renewcommand{\div}{\mathop{\mathrm{div}}\nolimits}
\numberwithin{equation}{section}
\newtheorem{theorem}{Theorem}[section]
\newtheorem{proposition}[theorem]{Proposition}
\newtheorem{lemma}[theorem]{Lemma}
\newtheorem{corollary}[theorem]{Corollary}
\newtheorem{definition}{Definition}[section]
\newtheorem*{theorem*}{Theorem A}
\newtheorem{thm}{Theorem}[section]
\theoremstyle{definition}
\newtheorem{defn}{Definition}[section]
\newtheorem{remark}{Remark}[section]
\def\ep{\varepsilon}
\def\l{\lambda}
\def\R{{\mathbb R} }
\def\r{\R^{n+1}_{+}}
\def\rn{\R^{n}}
\def\B{\mathbb{R}^n}
\def\br{\partial\r}
\def\s{(-\Delta)^s}
\def\ss{(-\Delta)^{\frac{s}{2}}}
\def\b{\mathbb{S}^{n-1}}
\def\ov{\overline{v}}
\def\ou{\overline{u}}
\def\k{\kappa_{n,s}}
\def\ue{u_e^{\lambda}}
\def\D{\Delta}
\def\Xint#1{\mathchoice
{\XXint\displaystyle\textstyle{#1}}%
{\XXint\textstyle\scriptstyle{#1}}%
{\XXint\scriptstyle\scriptscriptstyle{#1}}%
{\XXint\scriptscriptstyle\scriptscriptstyle{#1}}%
\!\int}
\def\XXint#1#2#3{{\setbox0=\hbox{$#1{#2#3}{\int}$ }
\vcenter{\hbox{$#2#3$ }}\kern-.6\wd0}}
\def\dashint{\Xint-}
\begin{document}
\author[M. Fazly]{Mostafa Fazly}
\address{\noindent Mostafa Fazly, Department of Mathematics, University of Texas at San Antonio, San Antonio TX 78249.}
\email{mostafa.fazly@utsa.edu}

\author[J.C. Wei]{Juncheng Wei}
\address{\noindent Juncheng Wei, Department of Mathematics, University of British Columbia, Vancouver, B.C. Canada V6T 1Z2.}
\email{jcwei@math.ubc.ca}

\author[W. Yang]{ Wen Yang}
\address{\noindent Wen ~Yang,~Wuhan Institute of Physics and Mathematics, Chinese Academy of Sciences, P.O. Box 71010, Wuhan 430071, P. R. China.}
\email{wyang@wipm.ac.cn}

\title{Classification of finite Morse index solutions of higher-order Gelfand-Liouville equation}

\maketitle

\begin{abstract}

 We classify finite Morse index solutions of the following  Gelfand-Liouville equation
 \begin{equation*}
(-\Delta)^{s} u= e^u \ \ \text{in} \ \ \mathbb{R}^n,
\end{equation*}
for $1<s<2$ and $s=2$ via a novel monotonicity formula and technical blow-down analysis.  We show that the above equation does not admit any finite Morse index solution with $\ss u$ vanishes at infinity provided $n>2s$ and
\begin{equation*}
\label{1.condition}
\frac{ \Gamma^2(\frac{n+2s}{4}) }{ \Gamma^2 (\frac{n-2s}{4})} < \frac{\Gamma(\frac{n}{2}) \Gamma(1+s)}{ \Gamma(\frac{n-2s}{2})},
 \end{equation*}
where $\Gamma$ is the classical Gamma function.  The cases of $s=1$ and $s=2$ are settled by Dancer and Farina \cite{df,d} and  Dupaigne et al.  \cite{dggw}, respectively, using Moser iteration arguments established by Crandall and Rabinowitz \cite{CR}. The case of $0<s<1$ is established by Hyder-Yang in \cite{hy} applying arguments provided in \cite{ddw,fw}.
\end{abstract}


\noindent
{\it \footnotesize 2010 Mathematics Subject Classification:} {\scriptsize  35J30, 35R11, 35B53, 35B65, 35J70.}\\
{\it \footnotesize Keywords: Finite Morse index solutions,  Gelfand-Liouville equation, monotonicity formula, blow-down analysis, a priori estimates}. {\scriptsize }

\section{Introduction and Main Results}
We prove a monotonicity formula and apply it to classify stable solutions of the fractional Gelfand-Liouville  equation
\begin{equation}\label{main}
(-\Delta)^{s} u= e^u \ \ \text{in} \ \ \mathbb{R}^n,
\end{equation}
where  $(-\Delta)^{s}$  is the fractional Laplacian operator for $1<s<2$ and for $s=2$.  The above equation has been of great interests in the literature for various parameters $s$ and in multidimensional domains.  For the case of $s=1$, Dancer and Farina in \cite{df}, see also \cite{d}, gave a complete classification of finite Morse index solutions to the above equation. This is connected with the problem known as the Gelfand's problem \cite{gel} on bounded domains.   They showed that in dimensions $3\le n\le 9$, the above equation does not admit any stable solution  outside a compact set. In other words, any solution to (\ref{main})
is unstable outside any compact set, and so its Morse index is infinite. The fourth-order analogue of the Gelfand problem, the case of $s=2$, was settled by Dupaigne et al. \cite{dggw} showing that in dimensions $5\le n\le 12$ any solution is unstable outside every compact set assuming that $\lim_{r\to\infty} \bar v(r)=0$ where
\begin{equation}
v=-\Delta u \ \ \text{and} \ \ \bar v(r)= \dashint_{\partial B_r} v d\sigma.
\end{equation}
A similar assumption was imposed in \cite{wx} to classify (not necessarily stable) solutions of the biharmonic problem in $\mathbb R^4$, see also \cite{lin}. Such assumptions are crucial when dealing with higher-order equations in the context. For the fractional Laplacian operator when $0<s<1$, the authors in \cite{hy} applied a monotonicity formula and blow-down analysis arguments initiated in \cite{ddw,ddww,fw}.  We also refer interested readers to \cite{rss} and references therein where the regularity of extremal solutions is discussed.  In the current article, we are interested in such classification results for finite Morse index solutions when $1<s\leq 2$.

 Assume that $0<s<1$, $u\in C^{2\alpha}(\mathbb{R}^n)\cap L_{s/2}(\mathbb{R}^n)$, $\alpha>s>0$ and $L_{\mu}(\mathbb{R}^n)$ \footnote{One can see that $L_{s/2}(\mathbb{R}^n)\subset L_{s}(\mathbb{R}^n)$ and $\ss u$ is well defined for $u\in L_{s/2}(\mathbb{R}^n)$. If the condition $\ss u$ vanishes at infinity is replaced by $\Delta u$ vanishes at infinity, one can just assume that $u\in L_s(\rn).$} is defined by
\[L_{\mu}(\mathbb{R}^n):=\left\{u\in L_{\mathrm{loc}}^1(\mathbb{R}^n):~ \int_{\mathbb{R}^n} \frac{|u(z)|}{(1+|z|)^{n+2\mu}} dz<\infty\right\}.\]
The fractional Laplacian of $u$
\begin{equation*}
(-\Delta)^s u(x):= p.v. \int_{\mathbb R^n} \frac{u(x)-u(z)}{|x-z|^{n+2s}} dz
\end{equation*}
is well-defined for every $x\in\mathbb R^n$. There are different ways of defining the fractional operator $(-\Delta)^{s}$ when $1<s<2$, just like the case of $0<s<1$. Applying the Fourier transform one can define the fractional Laplacian by
\begin{equation*}
\widehat{ (-\Delta)^{s}}u(\zeta)=|\zeta|^{2s} \hat u(\zeta) ,
\end{equation*}
 or equivalently define this operator inductively by  $(-\Delta)^{s}=  (-\Delta)^{s-1} \text{o} (-\Delta)$.  Yang in \cite{yang} gave a characterization of the fractional Laplacian $(-\Delta)^{s}$, where $s$ is  any positive, non-integer number as the Dirichlet-to-Neumann map for a function $u_e$ satisfying a  higher-order elliptic equation in the upper half space with one extra spatial dimension. This is a generalization of the work of Caffarelli and Silvestre in \cite{cs} for the case of $0<s<1$, see also \cite{cg,clo,cc,li}. Throughout this paper set $b:=3-2s$ and define the operator
\begin{equation}
\Delta_b w:=\Delta w+\frac{b}{y} w_y=y^{-b} \div(y^b \nabla w),
\end{equation}
for a function $w\in W^{2,2}(\mathbb R^{n+1},y^b)$.

\begin{theorem*}
$($\cite{yang}$)$ Let $1<s<2$. For functions $u_e \in W^{2,2} (\mathbb R^{n+1}_+, y^b)$ satisfying the equation
\begin{equation}
\Delta^2_b u_e=0 ,
\end{equation}
 on the upper half space for $(x, y) \in \mathbb R^{n}\times \mathbb R_+$ where
$y$ is the special direction, and the
boundary conditions
\begin{eqnarray*}
 \left\{ \begin{array}{lcl}
\hfill u_e(x,0)&=&f(x) , \\
\hfill \lim_{y\to 0} y^{b}\partial_y{u_e(x,0)}&=& 0 ,
\end{array}\right.
\end{eqnarray*}
along $\{y=0\}$ where $f(x)$ is some function defined on $H^s(\mathbb R^n)$ we have the result that
\begin{equation}
 (-\Delta)^s f(x)= C_{n,s} \lim_{y\to 0} y^{b} \partial_y \Delta_b u_e (x,y) .
 \end{equation}
Moreover,
\begin{equation}
 \int_{\mathbb R^n} |\xi|^{2s} |\hat{u}(\xi)|^2 d\xi=C_{n,s} \int_{\mathbb {R}^{n+1}_+} y^b |\Delta _b u_e(x,y)|^2 dx dy.
  \end{equation}
\end{theorem*}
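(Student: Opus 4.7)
The plan is to pass to the Fourier transform in the tangential variables $x$, reducing the fourth-order extension PDE to a one-parameter family of ODEs in $y$ indexed by $\xi \in \mathbb R^n$, each solvable explicitly by modified Bessel functions. Writing $\hat u_e(\xi, y)$ for the partial Fourier transform of $u_e$ in $x$, the equation $\Delta_b^2 u_e = 0$ becomes $L_\xi^2 \hat u_e(\xi, y) = 0$, where
$$L_\xi := \partial_y^2 + \frac{b}{y}\partial_y - |\xi|^2,$$
together with $\hat u_e(\xi, 0) = \hat f(\xi)$, the condition $\lim_{y\to 0^+} y^b \partial_y \hat u_e(\xi, y) = 0$, and decay at infinity encoded by $\hat u_e(\xi, \cdot) \in W^{2,2}(\mathbb R_+, y^b\,dy)$.

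After the substitution $\hat u_e(\xi, y) = y^{s-1} V(|\xi| y)$, which uses $s - 1 = (1-b)/2$, a direct computation reduces $L_\xi$ to the modified Bessel operator of order $\nu = s - 1 \in (0, 1)$. Setting $\hat w := L_\xi \hat u_e$, one has $L_\xi \hat w = 0$, so the only mode of $\hat w$ decaying as $y \to \infty$ is $\hat w(\xi, y) = a(\xi)\, y^{s-1} K_{s-1}(|\xi| y)$ for some coefficient $a(\xi)$. Recovering $\hat u_e$ by variation of parameters against the Wronskian of $\{y^{s-1} K_{s-1}, y^{s-1} I_{s-1}\}$ and retaining only decaying contributions yields
$$\hat u_e(\xi, y) = a(\xi)\, U_1(\xi, y) + b(\xi)\, y^{s-1} K_{s-1}(|\xi| y),$$
for an explicit $U_1$ constructed from Bessel functions.

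Using the small-argument expansion $K_{s-1}(z) \sim \frac{\Gamma(s-1)}{2}(z/2)^{-(s-1)} - \frac{\Gamma(1-s)}{2}(z/2)^{s-1} + O(z^{3-s})$, valid because $s - 1 \in (0, 1)$, the two boundary conditions at $y = 0$ translate into a $2\times 2$ linear system in $(a(\xi), b(\xi))$ that determines both coefficients uniquely in terms of $\hat f(\xi)$ and powers of $|\xi|$. On the Fourier side one then has
$$\mathcal{F}_x\!\Bigl[\lim_{y\to 0^+} y^b \partial_y \Delta_b u_e\Bigr](\xi) = a(\xi)\, \lim_{y\to 0^+} y^b \partial_y \bigl[y^{s-1} K_{s-1}(|\xi| y)\bigr],$$
and the inner limit is a nonzero constant multiple of $|\xi|^{s-1}$: the $y^0$ term in the expansion of $y^{s-1}K_{s-1}(|\xi|y)$ is annihilated by $y^b\partial_y$, while the $y^{2(s-1)}$ term contributes. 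Combined with the explicit form of $a(\xi)$ (which scales like $|\xi|^{s+1}\hat f(\xi)$ times gamma factors), the product yields $|\xi|^{2s}\hat f(\xi)$ up to a positive constant, identified as $C_{n,s}^{-1}$. This is the first identity.

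For the energy identity, Plancherel in $x$ gives
$$\int_{\mathbb R^{n+1}_+} y^b |\Delta_b u_e|^2\,dx\,dy = \int_{\mathbb R^n} |a(\xi)|^2 \int_0^\infty y^{b+2(s-1)} K_{s-1}^2(|\xi| y)\,dy\,d\xi.$$
The substitution $t = |\xi| y$ extracts the scaling factor $|\xi|^{-(b+2s-1)} = |\xi|^{-2}$, leaving the absolute constant $\int_0^\infty t^{b+2s-2} K_{s-1}^2(t)\,dt$, computable via the classical gamma formula for squared-$K$ integrals. Inserting $|a(\xi)|^2$ from the previous paragraph produces a multiple of $\int |\xi|^{2s}|\hat f|^2\,d\xi$, and consistency with the Dirichlet-to-Neumann computation identifies the multiplier as $C_{n,s}$. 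The main technical obstacle is the bookkeeping in the fourth-order boundary analysis: verifying that the two prescribed boundary conditions isolate exactly the regular extension, and that a single constant $C_{n,s}$ emerges from both identities. This reduces to the reflection identity $\Gamma(s-1)\Gamma(2-s) = \pi/\sin((s-1)\pi)$ combined with standard evaluations of $\int_0^\infty t^\mu K_\nu^2(t)\,dt$.
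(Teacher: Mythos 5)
The paper states Theorem A as a quoted result from \cite{yang} and gives no proof of its own, and your Fourier-side reduction to the iterated modified Bessel operator --- with the two boundary conditions selecting the decaying solution (a multiple of $y^{s}K_{s}(|\xi|y)$, whose image under $L_\xi$ is the $y^{s-1}K_{s-1}$ mode you call $\hat w$) and the Dirichlet-to-Neumann symbol $|\xi|^{2s}$ emerging from the $y^{2(s-1)}$ term of the small-argument expansion of $K_{s-1}$ because $b+2(s-1)-1=0$ --- is exactly the argument of that reference. The outline is correct; the only quibble is a sign convention in your expansion of $K_{s-1}$ (the coefficient of $(z/2)^{s-1}$ is $\tfrac{1}{2}\Gamma(-(s-1))=\tfrac{1}{2}\Gamma(1-s)$, already negative for $1<s<2$), which does not affect the structure of the proof.
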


Applying the above theorem to solutions of (\ref{main}) we conclude that the extended function $u_e(x,y)$ where $x=(x_1,\cdots,x_n)$ and $y\in\mathbb R^+$ satisfies
\begin{eqnarray}\label{maine}
 \left\{ \begin{array}{lcll}
\hfill \Delta^2_b u_e&=& 0   \ \ &\text{in}\ \ \mathbb{R}^{n+1}_{+},\\
\hfill  \lim_{y\to 0} y^{b}\partial_y{u_e}&=& 0   \ \ &\text{on}\ \ \partial\mathbb{R}^{n+1}_{+},\\
\hfill \lim_{y\to 0} y^{b} \partial_y \Delta_b u_e &=& C_{n,s} e^u  \ \ &\text{on}\ \ \partial\mathbb{R}^{n+1}_{+} .
\end{array}\right.
\end{eqnarray}
Moreover,
\begin{equation*}
 \int_{\mathbb R^n} |\xi|^{2s} |\hat{u}(\xi)|^2 d\xi=C_{n,s} \int_{\mathbb {R}^{n+1}_+} y^b |\Delta _b u_e(x,y)|^2 dx dy .
   \end{equation*}
Then $u(x)=u_e(x,0)$.

 \begin{definition}
We say that a solution $u$ of (\ref{main}) is stable outside a compact set  if there exists $R>0$ such that
\begin{equation}\label{stability}
\frac{C_{n,s}}{2} \int_{\mathbb R^n} \int_{\mathbb R^n} \frac{ ( \phi(x)-\phi(z) )^2 }{|x-z|^{n+2s}} dx dz=\int_{\mathbb R^n}|\ss\phi|^2dx \ge  \int_{\mathbb R^n} e^u \phi^2dx ,
\end{equation}
 for any $\phi\in C_c^\infty(\mathbb R^n\setminus \overline {B_{R}})$.
 \end{definition}

We say a solution $u$ has finite Morse index if there is a
finite upper bound on its Morse index in any compact set. This is equivalent to the condition that $u$
is stable outside a compact set.  The main goal of this paper is to classify all solutions of (\ref{main}) which are stable outside a compact set. To this end, we first  introduce the corresponding Joseph-Lungren's exponent, see \cite{jl}.    As it is shown by Herbst in \cite{h} (and also \cite{ya}), for $n>2s$ the following Hardy inequality holds
\begin{equation}
\frac{C_{n,s}}{2} \int_{\mathbb R^n} \int_{\mathbb R^n} \frac{ ( \phi(x)-\phi(z) )^2 }{|x-z|^{n+2s}} dx dz \ge \Lambda_{n,s} \int_{\mathbb R^n} |x|^{-2s} \phi^2 dx,
\end{equation}
for any $\phi \in C_c^\infty(\mathbb R^n)$ where the optimal constant is given by
\begin{equation}
 \Lambda_{n,s}:=2^{2s}\frac{ \Gamma^2(\frac{n+2s}{4})  }{ \Gamma^2(\frac{n-2s}{4})}.
 \end{equation}
We now provide an explicit singular solution for  (\ref{main}). Let $1<s<2$, $n>2s$ and set
\begin{equation}
u_{n,s}(x) := -2s \log |x| + \log A_{n,s} ,
\ \ \text{when} \ \
A_{n,s} := 2^{2s} \frac{\Gamma(\frac{n}{2}) \Gamma(1+s)}{ \Gamma(\frac{n-2s}{2})}.
\end{equation}
Then, $u_{n,s}$  solves (\ref{main}) in $\mathbb R^n\setminus \{0\}$. In order to conclude the above
let $0<t<1$. It is known that
$$ (-\Delta)^t \log \frac{1}{|x|^{2t}} = \frac{A_{n,t}}{|x|^{2t}},
\ \text{when} \ \
A_{n,t} :=  2^{2t} \frac{\Gamma(\frac{n}{2}) \Gamma(1+t)}{ \Gamma(\frac{n-2t}{2})} .$$
Since $(-\Delta)^{s} =(-\Delta)o(-\Delta)^{t} $ for $0<t=s-1<1$ we have
\begin{equation*}
(-\Delta)^{s} \log \frac{1}{|x|^{2t}} = -\Delta \left[ \frac{A_{n,t}}{|x|^{2t}} \right] = A_{n,t}
(2t)(n-2t-2)\frac{1}{|x|^{2t+2}}  .
\end{equation*}
Therefore,
\begin{equation*}
(-\Delta)^{s} \log \frac{1}{|x|^{2s-2}} =  A_{n,s-1}
(2s-2)(n-2s)\frac{1}{|x|^{2s}}  .
\end{equation*}
The above implies that
 \begin{equation*}\label{}
(-\Delta)^{s} \log \frac{1}{|x|^{2s}} = A_{n,s-1} 2s(n-2s) \frac{1}{|x|^{2s}}  .
  \end{equation*}
Combining the above coefficients, gives $A_{n,s}$.

The above computations imply that equation (\ref{main}) may
not admit any stable solution if $n>2s$ and $ \Lambda_{n,s} < A_{n,s}$ that reads
\begin{equation}
\label{1.condition}
\frac{ \Gamma^2(\frac{n+2s}{4})  }{ \Gamma^2(\frac{n-2s}{4})} < \frac{\Gamma(\frac{n}{2}) \Gamma(1+s)}{ \Gamma(\frac{n-2s}{2})} .
 \end{equation}
 In general, the above  implies that $2s<n<n_0(s)$ for some $n_0(s)\in\mathbb R^+$. It is straightforward to notice that for the case of $s=1$ the above  reads $2<n<n_0(1)$ when $n_0(1)=10$ and for the case of $s=2$, it reads $4<n<n_0(2)$ when $n_0(2)\approx 12.56$ is the largest positive root of
$  n^2(n-4) -128 (n-2) $. We now elaborate on the above inequality when $1<s<2$. Let $f:[2s,\infty)\to\mathbb R^+$ be the following continuous function
\begin{equation*}
f(x):=\frac{ \Gamma^2(\frac{x+2s}{4}) }{ \Gamma^2(\frac{x-2s}{4}) } \frac{   \Gamma(\frac{x-2s}{2}) }{ \Gamma(\frac{x}{2}) } .
 \end{equation*}
This implies that $f(2s)= \Gamma(s)$. Now, define $g(x):=f(x) - \Gamma(1+s) $. So, $$g(2s)=\Gamma(s) - \Gamma(1+s) = (1-s)\Gamma(s)<0 \ \text{when} \ s>1.$$
 Applying the Stirling's formula for the gamma function one can observe that
\begin{equation}
f(x) \approx O(x^s) \ \ \text{when} \ x\  \text{is large}.
\end{equation}
This implies that when $x$ is large enough $g(x)$ becomes positive. We define $n_0(s)$ to be the first positive root of $g(x)$ when $x>2s$. We therefore conclude that $n_0(s)$ satisfies  $10=n_0(1)<n_0(s)<n_0(2)\approx 12.5$ when $1<s<2$.

Here is our main result.

\begin{thm}
\label{main-th}
Assume that $n>2s$ and $1<s<\alpha<2$. Let $u\in C^{2\alpha}(\rn)\cap L_{s/2}(\rn)$ be a solution of \eqref{main} which is stable outside a compact set and $\ss u$ vanishes at infinity. Then
$$\frac{ \Gamma^2 (\frac{n+2s}{4}) }{ \Gamma^2(\frac{n-2s}{4})} \geq \frac{\Gamma(\frac{n}{2}) \Gamma(1+s)}{ \Gamma(\frac{n-2s}{2})}.$$
\end{thm}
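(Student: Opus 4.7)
My strategy is a proof by contradiction. Assuming \eqref{1.condition}, I will produce the singular solution $u_{n,s}$ as a stable blow-down limit of $u$, and then invoke Herbst's Hardy inequality to deduce $\Lambda_{n,s}\ge A_{n,s}$, a contradiction. This follows the overall blueprint of \cite{hy,ddw,fw,dggw,ddww}, but the implementation on Yang's fourth-order weighted extension \eqref{maine} is the novel technical component.

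\textbf{Step 1 (stability-derived integral estimates).} Use Yang's Plancherel identity to recast \eqref{stability} as the weighted bound
\begin{equation*}
C_{n,s}\int_{\r} y^b(\Delta_b\phi_e)^2\,dx\,dy\;\ge\;\int_{\rn}e^u\phi^2\,dx,\qquad \phi\in C_c^\infty(\rn\setminus\overline{B_R}).
\end{equation*}
Testing with $\phi=(e^{tu}-1)\eta$ for suitable cutoffs $\eta$, and bootstrapping in the spirit of Crandall--Rabinowitz \cite{CR} and Farina \cite{df}, I expect to obtain
\begin{equation*}
\int_{B_R}e^{(1+\eps)u}\,dx\;\le\;C R^{n-2s(1+\eps)}
\end{equation*}
for some $\eps>0$ whose size is quantified by the strict gap $A_{n,s}-\Lambda_{n,s}>0$. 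The hypothesis that $\ss u$ vanishes at infinity is needed to discard boundary contributions at infinity in these test-function manipulations.

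\textbf{Step 2 (monotonicity formula).} Define
\begin{equation*}
E(\la;u_e)\;=\;\la^{2s-n}\Bigl[\tfrac12\int_{B^+_\la} y^b(\Delta_b u_e)^2\,dX - C_{n,s}\int_{B_\la}e^u\,dx\Bigr]+R(\la;u_e),
\end{equation*}
where the correction $R(\la;u_e)$, a sum of boundary integrals over $\partial B^+_\la$, is tuned so that $E$ is scaling-invariant, i.e.\ $E(R\la;u_e)=E(R;u_e^\la)$ for $u_e^\la(x,y):=u_e(\la x,\la y)+2s\log\la$. Differentiating $E$ in $\la$, testing $\Delta_b^2 u_e=0$ against the Pohozaev multiplier $X\cdot\nabla u_e+2s u_e$, and using the conormal and flux conditions in \eqref{maine}, I aim to produce $\frac{d}{d\la}E(\la;u_e)\ge 0$ with equality precisely when $u_e^\la$ is independent of $\la$. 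This is the main technical obstacle of the proof: the degenerate weight $y^b$ with $b=3-2s\in(-1,1)$ combined with the fourth-order operator $\Delta_b^2$ produces a large collection of boundary terms upon repeated integration by parts, and assembling them into a manifestly nonnegative square requires a careful iterated Rellich--Pohozaev calculation tailored to $\Delta_b$ (unlike the second-order case $0<s<1$ treated in \cite{hy}, and unlike the unweighted fourth-order case $s=2$ in \cite{dggw}).

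\textbf{Step 3 (blow-down and contradiction).} By Step 1, $E(\la;u_e)$ is uniformly bounded above, and by Step 2 it is nondecreasing in $\la$, so $E(\la;u_e)\to E_\infty<\infty$ as $\la\to\infty$. Local compactness of $u^\la(x):=u(\la x)+2s\log\la$ (again from Step 1) extracts a subsequential limit $u^\infty$; the scaling identity together with the equality case of the monotonicity formula forces $u^\infty$ to be $(-2s)$-logarithmically homogeneous, so $u^\infty(x)=-2s\log|x|+c$. Plugging into \eqref{main} forces $c=\log A_{n,s}$, hence $u^\infty=u_{n,s}$. Since the compact set outside of which $u$ is stable shrinks to the origin under rescaling, stability passes to the limit, so $u_{n,s}$ is stable on $\rn\setminus\{0\}$. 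Using $e^{u_{n,s}}=A_{n,s}|x|^{-2s}$ in \eqref{stability} and invoking Herbst's Hardy inequality yields $\Lambda_{n,s}\ge A_{n,s}$, contradicting \eqref{1.condition} and completing the proof.
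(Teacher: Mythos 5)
Your overall architecture (monotonicity formula, blow-down, classification of the limit, contradiction with Herbst's inequality) matches the paper's, but Step 3 contains a genuine gap. The equality case of the monotonicity formula only forces the blow-down limit to satisfy $\partial_r u_e^\infty + 2s/r = 0$, i.e.\ $u^\infty(x) = -2s\log|x| + \tau(\theta)$ for some function $\tau$ on the sphere; it does \emph{not} give radial symmetry, so you cannot conclude $u^\infty = -2s\log|x|+c$ with $c$ constant and then simply "plug in to force $c=\log A_{n,s}$." Ruling out stable homogeneous solutions with a nontrivial angular part is a separate piece of work (the paper's Section 3): one first derives from the equation, by testing against radial $\varphi$, the identity $\int_{\mathbb{S}^{n-1}}e^{\tau(\theta)}\,d\theta = A_{n,s}|\mathbb{S}^{n-1}|$, and then tests stability with $\psi=r^{-(n-2s)/2}\eta_\eps(r)$, isolating the divergent $\log(1/\eps)$ terms to obtain $\Lambda_{n,s}|\mathbb{S}^{n-1}|\ge\int_{\mathbb{S}^{n-1}}e^{\tau(\theta)}\,d\theta$. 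Only the combination of these two averaged statements yields $\Lambda_{n,s}\ge A_{n,s}$; your argument as written silently assumes $\tau$ is constant.

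Two further points where your sketch diverges from what actually works. First, in Step 1, testing the quadratic form with $\phi=(e^{tu}-1)\eta$ is the classical local-case device and does not transfer to $(-\Delta)^s$ with $1<s<2$, where there is no chain rule; the paper instead obtains a fractional "half-stability" inequality $\int e^{u/2}\varphi^2\le\int|(-\Delta)^{s/4}\varphi|^2$ by complex interpolation between $L^2(|\xi|^{2st}d\xi)$ and $L^2(e^{tu}dx)$, and then runs the Moser iteration on the Caffarelli--Silvestre extensions of $u$ and $v=(-\Delta)^{s/2}u$. Second, the hypothesis that $(-\Delta)^{s/2}u$ vanishes at infinity is not used to discard boundary terms in test-function manipulations; its role is to kill the linear and quadratic polynomial part of the $s$-harmonic remainder $u^\lambda - w^\lambda$ in the Green-type representation formula, without which neither the uniform boundedness of $E(\lambda)$ nor the compactness needed for the blow-down can be established.
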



This article is organized as follows. In Section \ref{secmf}, we prove a monotonicity formula via rescaling arguments. In Section \ref{sechs}, we classify the homogeneous solutions. In Section \ref{secbd}, we perform a blow-down analysis argument. To do so, we start with some preliminary integral estimates for stable solutions.  Then, we perform a bootstrap argument and Moser iteration type arguments. Applying these results we provide technical estimates for each term in the monotonicity formula. This enables us to prove our main result.  In Section \ref{secfo}, we study the fourth order Gelfand-Liouville equation. 

\bigskip
	\begin{center}
		Notations:
	\end{center}
	\begin{enumerate}
		\item [$B_R^{n+1}$] \quad the ball centered at $0$ with radius $R$ in dimension $(n+1)$.
		\smallskip
		\item [$B_R$] \quad the ball centered at $0$ with radius $R$ in dimension $n$.
		\smallskip
		\item [$B^{n+1}(x_0,R)$] \quad  the ball centered at $x_0$ with radius $R$ in dimension $(n+1)$.
		\smallskip
		\item [$B(x_0,R)$] \quad the ball centered at $x_0$ with radius $R$ in dimension $n$.
		\smallskip
		\item [$X=(x,y)$] \quad represent  points in $\mathbb{R}_+^{n+1}=\B\times[0,\infty).$
        \smallskip
		\item [$C$] \quad  a generic positive constant which may change from line to line.
		\smallskip
        \item [$C(r)$] \quad a positive constant depending on $r$ and may change from line to line.
		\smallskip
        \item [$\sigma$] \quad the Hausdorff measure restriced to the boundary of the ball.
	\end{enumerate}

\section{Monotonicity Formula}\label{secmf}
The key technique of our proof is a monotonicity formula that is developed in this section.  The ideas and methods in this section are motivated by the ones in \cite{fw}, see also \cite{fs,ddww,pak}, and references therein.   Define
\begin{eqnarray*}\label{energy}
E(r,x_0,u_e)& :=& r^{2s-n} \left[   \int_{  \mathbb{R}^{n+1}_{+}\cap B^{n+1}(x_0,r)} \frac{1}{2} y^{3-2s}|\Delta_b u_e|^2dxdy-  C_{n,s} \int_{  \partial\mathbb{R}^{n+1}_{+}\cap B^{n+1}(x_0,r)} e^{u_e}dx   \right] \\
&&
-2r^{ 2s-1-n } \int_{  \mathbb{R}^{n+1}_{+}\cap \partial B^{n+1}(x_0,r)}
 y^{3-2s}  \left(  \frac{\partial u_e}{\partial r} +\frac{2s}{r}\right)^2d\sigma
\\&&
+\frac{1}{2}  r^3      \frac{d}{dr} \left[  r^{ 2s-3-n }    \int_{  \mathbb{R}^{n+1}_{+}\cap \partial B^{n+1}(x_0,r)} y^{3-2s} \left(  \frac{\partial u_e}{\partial r} +\frac{2s}{r}    \right )^2 d\sigma    \right]
\\&&
-4s\left(2s-2-n \right) r^{  2s-3 -n }  \int_{  \mathbb{R}^{n+1}_{+}\cap \partial B^{n+1}(x_0,r)}
 y^{3-2s}  \left( u_e + 2s  \log r \right)d\sigma
\\&&
-2s\left(2s-2-n \right) r^{  2s-2 -n }  \int_{  \mathbb{R}^{n+1}_{+}\cap \partial B^{n+1}(x_0,r)}      y^{3-2s} \left(  \frac{\partial u_e}{\partial r} +\frac{2s}{r}    \right)d\sigma
\\&&+ \frac{1}{2}    \frac{d}{dr} \left[ r^{2s-n}  \int_{  \mathbb{R}^{n+1}_{+}\cap \partial B^{n+1}(x_0,r)} y^{3-2s}\left(  | \nabla u_e|^2 - \left|\frac{\partial u_e}{\partial r}\right|^2 \right) d\sigma\right]
\\&&+ \frac{1}{2}    r^{2s-n-1}  \int_{  \mathbb{R}^{n+1}_{+}\cap \partial B^{n+1}(x_0,r)} y^{3-2s} \left(  | \nabla u_e|^2 - \left |  \frac{\partial u_e}{\partial r} \right|^2 \right) d\sigma.
\end{eqnarray*}

\begin{thm}\label{mono}
Assume that $n>2s-1$. Then, $E(\lambda,x,u_e)$ is a nondecreasing function of $\lambda>0$. Furthermore,
\begin{equation}
\frac{dE(\lambda,x_0,u_e)}{d\lambda} \ge C(n,s) \  \lambda^{2s-2-n}    \int_{  \mathbb{R}^{n+1}_{+}\cap \partial B^{n+1}(x_0,\lambda)}  y^{3-2s}\left( \frac{\partial u_e}{\partial r} +  \frac{2s}{ \lambda}  \right)^2d\sigma ,
\end{equation}
where $C(n,s)=2(n+1-2s)$ is independent from $\lambda$.
\end{thm}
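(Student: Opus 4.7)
The plan is to prove this monotonicity formula by the standard rescaling/Pohozaev technique, adapted here to the degenerate bi-Laplacian $\Delta_b^2$ on $\mathbb{R}^{n+1}_+$. Without loss of generality, translate so that $x_0=0$, and introduce the scale-invariant family
$$u^\lambda(X) := u_e(\lambda X) + 2s\log\lambda, \qquad X=(x,y)\in\mathbb{R}^{n+1}_+.$$
A direct computation gives $(\Delta_b u_e)(\lambda X)=\lambda^{-2}(\Delta_b u^\lambda)(X)$, so $u^\lambda$ again solves the boundary value problem \eqref{maine}; the shift by $2s\log\lambda$ compensates exactly for the scaling of $e^u$. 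The change of variables $X\mapsto X/\lambda$, together with the weight $y^{3-2s}$, supplies precisely the powers $\lambda^{2s-n},\lambda^{2s-1-n},\lambda^{2s-2-n},\lambda^{2s-3-n}$ appearing as prefactors in $E(\lambda,0,u_e)$. Hence
$$E(\lambda,0,u_e)=\bar E(u^\lambda)$$
for an energy $\bar E$ defined on $B_1^{n+1}\cap\{y>0\}$, and therefore $\tfrac{dE}{d\lambda}=\bar E'(u^\lambda)[v^\lambda]$ with $v^\lambda:=\partial_\lambda u^\lambda$. A key elementary computation gives $v^\lambda(X)=X\cdot\nabla u_e(\lambda X)+\tfrac{2s}{\lambda}$, and restricted to $|X|=1$ this becomes $v^\lambda=(\partial_r u_e+\tfrac{2s}{r})|_{r=\lambda}$, which is exactly the combination squared in the claimed lower bound.

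Next I compute the first variation. The main term $\int_{B_1^{n+1}\cap\{y>0\}}y^b\Delta_b u^\lambda\cdot\Delta_b\phi\, dX$ is integrated by parts twice via
$$\int_{B_1^{n+1}\cap\{y>0\}}\!\! y^b\,\Delta_b u\cdot\Delta_b\phi\,dX = \int_{B_1^{n+1}\cap\{y>0\}}\!\! y^b\,\phi\cdot\Delta_b^2 u\,dX + \int_{\partial(B_1^{n+1}\cap\{y>0\})}\!\! y^b\bigl(\Delta_b u\cdot\partial_\nu\phi-\partial_\nu\Delta_b u\cdot\phi\bigr)d\sigma,$$
in which the interior term vanishes by $\Delta_b^2 u^\lambda=0$. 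Along $\{y=0\}$, the boundary condition $\lim_{y\to 0}y^b\partial_y\phi=0$ kills one contribution, while $\lim_{y\to 0}y^b\partial_y\Delta_b u^\lambda=C_{n,s}e^{u^\lambda}$ produces a term cancelling the variation of the explicit $-C_{n,s}\int e^{u^\lambda}\,dx$ piece of $\bar E$. Only integrals on $\partial B_1^{n+1}\cap\{y>0\}$ remain. Substituting $\phi=v^\lambda$ and using the spherical decomposition $\Delta_b u=\partial_r^2 u+\tfrac{n+b}{r}\partial_r u+\tfrac{1}{r^2}\Delta_{\sigma,b}u$ (with $\Delta_{\sigma,b}$ the weighted Laplace--Beltrami operator on the upper half-sphere), one expands the remaining integrals in terms of $\partial_r u_e$, $\partial_r^2 u_e$, and tangential derivatives. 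The correction lines in the definition of $E$ are designed (cf.\ \cite{fw,ddww}) precisely so that their $\lambda$-derivatives absorb every exact total-derivative contribution as well as every term involving $|\nabla u_e|^2-|\partial_r u_e|^2$ and the various cross products, leaving only the positive quadratic
$$C(n,s)\,\lambda^{2s-2-n}\int_{\partial B_1^{n+1}\cap\{y>0\}} y^b\Bigl(\partial_r u_e+\tfrac{2s}{r}\Bigr)^{\!2} d\sigma,\qquad C(n,s)=2(n+1-2s).$$
The hypothesis $n>2s-1$ is exactly what guarantees positivity of this constant.

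The principal obstacle is the algebraic bookkeeping in this last step. The double integration by parts for the weighted bi-Laplacian generates several families of boundary integrals that are not individually signed, involving products of the form $\Delta_b u^\lambda\cdot\partial_r v^\lambda$, $\partial_r\Delta_b u^\lambda\cdot v^\lambda$, and tangential-gradient expressions. Tracking how these recombine --- through the commutator $[\partial_r,\Delta_{\sigma,b}]$ and the radial identity $r\partial_r=X\cdot\nabla$ --- so as to reconstruct the two $\tfrac{d}{dr}[\cdots]$ lines of $E$, while the leftover cross-terms collapse to a multiple of $(\partial_r u_e+\tfrac{2s}{r})^2$ with the correct coefficient $2(n+1-2s)$, requires the careful matching of coefficients that the reverse-engineered definition of $E$ was constructed to accommodate. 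Once this identity is verified, Theorem \ref{mono} follows.
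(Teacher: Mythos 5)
Your outline follows exactly the paper's strategy: rescale via $u_e^\lambda(X)=u_e(\lambda X)+2s\log\lambda$, observe scale invariance of the leading energy, differentiate in $\lambda$, integrate by parts twice against $\Delta_b^2 u_e^\lambda=0$ with the two boundary conditions on $\{y=0\}$, convert radial derivatives into $\lambda$-derivatives via $\lambda\frac{du_e^\lambda}{d\lambda}=r\partial_r u_e^\lambda+2s$, and use the spherical decomposition of $\Delta_b$. All of the structural claims you make along the way are correct, including the scaling identities and the fact that $u_e^\lambda$ solves the same boundary value problem.

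However, there is a genuine gap: the entire substance of this theorem is the algebraic verification that you explicitly defer (``Once this identity is verified, Theorem \ref{mono} follows''). Nothing in your argument actually establishes that the leftover signed term has coefficient $2(n+1-2s)$, nor that the six correction lines in the definition of $E$ --- with their specific coefficients $-2$, $+\tfrac12 r^3\tfrac{d}{dr}[\cdots]$, $-4s(2s-2-n)$, $-2s(2s-2-n)$, and the two $|\nabla u_e|^2-|\partial_r u_e|^2$ lines --- are precisely the antiderivatives produced by the computation. Asserting that $E$ was ``reverse-engineered to accommodate'' the matching is circular from the standpoint of a proof: one must still exhibit the match. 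Moreover, your description of the final recombination is not quite right in a way that matters. After substituting $v_e^\lambda=\lambda^2\frac{d^2u_e^\lambda}{d\lambda^2}+(n+b)\lambda\frac{du_e^\lambda}{d\lambda}+\beta+\theta_1^{-b}\div_{\mathbb S^n}(\theta_1^b\nabla_{\mathbb S^n}u_e^\lambda)$, the derivative splits as $R_1+R_2+R_3$ where only $R_2$ is an exact $\lambda$-derivative; $R_1$ contains, besides $2(\alpha-2)\lambda(\frac{du_e^\lambda}{d\lambda})^2$, a nonnegative term $2\lambda^3(\frac{d^2u_e^\lambda}{d\lambda^2})^2$ that is \emph{discarded}, and the tangential block $R_3$ is only \emph{bounded below} by exact derivatives after discarding the nonnegative term $2\lambda\int\theta_1^b|\nabla_\theta\frac{du_e^\lambda}{d\lambda}|^2\,d\sigma$. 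The statement is an inequality, not an identity, precisely because of these two discarded positive terms; your phrasing (``absorb every exact total-derivative contribution \ldots leaving only the positive quadratic'') suggests an exact cancellation that does not occur. To complete the proof you must carry out the expansion, exhibit $R_1$, $R_2$, $R_3$ explicitly, and undo the rescaling term by term to identify each piece of $E$.
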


\begin{proof}
Suppose that $x_0=0$ and the balls $B_\lambda^{n+1}$ are centered at zero. Set,
\begin{equation}
\bar E(u_e,\lambda):= \lambda^{2s-n} \left(   \int_{  \mathbb{R}^{n+1}_{+}\cap B_\lambda^{n+1}} \frac{1}{2} y^{b} |\Delta_b u_e|^2 dx dy -  C_{n,s}\int_{  \partial\mathbb{R}^{n+1}_{+}\cap B_\lambda^{n+1}} e^{u_e}    dx\right) .
\end{equation}
 Define $v_e:=\Delta_b u_e$, $u_e^\lambda(X):= u_e(\lambda X) +2s \log \lambda$, and $v_e^\lambda(X):=\lambda^{2} v_e(\lambda X)$ where $X=(x,y)\in\mathbb{R}^{n+1}_+$.  Therefore, $\Delta_b u_e^\lambda(X)=v_e^\lambda(X)$ and
 \begin{eqnarray}\label{mainex}
 \left\{ \begin{array}{lcll}
\hfill \Delta_b v^\lambda_e&=& 0   \ \ &\text{in}\ \ \mathbb{R}^{n+1}_{+},\\
\hfill  \lim_{y\to 0} y^{b}\partial_y{u^\lambda_e}&=& 0   \ \ &\text{on}\ \ \partial\mathbb{R}^{n+1}_{+},\\
\hfill \lim_{y\to 0} y^{b} \partial_y v_e^\lambda &=& C_{n,s} e^{u^\lambda_e}  \ \ &\text{on}\ \ \partial\mathbb{R}^{n+1}_{+} .
\end{array}\right.
\end{eqnarray}
In addition,  differentiating with respect to $\lambda$ we have
 \begin{equation}\label{uvl}
 \Delta_b \frac{du_e^\lambda}{d\lambda}=\frac{dv_e^\lambda}{d\lambda}.
 \end{equation}
Note that
 \begin{equation}
 \bar E(u_e,\lambda)=\bar E(u_e^\lambda,1)= \int_{  \mathbb{R}^{n+1}_{+}\cap B_1^{n+1}} \frac{1}{2} y^b  (v_e^\lambda)^2 dx dy -  C_{n,s} \int_{  \partial\mathbb{R}^{n+1}_{+}\cap B_1^{n+1}} e^{u^\lambda_e}  dx .
  \end{equation}
Taking derivate of the energy with respect to $\lambda$, we have
 \begin{equation}\label{energy}
\frac{d\bar E(u_e^\lambda,1)}{d\lambda}= \int_{  \mathbb{R}^{n+1}_{+}\cap B_1^{n+1}} y^b v_e^\lambda \frac{dv_e^\lambda}{d\lambda}\  dx dy -  C_{n,s} \int_{  \partial\mathbb{R}^{n+1}_{+}\cap B_1^{n+1}} e^{u^\lambda_e}   \frac{du_e^\lambda}{d\lambda} dx.
\end{equation}
Using (\ref{mainex}) we end up with
\begin{equation}\label{energy1}
\frac{d\bar E(u_e^\lambda,1)}{d\lambda}= \int_{  \mathbb{R}^{n+1}_{+}\cap B_1^{n+1}}  y^b v_e^\lambda \frac{dv_e^\lambda}{d\lambda}\  dx dy -  \int_{  \partial\mathbb{R}^{n+1}_{+}\cap B_1^{n+1}}  \lim_{y\to 0} y^{b}  \partial_y v_e^\lambda   \frac{du_e^\lambda}{d\lambda} dx.
\end{equation}
From (\ref{uvl}) and by  integration by parts we have
 \begin{eqnarray*}
 \int_{  \mathbb{R}^{n+1}_{+}\cap B_1^{n+1}} y^b v_e^\lambda \frac{dv_e^\lambda}{d\lambda}dxdy
&=&  \int_{  \mathbb{R}^{n+1}_{+}\cap B_1^{n+1}} y^b \Delta_b u_e^\lambda \Delta_b \frac{du_e^\lambda}{d\lambda}dxdy \\&=&  - \int_{  \mathbb{R}^{n+1}_{+}\cap B_1^{n+1}} \nabla \Delta_b u^\lambda_e \cdot \nabla \left(\frac{du^\lambda_e}{d \lambda}\right) y^bdxdy + \int_{ \partial( \mathbb{R}^{n+1}_{+}\cap B_1^{n+1})} \Delta_b u_e^\lambda y^b \partial_{\nu} \left(   \frac{du^\lambda_e}{d \lambda}  \right)d\sigma .
\end{eqnarray*}
Note that
 \begin{eqnarray*}
-\int_{  \mathbb{R}^{n+1}_{+}\cap B_1^{n+1}} \nabla \Delta_b u_e\cdot \nabla \frac{du^\lambda_e}{d \lambda} y^bdxdy &=& \int_{  \mathbb{R}^{n+1}_{+}\cap B_1^{n+1}} \div( \nabla\Delta_b u^\lambda_e y^b) \frac{du^\lambda_e}{d \lambda} dxdy-  \int_{ \partial( \mathbb{R}^{n+1}_{+}\cap B_1^{n+1})} y^b   \partial_\nu (\Delta_b u^\lambda_e )  \frac{du^\lambda_e}{d \lambda}d\sigma
\\& =&\int_{  \mathbb{R}^{n+1}_{+}\cap B_1^{n+1}} y^b  \Delta_b^2 u^\lambda_e  \frac{du^\lambda_e}{d \lambda}dxdy -  \int_{ \partial( \mathbb{R}^{n+1}_{+}\cap B_1^{n+1})} y^b   \partial_\nu (\Delta_b u^\lambda_e )  \frac{du^\lambda_e}{d \lambda}d\sigma
\\& =&-  \int_{ \partial( \mathbb{R}^{n+1}_{+}\cap B_1^{n+1})} y^b   \partial_\nu (\Delta_b u^\lambda_e )  \frac{du^\lambda_e}{d \lambda}d\sigma .
\end{eqnarray*}
Therefore,
 \begin{eqnarray*}
 \int_{  \mathbb{R}^{n+1}_{+}\cap B_1^{n+1}} y^b v_e^\lambda \frac{dv_e^\lambda}{d\lambda}dxdy
&=& \int_{ \partial( \mathbb{R}^{n+1}_{+}\cap B_1^{n+1})} \Delta_b u_e^\lambda y^b \partial_{\nu} \left(   \frac{du^\lambda_e}{d \lambda}  \right)d\sigma-  \int_{ \partial( \mathbb{R}^{n+1}_{+}\cap B_1^{n+1})} y^b   \partial_\nu (\Delta_b u^\lambda_e )  \frac{du^\lambda_e}{d \lambda} d\sigma.
\end{eqnarray*}
Boundary of $\mathbb{R}^{n+1}_{+}\cap B_1^{n+1}$ consists of   $\partial\mathbb{R}^{n+1}_{+}\cap B_1^{n+1}$ and  $\mathbb{R}^{n+1}_{+}\cap \partial B_1^{n+1}$. Therefore,
\begin{eqnarray*}
 \int_{  \mathbb{R}^{n+1}_{+}\cap B_1^{n+1}} y^b v_e^\lambda \frac{dv_e^\lambda}{d\lambda}dxdy
&=&  \int_{  \partial\mathbb{R}^{n+1}_{+}\cap B_1^{n+1}}\left(- v_e^\lambda \lim_{y\to 0}  y^b \partial_y\left (\frac{du_e^\lambda}{d\lambda}\right) +   \lim_{y\to 0} y^b \partial_y v_e^\lambda \frac{du_e^\lambda}{d\lambda}\right)dx \\&&+ \int_{  \mathbb{R}^{n+1}_{+}\cap \partial B_1^{n+1}} \left(y^b v_e^\lambda \partial_r \left (\frac{du_e^\lambda}{d\lambda}\right) -  y^b \partial_r v_e^\lambda \frac{du_e^\lambda}{d\lambda}\right) d\sigma,
\end{eqnarray*}
where $r=|X|$, $X=(x,y)\in \mathbb{R}^{n+1}_+$ and $\partial_r=\nabla\cdot \frac{X}{r}$ is the corresponding radial derivative.   Note that the first integral in the right-hand side vanishes since $\partial_y\left (\frac{du_e^\lambda}{d\lambda}\right)=0$ on $ \partial\mathbb{R}^{n+1}_{+}$. From (\ref{energy}) we obtain
 \begin{eqnarray}\label{energy2}
\frac{d\bar E(u_e^\lambda,1)}{d\lambda}= \int_{  \mathbb{R}^{n+1}_{+}\cap \partial B_1^{n+1}}  y^b\left ( v_e^\lambda \partial_r \left (\frac{du_e^\lambda}{d\lambda}\right) -  \partial_r \left(v_e^\lambda\right) \frac{du_e^\lambda}{d\lambda} \right)d\sigma .
\end{eqnarray}
Now note that from the definition of $u_e^\lambda$ and $v_e^\lambda$ and by differentiating in $\lambda $ we get the following for $X\in\mathbb{R}^{n+1}_+$
\begin{eqnarray}\label{ulambda}
\lambda \frac{du_e^\lambda(X)}{d\lambda}&=&  r \partial_r u_e^\lambda(X)  + 2s \\\label{vlambda}
\lambda \frac{dv_e^\lambda(X)}{d\lambda}&=&  r \partial_r v_e^\lambda(X) + 2 v_e^\lambda(X) .
\end{eqnarray}
Therefore, differentiating with respect to $\lambda $ we get
\begin{equation*}
\lambda \frac{d^2u_e^\lambda(X)}{d\lambda^2} + \frac{du_e^\lambda(X)}{d\lambda}= r \partial_r \frac{du_e^\lambda(X)}{d\lambda} .
\end{equation*}
So, for all $X\in\mathbb{R}^{n+1}_+\cap \partial B_1^{n+1}$
\begin{eqnarray}
\label{u1lambda}
 \partial_r \left(u_e^\lambda(X)\right)
&=& \lambda \frac{du_e^\lambda(X)}{d\lambda} -2s ,
\\\label{u2lambda}
 \partial_r \left(\frac{du_e^\lambda(X)}{d\lambda}\right)
&= &\lambda \frac{d^2u_e^\lambda(X)}{d\lambda^2} + \frac{du_e^\lambda(X)}{d\lambda} ,
\\\label{v2lambda}
\partial_r \left( v_e^\lambda(X)\right) &=& \lambda \frac{dv_e^\lambda(X)}{d\lambda}- 2 v_e^\lambda(X) .
\end{eqnarray}
Substituting (\ref{u2lambda}) and (\ref{v2lambda}) in (\ref{energy2}) we get
 \begin{eqnarray}\label{energyder}
\frac{d\bar E(u_e^\lambda,1)}{d\lambda}&=&
\int_{  \mathbb{R}^{n+1}_{+}\cap \partial B_1^{n+1}} \left(y^b v_e^\lambda  \left (   \lambda \frac{d^2u_e^\lambda}{d\lambda^2} + \frac{du_e^\lambda}{d\lambda}\right) -   y^b \left( \lambda \frac{dv_e^\lambda}{d\lambda}- 2 v_e^\lambda \right) \frac{du_e^\lambda}{d\lambda}\right)d\sigma
\\&=& \nonumber \int_{  \mathbb{R}^{n+1}_{+}\cap \partial B_1^{n+1}} y^b \left( \lambda  v_e^\lambda    \frac{d^2u_e^\lambda}{d\lambda^2} +3  v_e^\lambda \frac{du_e^\lambda}{d\lambda}    -    \lambda \frac{dv_e^\lambda}{d\lambda} \frac{du_e^\lambda}{d\lambda} \right)d\sigma.
\end{eqnarray}
Taking derivative of (\ref{ulambda}) in $r$ we get
\begin{equation*}
r \frac{\partial^2 u_e^\lambda}{\partial r^2}+ \frac{\partial u_e^\lambda}{\partial r}= \lambda \frac{\partial}{\partial r}\left(\frac{du_e^\lambda}{d\lambda} \right) .
\end{equation*}
So, from (\ref{u2lambda}) for all $X\in\mathbb{R}^{n+1}_+\cap \partial B_1^{n+1}$ we have
 \begin{eqnarray}\label{2ru}
\frac{\partial^2 u_e^\lambda}{\partial r^2} = \lambda \frac{\partial}{\partial r}\left(\frac{du_e^\lambda}{d\lambda} \right) -   \frac{\partial u_e^\lambda}{\partial r} = \lambda \left(  \lambda \frac{d^2u_e^\lambda}{d\lambda^2} + \frac{du_e^\lambda}{d\lambda}  \right) -    \left(  \lambda \frac{du_e^\lambda}{d\lambda} -2s \right)
= \lambda^2 \frac{d^2u_e^\lambda}{d\lambda^2} +2s .
\end{eqnarray}
Note that
 \begin{eqnarray*}
v_e^\lambda= \Delta_b u^\lambda_e = y^{-b} \div(y^b \nabla u^\lambda_e).
\end{eqnarray*}
Set  $\theta_1=\frac {y}{r}$. Then, on $\mathbb{R}^{n+1}_+\cap \partial B_1^{n+1}$, we have
\begin{equation*}
 \div(y^b \nabla u_e^\lambda )=[\partial_{rr} u_e +(n+b)\partial_r u_e ]\theta_1^b +\div_{\mathbb S^n} (\theta_1^b \nabla_{\mathbb{S}^n} u_e^\lambda).
 \end{equation*}
From the above, (\ref{u1lambda}) and (\ref{2ru}) we get
\begin{equation*}
v_e^\lambda = \lambda^2 \frac{d^2u_e^\lambda}{d\lambda^2} +   \alpha \lambda \frac{du_e^\lambda}{d\lambda} +   \beta + \theta_1^{-b}\div_{\mathbb S^n} (\theta_1^b \nabla_{\mathbb{S}^n} u_e^\lambda).
\end{equation*}
where $\alpha:=n + b$ and $\beta:=2s\left(1-n-b \right)$. From this and (\ref{energyder}) we get
 \begin{eqnarray*}
\frac{d\bar E(u_e^\lambda,1)}{d\lambda}&=& \int_{  \mathbb{R}^{n+1}_{+}\cap \partial B_1^{n+1}} \theta_1^b \lambda  \left(    \lambda^2 \frac{d^2u_e^\lambda}{d\lambda^2} +\alpha \lambda \frac{du_e^\lambda}{d\lambda} + \beta      \right)   \frac{d^2u_e^\lambda}{d\lambda^2}d\sigma
+  \int_{  \mathbb{R}^{n+1}_{+}\cap \partial B_1^{n+1}}   3\theta_1^b \left(    \lambda^2 \frac{d^2u_e^\lambda}{d\lambda^2} +\alpha \lambda \frac{du_e^\lambda}{d\lambda} + \beta     \right)          \frac{du_e^\lambda}{d\lambda}d\sigma
\\&&- \int_{  \mathbb{R}^{n+1}_{+}\cap \partial B_1^{n+1}}     \theta_1^b  \lambda  \frac{du_e^\lambda}{d\lambda}  \frac{d}{d\lambda}  \left(    \lambda^2 \frac{d^2u_e^\lambda}{d\lambda^2} +\alpha \lambda \frac{du_e^\lambda}{d\lambda} + \beta     \right)d\sigma
 +\int_{  \mathbb{R}^{n+1}_{+}\cap \partial B_1^{n+1}} \theta_1^b \lambda   \frac{d^2u_e^\lambda}{d\lambda^2} \theta_1^{-b}\div_{\mathbb S^n} (\theta_1^b \nabla_{\mathbb{S}^n} u_e^\lambda)d\sigma
 \\&& +\int_{  \mathbb{R}^{n+1}_{+}\cap \partial B_1^{n+1}}  3  \theta_1^b \ \frac{du_e^\lambda}{d\lambda}  \theta_1^{-b}\div_{\mathbb S^n} (\theta_1^b \nabla_{\mathbb{S}^n} u_e^\lambda)d\sigma
 -  \int_{  \mathbb{R}^{n+1}_{+}\cap \partial B_1^{n+1}} \theta_1^b  \lambda \frac{d}{d\lambda}\left(  \theta_1^{-b}\div_{\mathbb S^n} (\theta_1^b \nabla_{\mathbb{S}^n} u_e^\lambda)  \right) \frac{du_e^\lambda}{d\lambda} d\sigma.
\end{eqnarray*}
 Simplifying the integrals we get
\begin{eqnarray*}
\frac{d\bar E(u_e^\lambda,1)}{d\lambda}&=&
 \int_{  \mathbb{R}^{n+1}_{+}\cap \partial B_1^{n+1}}  \theta_1^b \left(  \lambda^3    \left(   \frac{d^2u_e^\lambda}{d\lambda^2}\right)^2 +  \lambda^2   \frac{du_e^\lambda}{d\lambda} \frac{d^2u_e^\lambda}{d\lambda^2}
+ 2 \alpha \lambda  \left(   \frac{d u_e^\lambda}{d\lambda }\right)^2 \right)d\sigma
\\&& +
 \int_{  \mathbb{R}^{n+1}_{+}\cap \partial B_1^{n+1}}  \theta_1^b \left(
 -   \lambda^3  \frac{du_e^\lambda}{d\lambda} \frac{d^3 u_e^\lambda}{d\lambda^3}
+ 3 \beta  \frac{du_e^\lambda}{d\lambda}  +\beta \lambda \frac{d^2u_e^\lambda}{d\lambda^2}  \right)d\sigma
\\&& \nonumber+\int_{  \mathbb{R}^{n+1}_{+}\cap \partial B_1^{n+1}} \left(\lambda \frac{d^2u_e^\lambda}{d\lambda^2} \div_{\mathbb S^n} (\theta_1^b \nabla_{\mathbb{S}^n} u_e^\lambda)  +3  \div_{\mathbb S^n} (\theta_1^b \nabla_{\mathbb{S}^n} u_e^\lambda)    \frac{du_e^\lambda}{d\lambda}    -    \lambda \frac{d}{d\lambda}\left(  \div_{\mathbb S^n} (\theta_1^b \nabla_{\mathbb{S}^n} u_e^\lambda)   \right) \frac{du_e^\lambda}{d\lambda}\right)d\sigma .
\end{eqnarray*}
Note that
\begin{eqnarray*}
\lambda^2   \frac{du_e^\lambda}{d\lambda} \frac{d^2u_e^\lambda}{d\lambda^2}
&=& \frac{1}{2}  \frac{d}{d\lambda} \left[ \lambda^2   \left(   \frac{d u_e^\lambda}{d\lambda}\right)^2 \right] - \lambda
 \left(   \frac{d u_e^\lambda}{d\lambda }\right)^2
\\
\lambda   \frac{d^2u_e^\lambda}{d\lambda^2}
&=&   \frac{d}{d\lambda} \left[ \lambda   \frac{d u_e^\lambda}{d\lambda} \right] -  \frac{d u_e^\lambda}{d\lambda }
\\
\lambda^3   \frac{du_e^\lambda}{d\lambda} \frac{d^3 u_e^\lambda}{d\lambda^3}
&=&  \frac{1}{2}  \frac{d}{d\lambda} \left[ \lambda^3   \frac{d}{d\lambda} \left[ \left(   \frac{d u_e^\lambda}{d\lambda}\right)^2  \right] \right]
-\frac{3}{2} \lambda^2  \frac{d}{d\lambda}  \left[ \left(   \frac{d u_e^\lambda}{d\lambda}\right)^2  \right]
-  \lambda^3 \left(   \frac{d^2 u_e^\lambda}{d\lambda^2}\right)^2 .
\end{eqnarray*}
Combining the above, we conclude
\begin{eqnarray*}
\frac{d\bar E(u_e^\lambda,1)}{d\lambda}&=&
 \int_{  \mathbb{R}^{n+1}_{+}\cap \partial B_1^{n+1}}  \theta_1^b \left[ 2 \lambda^3    \left(   \frac{d^2u_e^\lambda}{d\lambda^2}\right)^2 +2(\alpha-2) \lambda \left(    \frac{du_e^\lambda}{d\lambda}\right)^2 \right]d\sigma
 \\&& \nonumber + \int_{  \mathbb{R}^{n+1}_{+}\cap \partial B_1^{n+1}}  \theta_1^b \left[ 2  \frac{d}{d\lambda} \left( \lambda^2   \left(   \frac{d u_e^\lambda}{d\lambda}\right)^2 \right)  -\frac{1}{2} \frac{d}{d\lambda}  \left(   \lambda^3      \frac{d}{d\lambda} \left(   \frac{d  u_e^\lambda }{d\lambda} \right)^2     \right) +2 \beta    \frac{d  u_e^\lambda }{d\lambda}
+ \beta    \frac{d}{d\lambda} \left( \lambda  \frac{d  u_e^\lambda }{d\lambda} \right)          \right]d\sigma
\\&& \nonumber+\int_{  \mathbb{R}^{n+1}_{+}\cap \partial B_1^{n+1}} \left(\lambda \frac{d^2u_e^\lambda}{d\lambda^2} \div_{\mathbb S^n} (\theta_1^b \nabla_{\mathbb{S}^n} u_e^\lambda)  +3  \div_{\mathbb S^n} (\theta_1^b \nabla_{\mathbb{S}^n} u_e^\lambda)    \frac{du_e^\lambda}{d\lambda}    -    \lambda \frac{d}{d\lambda}\left(  \div_{\mathbb S^n} (\theta_1^b \nabla_{\mathbb{S}^n} u_e^\lambda)   \right) \frac{du_e^\lambda}{d\lambda}\right)d\sigma
\\&=&:R_1+R_2+R_3.
\end{eqnarray*}
Note from the assumptions, $\alpha-2>0$. So, $R_1\ge 0$. In order to rewrite $R_2$, we apply the following
\begin{equation}
R_2=\frac{d}{d\lambda} \int_{  \mathbb{R}^{n+1}_{+}\cap \partial B_1^{n+1}}  \theta_1^b \left[ 2   \lambda^2   \left(   \frac{d u_e^\lambda}{d\lambda}\right)^2     -\frac{1}{2}    \lambda^3      \frac{d}{d\lambda} \left(   \frac{d  u_e^\lambda }{d\lambda} \right)^2      +2 \beta    u_e^\lambda
+ \beta   \lambda  \frac{d  u_e^\lambda }{d\lambda}          \right]d\sigma.
\end{equation}
We now apply integration by parts to simplify the terms appeared in $R_3$.
\begin{eqnarray*}
 R_3 &=&  \int_{  \mathbb{R}^{n+1}_{+}\cap \partial B_1^{n+1}}   \left(\lambda \frac{d^2u_e^\lambda}{d\lambda^2} \div_{\mathbb S^n} (\theta_1^b \nabla_{\mathbb{S}^n} u_e^\lambda)  +3  \div_{\mathbb S^n} (\theta_1^b \nabla_{\mathbb{S}^n} u_e^\lambda)    \frac{du_e^\lambda}{d\lambda}    -    \lambda \frac{d}{d\lambda}\left(  \div_{\mathbb S^n} (\theta_1^b \nabla_{\mathbb{S}^n} u_e^\lambda)   \right) \frac{du_e^\lambda}{d\lambda}\right)d\sigma \\
 &=&  \int_{  \mathbb{R}^{n+1}_{+}\cap \partial B_1^{n+1}}\left( - \theta_1^b  \lambda  \nabla_{\mathbb S^n} u_e^\lambda \cdot   \nabla_{\mathbb S^n}  \frac{d^2u_e^\lambda}{d\lambda^2} - 3 \theta_1^b   \nabla_{\mathbb S^n}  u_e^\lambda \cdot    \nabla_{\mathbb S^n}  \frac{du_e^\lambda}{d\lambda}  +  \theta_1^b\lambda \left|     \nabla_{\mathbb S^n}  \frac{du_e^\lambda}{d\lambda}   \right|^2\right)d\sigma \\
&=& - \frac{\lambda}{2}  \frac{d^2}{d\lambda^2} \left(  \int_{  \mathbb{R}^{n+1}_{+}\cap \partial B_1^{n+1}} \theta_1^b  |\nabla_\theta u_e^\lambda |^2  d\sigma \right) -\frac{3}{2} \frac{d}{d\lambda} \left(  \int_{  \mathbb{R}^{n+1}_{+}\cap \partial B_1^{n+1}}  \theta_1^b |\nabla_\theta u_e^\lambda |^2   d\sigma\right)+2\lambda \int_{  \mathbb{R}^{n+1}_{+}\cap \partial B_1^{n+1}}  \theta_1^b \left|    \nabla_\theta \frac{du_e^\lambda}{d\lambda}   \right|^2d\sigma
\\&=& - \frac{1}{2}  \frac{d^2}{d\lambda^2} \left(   \lambda      \int_{  \mathbb{R}^{n+1}_{+}\cap \partial B_1^{n+1}} \theta_1^b  |\nabla_\theta u_e^\lambda |^2  d\sigma \right) -\frac{1}{2} \frac{d}{d\lambda} \left(  \int_{  \mathbb{R}^{n+1}_{+}\cap \partial B_1^{n+1}} \theta_1^b  |\nabla_\theta u_e^\lambda |^2  d\sigma \right)+2\lambda \int_{  \mathbb{R}^{n+1}_{+}\cap \partial B_1^{n+1}} \theta_1^b \left|    \nabla_\theta \frac{du_e^\lambda}{d\lambda}   \right|^2d\sigma
\\&\ge& - \frac{1}{2}  \frac{d^2}{d\lambda^2} \left(   \lambda      \int_{  \mathbb{R}^{n+1}_{+}\cap \partial B_1^{n+1}}   \theta_1^b |\nabla_\theta u_e^\lambda |^2   d\sigma\right) -\frac{1}{2} \frac{d}{d\lambda} \left(  \int_{  \mathbb{R}^{n+1}_{+}\cap \partial B_1^{n+1}}  \theta_1^b |\nabla_\theta u_e^\lambda |^2  d\sigma \right) .
 \end{eqnarray*}
Scaling the terms in the above we have
\begin{eqnarray*}
   \frac{d}{d\lambda} \int_{  \mathbb{R}^{n+1}_{+}\cap \partial B_1^{n+1}}
\theta_1^b   \lambda^2 \left(   \frac{d u_e^\lambda}{d\lambda}\right)^2  d\sigma  &=&     \frac{d}{d\lambda} \left[    \lambda^{ 2-b-n } \int_{  \mathbb{R}^{n+1}_{+}\cap \partial B_\lambda^{n+1}}
y^b  \left(\partial_r u_e +\frac{2s}{\lambda}\right)^2 d\sigma  \right] ,
 \\
\frac{d}{d\lambda} \int_{  \mathbb{R}^{n+1}_{+}\cap \partial B_1^{n+1}}
\theta_1^b   \left[  \lambda^3      \frac{d}{d\lambda} \left(   \frac{d  u_e^\lambda }{d\lambda} \right)^2     \right]d\sigma &=&  \frac{d}{d\lambda}  \left[   \lambda^3      \frac{d}{d\lambda} \left(  \lambda^{ -b-n }    \int_{  \mathbb{R}^{n+1}_{+}\cap \partial B_\lambda^{n+1}} y^b \left[ \partial_r u_e +\frac{2s}{\lambda}    \right]^2     \right)  d\sigma \right] ,
 \\
\frac{d}{d\lambda}  \int_{  \mathbb{R}^{n+1}_{+}\cap \partial B_1^{n+1}}  \theta_1^b \lambda  \frac{d  u_e^\lambda }{d\lambda} d\sigma  &=& \frac{d}{d\lambda} \left( \lambda^{  1-b -n }  \int_{  \mathbb{R}^{n+1}_{+}\cap \partial B_\lambda^{n+1}}      y^b \left[ \partial_r u_e +\frac{2s}{\lambda}    \right]  d\sigma  \right).
\end{eqnarray*}
 Note that the two terms that appear as lower bound for $R_3$ are of the form
  \begin{eqnarray*}
 \frac{d^2}{d\lambda^2} \left(   \lambda      \int_{  \mathbb{R}^{n+1}_{+}\cap \partial B_1^{n+1}} \theta_1^b  |\nabla_\theta u_e^\lambda |^2  d\sigma \right) &=&  \frac{d^2}{d\lambda^2} \left[  \lambda^{2s-n}   \int_{  \mathbb{R}^{n+1}_{+}\cap \partial B_\lambda^{n+1}} y^b  \left(  |\nabla  u_e|^2-\left|     \frac{\partial u_e}{\partial r}\right|^2  \right)d\sigma
  \right] ,
  \\
   \frac{d}{d\lambda} \left(  \int_{  \mathbb{R}^{n+1}_{+}\cap \partial B_1^{n+1}}  \theta_1^b |\nabla_\theta u_e^\lambda |^2  d\sigma \right) &=&  \frac{d}{d\lambda} \left[  \lambda^{2s-n-1}     \int_{  \mathbb{R}^{n+1}_{+}\cap \partial B_\lambda^{n+1}} y^b \left(  |\nabla  u_e|^2-\left|     \frac{\partial u_e}{\partial r}\right|^2  \right) d\sigma     \right].
  \end{eqnarray*}
\end{proof}

\section{Homogeneous Solutions}\label{sechs}
In this section, we examine homogeneous solution of the form $\tau(\theta)-2s\log r$. Here we follow the arguments in \cite{fw,hy}.

\begin{thm}
\label{th3.1}	
There is no stable solution of \eqref{main} of the form $\tau(\theta)-2s\log r$ provided \eqref{1.condition} holds.
\end{thm}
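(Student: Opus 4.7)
The plan is to reduce \eqref{main} under the homogeneous ansatz to an equation on the sphere, integrate it to pin $\int_{\mathbb{S}^{n-1}}e^\tau\,d\sigma$ exactly, and then invoke the stability condition \eqref{stability} with a truncated Hardy extremizer to obtain a contradicting upper bound.

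First, I would perform the spherical reduction. Writing $u(x)=\tau(\theta)-2s\log r$ with $r=|x|$, $\theta=x/r$, and extending $\tau$ to the degree-zero homogeneous function $\tau^0(x):=\tau(x/|x|)$, linearity of $(-\Delta)^s$ combined with the Introduction's identity $(-\Delta)^s(-2s\log r)=A_{n,s}r^{-2s}$ gives
\[
(-\Delta)^s u(x)=r^{-2s}\bigl(\mathcal{K}_s\tau(\theta)+A_{n,s}\bigr),
\]
where $\mathcal{K}_s$ is the linear, formally self-adjoint nonlocal operator on $\mathbb{S}^{n-1}$ characterized by $(-\Delta)^s\tau^0(x)=r^{-2s}\mathcal{K}_s\tau(\theta)$, and $\mathcal{K}_s(1)=0$ since $(-\Delta)^s1\equiv0$. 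Thus $\tau$ satisfies
\[
\mathcal{K}_s\tau+A_{n,s}=e^\tau\qquad\text{on }\mathbb{S}^{n-1}.
\]
Integrating this identity against $1$ and using $\mathcal{K}_s(1)=0$ together with the formal self-adjointness produces the exact value
\[
\int_{\mathbb{S}^{n-1}}e^\tau\,d\sigma=A_{n,s}\,|\mathbb{S}^{n-1}|.
\]

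Next, I would exploit stability with a truncated Hardy extremizer. Into \eqref{stability} I would insert
\[
\phi(x):=|x|^{-(n-2s)/2}\,\eta\!\left(\tfrac{\log|x|-T}{L}\right),
\]
with $\eta\in C_c^\infty(\mathbb{R})$ and $T$ large enough that $\operatorname{supp}\phi\subset\mathbb{R}^n\setminus\overline{B_R}$. Passing to polar coordinates and setting $t=\log r$ factorizes
\[
\int_{\mathbb{R}^n}e^u\phi^2\,dx=L\!\left(\int_{\mathbb{R}}\eta^2\,dt\right)\int_{\mathbb{S}^{n-1}}e^\tau\,d\sigma,
\]
while the fact that $|x|^{-(n-2s)/2}$ saturates the Hardy inequality yields
\[
\int_{\mathbb{R}^n}|(-\Delta)^{s/2}\phi|^2\,dx=L\!\left(\int_{\mathbb{R}}\eta^2\,dt\right)\Lambda_{n,s}\,|\mathbb{S}^{n-1}|+O(1),
\]
with the $O(1)$ error arising from derivatives of $\eta$ scaled by $1/L$. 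Dividing by $L\int\eta^2$ and sending $L\to\infty$ gives $\int_{\mathbb{S}^{n-1}}e^\tau\,d\sigma\le\Lambda_{n,s}\,|\mathbb{S}^{n-1}|$, which combined with the identity from the reduction step forces $A_{n,s}\le\Lambda_{n,s}$, contradicting \eqref{1.condition}.

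The hardest step, I expect, is the sharp asymptotics of $\int|(-\Delta)^{s/2}\phi|^2$ for the truncated extremizer in the range $1<s<2$: the extremizer itself lies outside the energy class, so the $O(1)$ correction from the logarithmic cutoff $\eta$ has to be controlled quantitatively. This is most naturally done through Yang's extension, by computing $\int y^b|\Delta_b\phi_e|^2$ on the weighted upper half-space and bounding the commutator between $\Delta_b$ and the cutoff. A related subtlety is the rigorous justification of self-adjointness of $\mathcal{K}_s$, since the degree-zero homogeneous extension $\tau^0$ does not lie in $L_{s/2}(\mathbb{R}^n)$; this can again be handled by passing to the bilinear form on the half-space, which is symmetric by construction.
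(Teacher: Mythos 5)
Your proposal is correct and follows essentially the same route as the paper: first the identity $\int_{\mathbb{S}^{n-1}}e^{\tau}\,d\sigma=A_{n,s}|\mathbb{S}^{n-1}|$ from the equation, then the stability inequality tested on a logarithmically truncated Hardy extremizer $r^{-(n-2s)/2}\times(\text{cutoff})$, whose quadratic form is $\Lambda_{n,s}|\mathbb{S}^{n-1}|$ times the divergent logarithmic factor plus a bounded remainder, forcing $A_{n,s}\le\Lambda_{n,s}$ and contradicting \eqref{1.condition}. The only presentational difference is that the paper obtains the first identity by testing the weak formulation against radial $\varphi\in C_c^\infty(\mathbb{R}^n)$ (using $\int_{\mathbb{R}^n}\tau(\theta)(-\Delta)^{s}\varphi\,dx=0$), thereby sidestepping the self-adjointness of your spherical operator $\mathcal{K}_s$, and it identifies the constant $\Lambda_{n,s}$ via an explicit kernel formula rather than through the extension.
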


\begin{proof}
Since $u$ satisfies \eqref{main}, we have for any radially symmetric function $\varphi\in C_c^\infty(\rn)$
\begin{align*}
\int_{\rn}e^{\tau(\theta)-2s\log|x|}\varphi dx
&=\int_{\rn}(\tau(\theta)-2s\log|x|)(-\Delta)^s\varphi(x)dx\\
&=\int_{\rn}(-2s\log|x|)(-\Delta)^s\varphi(x)dx=A_{n,s}\int_{\rn}\frac{\varphi}{|x|^{2s}}dx,
\end{align*}
where we used
\begin{equation*}
\int_{\rn}\tau(\theta) \s\varphi dx=0\quad \mbox{for any radially symmetric function}~\varphi\in C_c^\infty(\rn) ,
\end{equation*}
and
\begin{equation*}
\s\log\frac{1}{|x|^{2s}}=A_{n,s}\frac{1}{|x|^{2s}}
=2^{2s}\dfrac{\Gamma(\frac{n}{2})\Gamma(1+s)}{\Gamma(\frac{n-2s}{2})}
\frac{1}{|x|^{2s}}.
\end{equation*}
Then,  we derive that
\begin{equation*}
0=\int_{\rn}(e^{\tau(\theta)}-A_{n,s})\frac{\varphi}{|x|^{2s}}dx
=\int_0^\infty r^{n-1-2s}\varphi (r)\int_{\mathbb{S}^{n-1}}(e^{\tau(\theta)}-A_{n,s})d\theta dr,
\end{equation*}
which implies
\begin{equation}
\label{3.int1}
\int_{\mathbb{S}^{n-1}}e^{\tau(\theta)}d\theta=A_{n,s}|\b|.
\end{equation}

We set a radially symmetric smooth cut-off function
\begin{equation*}
\eta(x)=\begin{cases}
1, \qquad &\mathrm{for}~|x|\leq 1,\\
\\
0, &\mathrm{for}~|x|\geq2,
\end{cases}
\end{equation*}
and
$$\eta_\ep(x)=\left(1-\eta\left(\frac{2x}{\ep}\right)\right)\eta(\ep x).$$
It is not difficult to see that $\eta_\ep=1$ for $\ep<r<\ep^{-1}$ and $\eta_\ep=0$ for either $r<\frac{\ep}{2}$ or $r>\frac{2}{\ep}$. We test the stability condition \eqref{stability} on the function $\psi(x)=r^{-\frac{n-2s}{2}}\eta_\ep(r)$. Let $z=rt$ and note that
\begin{equation*}
\begin{aligned}
\int_{\rn}\frac{\psi(x)-\psi(z)}{|x-z|^{n+2s}}dz
&= r^{-\frac n2-s}\int_0^\infty\int_{\b}\frac{\eta_{\ep}(r)-t^{-\frac{n-2s}{2}}\eta_{\ep}(rt)}{(t^2+1-2t\langle \theta,\omega\rangle)^{\frac{n+2s}{2}}}t^{n-1}dtd\omega\\
&= r^{-\frac n2-s}\eta_{\ep}(r)\int_0^{\infty}\int_{\b}\frac{1-t^{-\frac{n-2s}{2}}}
{(t^2+1-2t\langle\theta,\omega\rangle)^{\frac{n+2s}{2}}}t^{n-1}dtd\omega\\
&\quad +r^{-\frac n2-s}\int_0^{\infty}\int_{\b}\frac{t^{n-1-\frac{n-2s}{2}}(\eta_{\ep}(r)-\eta_{\ep}(rt))}
{(t^2+1-2t\langle\theta,\omega\rangle)^{\frac{n+2s}{2}}}dtd\omega.
\end{aligned}
\end{equation*}
It is known that (see e.g.  \cite[Lemma 4.1]{fall})
\begin{equation*}
\Lambda_{n,s}=C_{n,s}\int_0^{\infty}\int_{\b}\frac{1-t^{-\frac{n-2s}{2}}}
{(t^2+1-2t\langle\theta,\omega\rangle)^{\frac{n+2s}{2}}}t^{n-1}dtd\omega.
\end{equation*}
Therefore,
\begin{equation*}
\begin{aligned}
C_{n,s}\int_{\rn}\frac{\psi(x)-\psi(z)}{|x-z|^{n+2s}}dz
=~&C_{n,s}r^{-\frac n2-s}\int_0^{\infty}\int_{\b}\frac{t^{n-1-\frac{n-2s}{2}}(\eta_{\ep}(r)-\eta_{\ep}(rt))}
{(t^2+1-2t\langle\theta,\omega\rangle)^{\frac{n+2s}{2}}}dtd\omega\\
&+\Lambda_{n,s}r^{-\frac n2-s}\eta_{\ep}(r).
\end{aligned}
\end{equation*}
Based on the above computations, we compute the left hand side of the stability inequality \eqref{stability},
\begin{equation}
\label{3.exp-1}
\begin{aligned}
&\frac{C_{n,s}}{2}\int_{\rn}\int_{\rn}\frac{(\psi(x)-\psi(z))^2}{|x-z|^{n+2s}}dxdz
=C_{n,s}\int_{\rn}\int_{\rn}\frac{(\psi(x)-\psi(z))\psi(x)}{|x-z|^{n+2s}}dxdz\\
&=C_{n,s}\int_0^\infty\left[\int_0^\infty r^{-1}\eta_{\ep}(r)(\eta_{\ep}(r)-\eta_{\ep}(rt))dr\right]\int_{\b}\int_{\b}\frac{t^{n-1-\frac{n-2s}{2}}}
{(t^2+1-2t\langle\theta,\omega\rangle)^{\frac{n+2s}{2}}}d\omega d\theta dt\\
&\quad+\Lambda_{n,s}|\b|\int_0^\infty r^{-1}\eta_{\ep}^2(r)dr.
\end{aligned}
\end{equation}
We compute the right hand side of  the stability inequality \eqref{stability} for the test function $\psi(x)=r^{-\frac{n}{2}+s}\eta_{\ep}(r)$ and $u(r)=-2s\log r+\tau(\theta)$,
\begin{equation}
\label{3.exp-2}
\begin{aligned}
\int_{\rn}e^u\psi^2
=\int_0^\infty\int_{\b}\eta_{\ep}^2(r)r^{-2s}r^{-(n-2s)}e^{\tau(\theta)}r^{n-1}drd\theta
=\int_0^\infty r^{-1}\eta^2_{\ep}(r)dr\int_{\b}e^{\tau(\theta)}d\theta.
\end{aligned}
\end{equation}
From the definition of the function $\eta_{\ep}$, we have
\begin{equation*}
\int_0^\infty r^{-1}\eta_{\ep}^2(r)dr=\log\frac{2}{\ep}+O(1).
\end{equation*}
One can see that both the second term on the right hand side of \eqref{3.exp-1} and the right hand side of \eqref{3.exp-2} carry the term $\int_0^\infty r^{-1}\eta_{\ep}^2(r)dr$ and it tends to $\infty$ as $\ep\to0$. Similar computations as in  \cite{hy,fw} yield 
\begin{equation}
\label{3.exp-3}
f_{\ep}(t):=\int_0^\infty r^{-1}\eta_{\ep}(r)(\eta_{\ep}(r)-\eta_{\ep}(rt))dr =O(\log t).
\end{equation}
From this one can see that
\begin{equation*}
\begin{aligned}
&\int_0^{\infty}\left[\int_0^\infty r^{-1}\eta_{\ep}(r)(\eta_{\ep}(r)-\eta_{\ep}(rt))\right]\int_{\b}\int_{\b}
\dfrac{t^{n-1-\frac{n-2s}{2}}}
{(t^2+1-2t\langle\theta,\omega\rangle)^{\frac{n+2s}{2}}}d\omega d\theta dt\\
&\approx \int_0^\infty\int_{\b}\int_{\b}\dfrac{t^{n-1-\frac{n-2s}{2}}\log t}
{(t^2+1-2t\langle\theta,\omega\rangle)^{\frac{n+2s}{2}}}d\omega d\theta dt = O(1).
\end{aligned}
\end{equation*}
Collecting the higher order term $(\log\ep)$, we get
\begin{equation}
\label{3.int2}
\Lambda_{n,s}|\b|\geq \int_{\b}e^{\tau(\theta)}d\theta.
\end{equation}
From \eqref{3.int1} and \eqref{3.int2}, we obtain that $\Lambda_{n,s}\geq A_{n,s},$ which contradicts to the assumption \eqref{1.condition}. Therefore, such homogeneous solution does not exist and we finish the proof.	
\end{proof}

\section{blow-down analysis}\label{secbd}
\subsection{Preliminary estimates}
In this section we use the stability condition outside the compact set to derive a integral representation formula for $u$.
\begin{lemma}
\label{le31.1}
Let $u$ be a solution to \eqref{main} for some $n>2s$. Suppose that $u$ is stable outside a compact set. Then
\begin{equation}
\label{31.1}
\int_{B_r}e^udx\leq Cr^{n-2s},\quad \forall~r\geq1.
\end{equation}
\end{lemma}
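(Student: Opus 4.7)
The plan is to test the stability inequality \eqref{stability} with a cut-off function supported at effective scale $r$ that keeps clear of the compact set $\overline{B_R}$ outside of which $u$ is stable. Fix once and for all a radial $\eta\in C_c^\infty(\rn)$ with $0\le\eta\le 1$, $\eta\equiv 1$ on $B_1$, and $\mathrm{supp}\,\eta\subset B_2$, together with a radial $\chi\in C^\infty(\rn)$ with $\chi\equiv 0$ on $B_R$ and $\chi\equiv 1$ outside $B_{2R}$. For $r\ge 2R$ set $\phi_r(x):=\eta(x/r)\chi(x)$. Then $\phi_r\in C_c^\infty(\rn\setminus\overline{B_R})$, $0\le\phi_r\le 1$, and $\phi_r\equiv 1$ on $B_r\setminus B_{2R}$, so $\phi_r$ is admissible in \eqref{stability} and
\begin{equation*}
\int_{B_r\setminus B_{2R}} e^u\,dx \;\le\; \int_{\rn} e^u\phi_r^2\,dx \;\le\; \int_{\rn}|\ss\phi_r|^2\,dx.
\end{equation*}

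Next I would show that $\int_{\rn}|\ss\phi_r|^2\,dx\le C\,r^{n-2s}$ for $r\ge\max(2R,1)$. Decompose $\phi_r=\eta(\cdot/r)-\eta(\cdot/r)(1-\chi)$. Using the Fourier identity $\int|\ss f|^2=\int|\xi|^{2s}|\hat f|^2$ and the scaling $\widehat{\eta(\cdot/r)}(\xi)=r^n\hat\eta(r\xi)$, a direct change of variable gives $\|\ss\eta(\cdot/r)\|_{L^2}^2=r^{n-2s}\|\ss\eta\|_{L^2}^2$. The corrector $\eta(\cdot/r)(1-\chi)$ is supported in the fixed ball $B_{2R}$, and in fact coincides on $B_{2R}$ with the $r$-independent function $1-\chi$ (because $\eta(x/r)\equiv 1$ on $B_{2R}$ whenever $r\ge 2R$); hence its $\dot H^s$-seminorm is bounded independently of $r$. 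Since $n>2s$, this $O(1)$ contribution is absorbed into the $r^{n-2s}$ term, giving the desired bound.

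Finally, since $u\in C^{2\alpha}(\rn)$ is locally bounded, $\int_{B_{2R}}e^u\,dx\le C$ is a pure constant; combining this with the previous estimate yields $\int_{B_r}e^u\,dx\le C\,r^{n-2s}$ for all $r\ge\max(2R,1)$, and the remaining range $1\le r\le 2R$ is trivial after enlarging $C$. The main technical obstacle is the $\dot H^s$-seminorm estimate for $\phi_r$ when $s\in(1,2)$: the symmetric double-integral form of the seminorm is not absolutely convergent for a generic smooth function in this range, so the computation is cleanest via the Fourier/extension formulation as above. Alternatively, one can argue entirely inside the Caffarelli-Silvestre-Yang extension of \eqref{main}, estimating the weighted Dirichlet-type energy of the extension of $\phi_r$, which has the same $r^{n-2s}$ scaling by an analogous rescaling of the extended test function.
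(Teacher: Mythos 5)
Your proof is correct and follows essentially the same route as the paper: test the stability inequality with a product of a fixed cut-off $\chi$ vanishing near $\overline{B_R}$ and a rescaled cut-off $\eta(\cdot/r)$, observe that $\|\ss\phi_r\|_{L^2}^2$ scales like $r^{n-2s}$, and absorb the bounded contribution from the fixed compact set. The only difference is that you supply the Fourier-side justification of the seminorm bound (which the paper asserts without detail), and this added detail is sound.
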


\begin{proof}
Let $R\gg1$ be such that $u$ is stable on $\rn\setminus B_R.$ We fix two smooth cut-off functions $\eta_R$ and $\varphi$ in $\rn$ such that
\begin{equation*}
\eta_R(x)=\begin{cases}
0\quad &\mathrm{for}~|x|\leq R,\\
1\quad &\mathrm{for}~|x|\geq 2R,
\end{cases}\qquad
\varphi(x)=\begin{cases}
1\quad &\mathrm{for}~|x|\leq r,\\
0\quad &\mathrm{for}~|x|\geq 2r.
\end{cases}
\end{equation*}
Setting $\psi(x)=\eta_R(x)\varphi(\frac{x}{r})$ with $r\geq1$ we see that $\psi$ is a good test function for stability condition \eqref{stability}. Hence
\begin{equation*}
\int_{B_r}e^udx\leq C+\int_{\rn}|\ss\psi|^2dx\leq Cr^{n-2s},
\end{equation*}
where we used that $n>2s$ and $r\geq1.$
\end{proof}

In the entire paper, we assume that $u$ is stable outside a compact set. We notice that
$$u^\lambda(x)=u(\l x)+2s\log\l$$
provides a family of solutions to \eqref{main}. In addition, $u$ is stable outside a compact set if and only if $u^\l$ is stable outside a compact set. A direct consequence of Lemma \ref{le31.1} is the following $L^1$ estimate.
\begin{corollary}
\label{cr31.1}	
Suppose $n>2s$ and $u$ is a solution of \eqref{main} which is stable outside a compact set. Then there exists $C$ such that
\begin{equation*}
\int_{B_r}e^{u^\l}dx\leq Cr^{n-2s},\quad \forall \l\geq1,~r\geq1.
\end{equation*}
\end{corollary}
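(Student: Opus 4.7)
The plan is to reduce directly to Lemma \ref{le31.1} by a change of variables, exploiting the scaling invariance of \eqref{main}. One could alternatively try to reprove Lemma \ref{le31.1} with $u$ replaced by $u^\l$, since $u^\l$ is again a solution of \eqref{main} that is stable outside a compact set (as the paragraph preceding the corollary observes); but one would then have to track how the radius of instability transforms under rescaling in order to obtain a constant uniform in $\l$, and this is an unnecessary detour.

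Concretely, I would substitute $y = \l x$ in the integral $\int_{B_r} e^{u^\l(x)}\,dx$. Using $u^\l(x) = u(\l x) + 2s\log\l$, the factor $\l^{2s}$ coming from the exponential combines with the Jacobian $\l^{-n}\,dy$ to give
\[
\int_{B_r} e^{u^\l(x)}\,dx \;=\; \l^{2s-n}\int_{B_{\l r}} e^{u(y)}\,dy.
\]
Since $\l r \geq 1$ whenever $\l \geq 1$ and $r \geq 1$, Lemma \ref{le31.1} applies to the right-hand side and produces a bound of the form $C(\l r)^{n-2s}$. The remaining factors $\l^{2s-n}$ and $\l^{n-2s}$ cancel exactly, yielding $Cr^{n-2s}$ with the same constant $C$ as in Lemma \ref{le31.1}.

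There is no serious obstacle. The only point to verify is that no hidden $\l$-dependence sneaks into the constant, which is guaranteed by the critical homogeneity of the problem: the scaling $u \mapsto u^\l$ is designed precisely so that \eqref{main} is preserved, and correspondingly the functional $r \mapsto r^{2s-n}\int_{B_r} e^u\,dx$ is scale invariant. The corollary is therefore essentially a scale-invariant restatement of Lemma \ref{le31.1}, which is the form needed to feed into the blow-down analysis performed in the remainder of Section \ref{secbd}.
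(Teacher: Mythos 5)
Your change-of-variables argument is exactly the intended one: the paper presents the corollary as "a direct consequence of Lemma \ref{le31.1}," and the substitution $y=\l x$ with the cancellation of $\l^{2s-n}$ against $(\l r)^{n-2s}$ is the computation behind that remark. The proposal is correct and matches the paper's approach.
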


With the above lemma in hand, we show that $e^{u^\l}\in L_{\mu}(\rn)$ for some $\mu<0$.

\begin{lemma}
\label{le31.2}
For $\delta>0$ there exists $C=C(\delta)>0$ such that	
\begin{equation}
\int_{\rn}\frac{e^{u^\l}}{1+|x|^{n-2s+\delta}}dx\leq C,\quad \forall \l\geq1.
\end{equation}
\end{lemma}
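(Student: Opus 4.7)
The plan is to combine the uniform bound from Corollary \ref{cr31.1} with a dyadic decomposition of $\mathbb{R}^n$. The extra exponent $\delta>0$ in the denominator provides a gain of $|x|^{-\delta}$ over the natural growth rate $r^{n-2s}$ of $\int_{B_r} e^{u^\lambda}\,dx$, which is exactly what is needed to make a geometric series converge.

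Concretely, I would split
\begin{equation*}
\int_{\mathbb{R}^n} \frac{e^{u^\lambda}}{1+|x|^{n-2s+\delta}}\, dx = \int_{B_1} \frac{e^{u^\lambda}}{1+|x|^{n-2s+\delta}}\, dx + \sum_{k=0}^{\infty} \int_{B_{2^{k+1}}\setminus B_{2^k}} \frac{e^{u^\lambda}}{1+|x|^{n-2s+\delta}}\, dx.
\end{equation*}
The integral over $B_1$ is controlled by $\int_{B_1} e^{u^\lambda}\,dx \leq C$, which is exactly the $r=1$ case of Corollary \ref{cr31.1}. For each dyadic annulus, the weight satisfies $1+|x|^{n-2s+\delta} \geq 2^{k(n-2s+\delta)}$ pointwise, so
\begin{equation*}
\int_{B_{2^{k+1}}\setminus B_{2^k}} \frac{e^{u^\lambda}}{1+|x|^{n-2s+\delta}}\, dx \leq 2^{-k(n-2s+\delta)} \int_{B_{2^{k+1}}} e^{u^\lambda}\, dx \leq C\, 2^{-k(n-2s+\delta)} (2^{k+1})^{n-2s} \leq C\, 2^{-k\delta},
\end{equation*}
where in the last step I applied Corollary \ref{cr31.1} with $r=2^{k+1}\geq 1$ (using that $\lambda\geq 1$), and the constant $C$ does not depend on $\lambda$ or $k$.

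Summing the geometric series $\sum_{k=0}^\infty 2^{-k\delta} = (1-2^{-\delta})^{-1}$ gives a bound of the form $C(\delta)$, uniform in $\lambda\geq 1$, which is the desired estimate. There is no real obstacle here: the lemma is essentially a bookkeeping consequence of the uniform volume bound $\int_{B_r} e^{u^\lambda}\,dx \lesssim r^{n-2s}$, and the role of $\delta>0$ is solely to provide summability of the dyadic tails.
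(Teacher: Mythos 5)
Your proof is correct and follows essentially the same route as the paper: a dyadic decomposition of $\mathbb{R}^n$ combined with the uniform bound $\int_{B_r}e^{u^\lambda}\,dx\leq Cr^{n-2s}$ from Corollary \ref{cr31.1}, with the exponent $\delta>0$ supplying the geometric decay $2^{-k\delta}$ needed for summability.
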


\begin{proof}
With Corollary \eqref{cr31.1}, we have	
\begin{equation*}
\int_{\rn}\frac{e^{u^\l}}{1+|x|^{n-2s+\delta}}dx
\leq C\int_{B_1}e^{u^\l}dx+
\sum_{i=0}^\infty\int_{2^i\leq|x|<2^{i+1}}\frac{e^{u^\l}}{1+|x|^{n-2s+\delta}}dx\leq C+C\sum_{i=0}^\infty\frac{1}{2^{i\delta}}<+\infty.
\end{equation*}	
Hence we finish the proof.	
\end{proof}
Set
\begin{equation*}
w^\l(x):=c(n,s)\int_{\rn}\left(\frac{1}{|x-z|^{n-2s}}-\frac{1}{(1+|z|)^{n-2s}}\right)e^{u^\l(z)}dz,
\end{equation*}
where $c(n,s)$ is chosen such that
\begin{equation*}
c(n,s)(-\Delta)^s\frac{1}{|x-z|^{n-2s}}=\delta(x-z).
\end{equation*}
It is straightforward to see that $w^\l\in L_{\mathrm{loc}}^1(\rn)$. Here we prove estimates for $w^\l$.

\begin{lemma}
\label{le31.3}
We have
\begin{equation}
\label{31.4}
\int_{\rn}\frac{|w^\l(x)|}{1+|x|^{n+2s}}dx\leq C,\quad \forall \l\geq1.
\end{equation}
Moreover, for every $R>0$ we have
\begin{equation}
\label{31.5}
w^\l(x)=c(n,s)\int_{B_{2R}}\frac{1}{|x-z|^{n-2s}}e^{u^\l(z)}dz+O_R(1)\quad \mbox{for}~\l\geq 1,~|x|\leq R.
\end{equation}
\end{lemma}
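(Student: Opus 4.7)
The plan is to prove both estimates by writing $w^\l$ as an integral against $e^{u^\l(z)}\,dz$ and then reducing matters to the a priori weighted bound from Lemma \ref{le31.2}. The single analytic input needed for both (i) and (ii) is a pointwise bound on the kernel difference
\[
K(x,z):=\left|\frac{1}{|x-z|^{n-2s}}-\frac{1}{(1+|z|)^{n-2s}}\right|,
\]
coming from the mean-value theorem: whenever $1+|z|\ge 2|x|+2$, both $|x-z|$ and $1+|z|$ are comparable to $|z|$ and differ by at most $|x|+1$, so
\[
K(x,z)\le \frac{C(|x|+1)}{(1+|z|)^{n-2s+1}}.
\]
This is the only nontrivial ingredient; everywhere else one just uses the triangle inequality $K(x,z)\le |x-z|^{2s-n}+(1+|z|)^{2s-n}$.

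For \eqref{31.5}, I would split the defining integral over $B_{2R}$ and its complement. The $B_{2R}$ piece produces the stated main term plus the integral $\int_{B_{2R}}(1+|z|)^{2s-n}e^{u^\l}\,dz$, which is $O_R(1)$ by Corollary \ref{cr31.1}. On $\{|z|>2R\}$ and $|x|\le R$ we are in the regime $1+|z|\ge 2|x|+2$, so the kernel bound above yields
\[
\left|\int_{B_{2R}^c}\Bigl(\tfrac{1}{|x-z|^{n-2s}}-\tfrac{1}{(1+|z|)^{n-2s}}\Bigr)e^{u^\l(z)}\,dz\right|\le C_R\int_{\rn}\frac{e^{u^\l(z)}}{1+|z|^{n-2s+1}}\,dz\le C_R
\]
by Lemma \ref{le31.2} applied with $\delta=1$. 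This gives \eqref{31.5}.

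For \eqref{31.4}, I would swap the order of integration by Fubini and estimate
\[
\int_{\rn}\frac{|w^\l(x)|}{1+|x|^{n+2s}}\,dx\le c(n,s)\int_{\rn}\!\mathcal K(z)\,e^{u^\l(z)}\,dz,\qquad \mathcal K(z):=\int_{\rn}\frac{K(x,z)}{1+|x|^{n+2s}}\,dx,
\]
and show $\mathcal K(z)\le C(1+|z|)^{-(n-2s+1)}$; then Lemma \ref{le31.2} with $\delta=1$ finishes the proof. For $|z|\le 1$ the bound $\mathcal K(z)\le C$ is immediate because $|x-z|^{2s-n}$ is locally integrable and the weight $(1+|x|^{n+2s})^{-1}$ kills the tail. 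For $|z|\ge 2$ I would split the $x$-integration into three regions. On $\{|x|\le |z|/4\}$ the mean-value bound applies and gives a contribution $\lesssim |z|^{-(n-2s+1)}\int(1+|x|)(1+|x|^{n+2s})^{-1}dx\lesssim |z|^{-(n-2s+1)}$. On $\{|z|/4<|x|\le 2|z|\}$ the weight is $\lesssim |z|^{-(n+2s)}$ and $\int_{B_{3|z|}(z)}|x-z|^{2s-n}dx\lesssim |z|^{2s}$, contributing $\lesssim |z|^{-n}$; the subtracted term $(1+|z|)^{2s-n}$ gives the same order. On $\{|x|>2|z|\}$ we have $|x-z|\ge |x|/2$, so the integrand is $\lesssim |x|^{-2n+2s}\cdot|x|^{2s-n}$ which integrates to $O(|z|^{-n})$.

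The main obstacle is the careful verification of the kernel estimate $\mathcal K(z)\lesssim (1+|z|)^{-(n-2s+1)}$ in the regime where $|x|$ and $|z|$ are comparable, where one cannot use the mean-value refinement; as sketched above this is handled by direct estimation after a suitable decomposition of $x$-space. Once this is in place, the remaining arguments reduce to straightforward applications of Lemma \ref{le31.2} and Corollary \ref{cr31.1}.
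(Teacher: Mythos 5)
Your proposal is correct and follows essentially the same route as the paper: Fubini, the pointwise kernel bound $K(x,z)\lesssim (1+|x|)\,|z|^{-(n-2s+1)}$ from the mean value theorem in the region where $|z|$ dominates, a decomposition of $x$-space into near-origin, comparable, and far regions to prove $\mathcal K(z)\lesssim |z|^{-(n-2s+1)}$ (the paper's $E(z)$), and a final appeal to Lemma \ref{le31.2} with $\delta=1$ and to Corollary \ref{cr31.1} for the local piece. The only blemish is a harmless typo in the exponent for the region $\{|x|>2|z|\}$ (the integrand there is $\lesssim |x|^{-2n}+|z|^{2s-n}|x|^{-n-2s}$, which indeed yields the $O(|z|^{-n})$ you claim).
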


\begin{proof}
Let us first set
$$f(x,z)=\left(\frac{1}{|x-z|^{n-2s}}-\frac{1}{(1+|z|)^{n-2s}}\right),$$
and estimate the term
$$E(z):=\int_{\B}\frac{|f(x,z)|}{1+|x|^{n+2s}}dx\quad\mbox{for}\quad |z|\geq 2.$$
We now apply a domain decomposition argument and split $\B$ into
$\B=\bigcup_{i=1}^4A_i,$
where
\begin{equation*}
A_1:=B_{|z|/2},~ A_2:=\B\setminus B_{2|z|},~
A_3:=B(z,|z|/2),~A_4:=\B\setminus (A_1\cup A_2\cup A_3).
\end{equation*}
Then,  it holds that
\begin{equation*}
|f(x,y)|\leq C
\begin{cases}
\dfrac{1+|x|}{|z|^{n-2s+1}} \quad &\mbox{if}~x\in A_1,\\
\dfrac{1}{|z|^{n-2s}}  \quad &\mbox{if}~x\in A_2\cup A_4,\\
\dfrac{1}{|z|^{n-2s}}+\dfrac{1}{|x-z|^{n-2s}} \quad &\mbox{if}~x\in A_3.
\end{cases}
\end{equation*}
We write
\begin{equation*}
E(z)=\sum_{i=1}^4 E_i(z),\quad E_i(z)=\int_{A_i}\dfrac{|f(x,z)|}{1+|x|^{n+2s}}dx.
\end{equation*}	
For $E_1(z)$, one has
\begin{equation*}
|E_1(z)|\leq\dfrac{C}{|z|^{n-2s+1}}\int_{A_1}\dfrac{1+|x|}{1+|x|^{n+2s}}dx\leq \dfrac{C}{|z|^{n-2s+1}}.
\end{equation*}
Next, we bound the second  and fourth terms
\begin{equation*}
|E_2(z)|+|E_4(z)|\leq\frac{C}{|z|^{n-2s}}\int_{\B\setminus A_1}\frac{1}{1+|x|^{n+2s}}dx\leq \frac{C}{|z|^n}.
\end{equation*}
While for the last term $E_3(z)$, we notice that $|x|\sim |z|$ for $x\in A_3$.  Therefore,
\begin{equation*}
\begin{aligned}
|E_3(z)|\leq \frac{C}{|z|^n}+\frac{C}{|z|^{n+2s}}
\int_{A_3}\frac{1}{|x-z|^{n-2s}}dx\leq\frac{C}{|z|^n}+\frac{C}{|z|^{n+2s}}
\int_{B_{3|z|}}\frac{1}{|x|^{n-2s}}dx\leq\frac{C}{|z|^n}.
\end{aligned}
\end{equation*}
Thus,
\begin{equation*}
|E(z)|\leq\dfrac{C}{|z|^{n-2s+1}}\quad \mbox{for}\quad |z|\geq 2.
\end{equation*}
Therefore, by Lemma \ref{le31.2}   we get that
\begin{equation*}
\begin{aligned}
\int_{\B}\dfrac{|w^\l(x)|}{1+|x|^{n+2s}}dx\leq~&
C\int_{\mathbb{R}^n\setminus B_2}e^{u^\l(z)}|E(z)|dz
+C\int_{\B}\int_{B_2}\frac{e^{u^\l(z)}}{|x-z|^{n-2s}(1+|x|^{n+2s})}dzdx\\
&+C\int_{\B}\int_{B_2}\frac{e^{u^\l(z)}}{((1+|z|)^{n-2s})(1+|x|^{n+2s})}dzdx
<+\infty.
\end{aligned}
\end{equation*}	
This finishes the proof of \eqref{31.4}. To prove \eqref{31.5} we notice that
\begin{equation*}
\left|\frac{1}{|x-z|^{n-2s}}-\frac{1}{(1+|z|)^{n-2s}}\right|\leq C\frac{1+|x|}{|z|^{n-2s+1}} \quad\text{for }|x|\leq R,\, |z|\geq 2R.
\end{equation*}
Then
\begin{equation*}
\begin{aligned}
\int_{|z|\geq 2R}\left|\frac{1}{|x-z|^{n-2s}}-\frac{1}{(1+|z|)^{n-2s}}\right|e^{u^\l(z)}dz
\leq C(1+|x|)\int_{|z|\geq2R}\frac{e^{u^\l(z)}}{|z|^{n-2s+1}}dz
\leq C(1+|x|),
\end{aligned}
\end{equation*}
where we used Lemma \ref{le31.2} in the last inequality. Then,  \eqref{31.5} follows immediately.
\end{proof}

\begin{lemma}
\label{le31.5}
We have
\begin{equation}
\label{31.7}
u^\l(x)=c(n,s)\int_{\B}\left(\frac{1}{|x-z|^{n-2s}}-\frac{1}{(1+| z|)^{n-2s}}\right)e^{u^\l(z)}dz+c_\l,
\end{equation}
where $c_\l\in\mathbb{R}$.
\end{lemma}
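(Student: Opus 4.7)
The plan is to set $h^\lambda(x):=u^\lambda(x)-w^\lambda(x)$, where $w^\lambda$ denotes the integral on the right-hand side of \eqref{31.7}, and argue that $h^\lambda\equiv c_\l$ for some $c_\l\in\R$. The argument has three phases: verifying $s$-harmonicity of $h^\lambda$ in the distributional sense, recording appropriate weighted growth bounds, and invoking a fractional Liouville-type principle.

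First I would check, distributionally on $\rn$, that $(-\Delta)^s h^\lambda=0$. For any $\varphi\in C_c^\infty(\rn)$, equation \eqref{main} applied to $u^\lambda$ gives $\langle u^\lambda,(-\Delta)^s\varphi\rangle=\langle e^{u^\lambda},\varphi\rangle$. For $w^\lambda$ I would swap the order of integration, justified by Lemma \ref{le31.3} together with the rapid decay of $(-\Delta)^s\varphi$, and use two identities: the normalization $c(n,s)(-\Delta)^s|x-z|^{-(n-2s)}=\delta_z$, and $\int_\rn(-\Delta)^s\varphi\,dx=\widehat{(-\Delta)^s\varphi}(0)=0$. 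The second identity cancels the contribution from the $x$-independent subtracted kernel $(1+|z|)^{-(n-2s)}$, yielding $\langle w^\lambda,(-\Delta)^s\varphi\rangle=\langle e^{u^\lambda},\varphi\rangle$. Subtracting gives $(-\Delta)^s h^\lambda=0$ distributionally, and standard interior regularity then upgrades $h^\lambda$ to a smooth classical $s$-harmonic function on $\rn$.

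Next I would collect growth information. The standing hypothesis $u\in L_{s/2}(\rn)$ and a direct change of variables give $u^\lambda\in L_{s/2}(\rn)$ for every fixed $\l\ge 1$, while \eqref{31.4} controls $w^\lambda$ in the weight $(1+|x|)^{n+2s}$. These place $h^\lambda$ in an appropriate weighted $L^1$ class, and a fractional Liouville theorem then asserts that any such $s$-harmonic distribution on $\rn$ is a polynomial of degree at most $\lfloor 2s\rfloor$. The finer $L_{s/2}$ integrability on $u^\lambda$, combined with a sharper pointwise decay bound for $w^\lambda$ at infinity extracted from the proof of Lemma \ref{le31.3}, then rules out all nonconstant polynomial contributions, forcing $h^\lambda\equiv c_\l$, which is precisely \eqref{31.7}.

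The main obstacle is this final ruling-out step. In the range $0<s<1$ treated by Hyder-Yang, $L_{s/2}$ admits only constants and Liouville concludes immediately; for $1<s<2$, however, the naive fractional Liouville principle tolerates polynomials up to degree three, so one must carefully exploit both the finer $L_{s/2}$ integrability and the precise pointwise decay of $w^\lambda$. An alternative, should the direct Liouville argument prove cumbersome, is to lift the problem to the upper half-space through Yang's extension and apply a Liouville theorem for bi-$\Delta_b$-harmonic functions with the weighted Neumann conditions $\lim_{y\to 0}y^b\partial_y H_e=\lim_{y\to 0}y^b\partial_y\Delta_b H_e=0$ that are inherited by the extension of $h^\lambda$.
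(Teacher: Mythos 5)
Your opening moves coincide with the paper's: the same decomposition $h^\lambda=u^\lambda-w^\lambda$, the distributional verification that $(-\Delta)^sh^\lambda=0$, and the reduction (via a Liouville-type classification, the paper cites Hyder's Lemma 2.4) to $h^\lambda$ being a polynomial of degree at most $3$. The gap is in your final step, and it is not cosmetic. For $1<s<2$ a linear function $\vec d\cdot x$ \emph{does} belong to $L_{s/2}(\rn)$, since $\int_{\rn}|z|(1+|z|)^{-n-s}dz<\infty$ exactly when $s>1$, and it is annihilated by $(-\Delta)^s$; so no combination of $L_{s/2}$-integrability of $u^\lambda$ with decay of $w^\lambda$ can exclude the degree-one part of $h^\lambda$. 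Moreover, the ``sharper pointwise decay bound for $w^\lambda$'' you propose to extract from Lemma \ref{le31.3} is not available there: that lemma yields only a weighted $L^1$ bound and a local representation, and at this stage the paper proves only the one-sided bound $w^\lambda(x)\ge -C\log|x|$ (established inside the proof of Lemma \ref{le31.5} itself from the volume bound \eqref{31.1}); a pointwise upper bound on $w^\lambda$ only appears much later, in Lemma \ref{le32.5}, after the $L^p$ estimates.

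The paper closes the argument with two ingredients absent from your proposal. First, the bound $\int_{B_r}e^{u^\lambda}dx\le Cr^{n-2s}$ of Lemma \ref{le31.1}, combined with $w^\lambda\ge-C\log|x|$, shows that $u^\lambda$ cannot grow linearly (or faster) from below on any cone, since otherwise $e^{u^\lambda}$ would grow exponentially there; this kills the cubic part, forces $h^\lambda=c_\lambda+\vec d_\lambda\cdot x+e_\lambda|x|^2$ with $e_\lambda\le0$, and is what ultimately eliminates $\vec d_\lambda$. Second, the standing hypothesis of Theorem \ref{main-th} that $\ss u$ vanishes at infinity is invoked precisely here: $(-\Delta)^{s/2}\bigl(e_\lambda|x|^2\bigr)$ grows like $|x|^{2-s}$, so $e_\lambda=0$. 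Your argument never appeals to either the exponential-growth mechanism or the vanishing hypothesis (neither in the direct route nor in the extension alternative), and since the linear term is invisible to every tool you do invoke, the proof as proposed cannot be completed.
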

\begin{proof}
According to the definition of $w^\l$, we can easily see that $h^\lambda:=u^\l-w^\l$ is a $s$-harmonic function in $\B$, i.e., $\s h^\l=0$. In the spirit of   \cite[Lemma 2.4]{ha} one can show that   $h^\l $ is polynomial up to degree 3 at most. In order to prove \eqref{31.7},   we first show
\begin{equation}
\label{31.8}
w^\l(x)\geq -C\log|x|\quad\mbox{for}~|x|~\mbox{large},
\end{equation}
for some constant $C>0$. Indeed,  by \eqref{31.1} we have
\begin{align*}
w^\l(x)\geq -C-C\sum_{i=1}^{[\log(2|x|)]}\int_{2^i\leq |z|\leq 2^{i+1}}\frac{1}{(1+|z|)^{n-2s}}e^{u^\l(z)}dz\geq -C- C\log|x|.
\end{align*}
where $[x]$ denotes the integer part of $x$. Hence, \eqref{31.8} is proved. Therefore
$$u^\l(x)\geq -C\log|x| +h^\l(x).$$
By \eqref{31.1} again, $h^\l  = c_\l+\vec{d}_\l\cdot x+e_\l|x|^2$ for some constants $c_\l\in\mathbb{R}$, $\vec{d}_\l\in\mathbb{R}^n$ and $e_\l<0$. Using the assumption $\ss u(x)$ vanishes at infinity, we conclude that $\vec{d}_\l$ and $e_\l$ vanish. Hence we finish the proof.
\end{proof}


As discussed in the introduction, $v(x)=\ss u(x)$. Let $v^\l(x)=\ss u^\l(x)$ and it is easy to see that $v^\l(x)=\l^sv(\l x).$ Given this and Lemma \ref{le31.3}, we are able to prove the following estimate on $v^\l$.

\begin{lemma}
\label{le31.6}
We have
\begin{equation*}
\int_{\rn} \dfrac{|v^\l(x)|}{1+|x|^{n+s}}dx\leq C,\quad \forall~\l\geq1.
\end{equation*}
\end{lemma}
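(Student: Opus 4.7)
The plan is to apply $\ss$ to the integral representation from Lemma~\ref{le31.5}, identify its action on the Riesz kernel $|\cdot|^{-(n-2s)}$, and close with a domain-decomposition argument modeled on the one used in Lemma~\ref{le31.3}.

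First, differentiating the identity
\begin{equation*}
u^\l(x)=c(n,s)\int_{\B}\left(\frac{1}{|x-z|^{n-2s}}-\frac{1}{(1+|z|)^{n-2s}}\right)e^{u^\l(z)}\,dz+c_\l
\end{equation*}
by $\ss$, both the additive constant $c_\l$ and the $x$-independent piece $-(1+|z|)^{-(n-2s)}e^{u^\l(z)}$ are annihilated. Since $\s=\ss\circ\ss$ and $c(n,s)|y|^{-(n-2s)}$ is the fundamental solution of $\s$, the semigroup property gives
\begin{equation*}
\ss\!\left(\frac{c(n,s)}{|x-z|^{n-2s}}\right)=\frac{k(n,s)}{|x-z|^{n-s}},\qquad k(n,s)>0,
\end{equation*}
since $k(n,s)|\cdot|^{-(n-s)}$ is the fundamental solution of $\ss$. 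Substituting then yields
\begin{equation*}
v^\l(x)=\widetilde c(n,s)\int_{\B}\frac{e^{u^\l(z)}}{|x-z|^{n-s}}\,dz,\qquad \widetilde c(n,s)>0.
\end{equation*}
In particular $v^\l\ge 0$, so $|v^\l|=v^\l$ and the required estimate reduces to a Fubini--Tonelli argument applied to a positive integrand.

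Next I would bound, uniformly in $z\in\B$, the kernel integral
\begin{equation*}
\widetilde E(z):=\int_{\B}\frac{dx}{(1+|x|^{n+s})\,|x-z|^{n-s}}.
\end{equation*}
For $|z|\le 2$ this is manifestly $O(1)$. For $|z|\ge 2$ I would decompose $\B$ exactly as in Lemma~\ref{le31.3}, namely into $A_1=B_{|z|/2}$, $A_2=\B\setminus B_{2|z|}$, $A_3=B(z,|z|/2)$, and $A_4=\B\setminus(A_1\cup A_2\cup A_3)$. On $A_1$ the factor $|x-z|^{-(n-s)}$ is comparable to $|z|^{-(n-s)}$, giving the dominant contribution $O(|z|^{-(n-s)})$. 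On $A_2$, $|x-z|\sim|x|\ge|z|$ gives $O(|z|^{-n})$. On $A_3$, $|x|\sim|z|$ allows pulling $|z|^{-(n+s)}$ outside, and $\int_{B(z,|z|/2)}|x-z|^{-(n-s)}dx=C|z|^s$ yields again $O(|z|^{-n})$. Finally $A_4$ contributes $O(|z|^{-n})$ since both $|x|$ and $|x-z|$ are comparable to $|z|$ there. Summing,
\begin{equation*}
\widetilde E(z)\le \frac{C}{1+|z|^{n-s}}.
\end{equation*}

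Plugging this back and swapping the order of integration gives
\begin{equation*}
\int_{\B}\frac{|v^\l(x)|}{1+|x|^{n+s}}\,dx\le \widetilde c(n,s)\int_{\B}\frac{e^{u^\l(z)}}{1+|z|^{n-s}}\,dz,
\end{equation*}
and the right-hand side is bounded uniformly in $\l\ge 1$ by Lemma~\ref{le31.2} applied with $\delta=s>0$ (note $n-s=(n-2s)+s$). The one delicate step is the first one: justifying the commutation of $\ss$ with the integral and identifying its effect on the Riesz kernel, in particular confirming that the $x$-independent pieces genuinely vanish under $\ss$. This is legitimate because the decay supplied by Lemma~\ref{le31.2} keeps $u^\l$ in $L_{s/2}(\B)$ uniformly in $\l$, so that $\ss u^\l$ is an honest $L^1_{\mathrm{loc}}$ function representable via the Riesz convolution of $e^{u^\l}$ with $|\cdot|^{-(n-s)}$.
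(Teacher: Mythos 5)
Your argument is correct and follows essentially the same route as the paper: both derive the representation $v^\l(x)=C\int_{\rn}|x-z|^{s-n}e^{u^\l(z)}\,dz$ from Lemma \ref{le31.5} and the vanishing of $\ss u$ at infinity, bound the kernel integral $\int_{\rn}(1+|x|^{n+s})^{-1}|x-z|^{s-n}dx$ by $C|z|^{s-n}$ for $|z|\geq 2$, and conclude via Tonelli together with Lemma \ref{le31.2}. The only cosmetic differences are that you use the four-region decomposition of Lemma \ref{le31.3} where the paper splits simply into $B(z,|z|/2)$ and its complement, and you absorb the $|z|\leq 2$ contribution into a uniform kernel bound rather than treating it as a separate term.
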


\begin{proof}
Using Lemma \ref{le31.5} and the assumption $\ss u(x)\to0$ as $x\to+\infty$, we see that
\begin{equation*}
v^\l(x)=C\int_{\rn}\frac{1}{|x-z|^{n-s}}e^{u^\l(z)}dz.
\end{equation*}
Let us set
\begin{equation*}
F(z)=\int_{\rn}\dfrac{1}{|x-z|^{n-s}(1+|x|^{n+s})}dx\quad\mathrm{for}\quad|z|\geq2.
\end{equation*}
We split $\rn$ into
$$\rn= B(z,\frac{|z|}{2})\bigcup \left(\rn\setminus B(z,\frac{|z|}{2})\right).$$
Then, it is easy to check that
\begin{equation*}
\int_{B(z,\frac{|z|}{2})}\dfrac{1}{|x-z|^{n-s}(1+|x|^{n+s})}dx\leq \frac{C}{|z|^n},
\quad\mathrm{and}\quad
\int_{\rn\setminus B(z,\frac{|z|}{2})}\dfrac{1}{|x-z|^{n-s}(1+|x|^{n+s})}dx\leq \frac{C}{|z|^{n-s}}.
\end{equation*}
Thus we have
$$|F(z)|\leq \frac{C}{|z|^{n-s}}\quad \mathrm{for}\quad |z|\geq2.$$
From Lemma \ref{le31.2},  we conclude that
\begin{equation*}
\begin{aligned}
\int_{\rn}\dfrac{|v^\l(x)|}{1+|x|^{n+s}}dx\leq~&
C\int_{\mathbb{R}^n\setminus B_2}e^{u^\l(z)}|F(z)|dz
+C\int_{\rn}\int_{B_2}\frac{e^{u^\l(z)}}{|x-z|^{n-s}(1+|x|^{n+s})}dzdx\\
\leq~&C+ C\int_{B_2}e^{u^\l(z)}dz<\infty.
\end{aligned}
\end{equation*}
This finishes the proof.	
\end{proof}

\subsection{A bootstrap argument}
The aim of this subsection is devoted to of deriving the higher order energy estimate, which will be carried out through a bootstrap argument. The main idea of the argument is known as the Moser iteration type arguments initiated by Crandall and Rabinowitz in \cite{CR} and adapted in \cite{dggw}.

\begin{lemma}
\label{le32.1}
Let the stability inequality \eqref{stability} holds. Then, for $t\in[0,1],$
\begin{equation*}
\int_{\mathbb{R}^n}e^{tu}\varphi^2dx\leq\int_{\rn}|(-\Delta)^{\frac{st}{4}}\varphi|^2dx,\quad \forall~\varphi\in C_c^\infty(\rn).
\end{equation*}
In particular, for $t=\frac12,$
\begin{equation}
\label{half-stability}
\int_{\mathbb{R}^n}e^{\frac12u}\varphi^2dx\leq\int_{\rn}|(-\Delta)^{\frac{s}{4}}\varphi|^2dx,\quad \forall~\varphi\in C_c^\infty(\rn).
\end{equation}
\end{lemma}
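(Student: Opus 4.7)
My strategy is to recognize the inequality as an instance of the Löwner--Heinz operator monotonicity theorem applied to the stability condition \eqref{stability}. Reading \eqref{stability} as a quadratic-form identity on $L^2(\mathbb R^n)$, it says exactly that $M_{e^u}\le(-\Delta)^s$ as positive self-adjoint operators, where $M_{e^u}$ denotes multiplication by $e^u$: indeed, for $\phi\in C_c^\infty(\mathbb R^n)$,
$$\langle M_{e^u}\phi,\phi\rangle = \int_{\mathbb R^n} e^u\phi^2\, dx \le \int_{\mathbb R^n} |(-\Delta)^{s/2}\phi|^2\, dx = \langle (-\Delta)^s\phi,\phi\rangle.$$
This is the $t=1$ endpoint of the lemma, and the $t=0$ endpoint is the trivial identity $I=I$.

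The classical Löwner--Heinz theorem asserts that for positive self-adjoint operators $0\le A\le B$ on a Hilbert space one has $A^t\le B^t$ for every $t\in[0,1]$. I would apply it with $A=M_{e^u}$ and $B=(-\Delta)^s$; the functional-calculus identities $(M_{e^u})^t=M_{e^{tu}}$ and $((-\Delta)^s)^t=(-\Delta)^{st}$ then yield $M_{e^{tu}}\le(-\Delta)^{st}$ as quadratic forms. Evaluating on $\varphi\in C_c^\infty(\mathbb R^n)$ and using Parseval, $\langle (-\Delta)^{st}\varphi,\varphi\rangle=\int|(-\Delta)^{st/2}\varphi|^2\, dx$, delivers the desired estimate. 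The case $t=\tfrac12$ in \eqref{half-stability} is the classical Löwner square-root inequality applied to the stability condition.

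The main technical subtlety is that $u$ need not be bounded above, so $M_{e^u}$ is an unbounded multiplication operator; Löwner--Heinz still applies in this setting, for instance via Balakrishnan's integral representation $x^t=\tfrac{\sin\pi t}{\pi}\int_0^\infty\lambda^{t-1}\tfrac{x}{\lambda+x}\,d\lambda$ combined with the order-reversing property $(B+\lambda)^{-1}\le(A+\lambda)^{-1}$ of the resolvent, which transports the inequality $A\le B$ directly to $A^t\le B^t$. A cleaner route, which I would present in the write-up, is to truncate first: setting $u_N:=\min\{u,N\}$, one has $e^{u_N}\le e^u$, so stability remains valid with $u_N$ in place of $u$; the operator $M_{e^{u_N}}$ is now bounded, Löwner--Heinz applies in its textbook form to give $\int e^{tu_N}\varphi^2\, dx\le\int|(-\Delta)^{st/2}\varphi|^2\, dx$, and monotone convergence as $N\to\infty$ recovers the statement for $u$ itself. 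Once this operator-theoretic picture is in place, the proof is essentially one line.
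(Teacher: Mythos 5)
Your proof is correct, but it follows a genuinely different route from the paper. The paper proves the lemma by Stein's complex interpolation (with change of measures) applied to the inverse Fourier transform, viewed as a map from $X_t=L^2((2\pi)^{-n}|\xi|^{2st}d\xi)$ to $Y_t=L^2(e^{tu}dx)$: the endpoint $t=0$ is Plancherel and the endpoint $t=1$ is the stability inequality, and interpolation gives the bound for all $t\in[0,1]$. Your argument replaces this harmonic-analytic interpolation by the operator-theoretic L\"owner--Heinz inequality applied to $A=M_{e^u}\le B=(-\Delta)^s$; in the $L^2$ setting these two devices are essentially equivalent (the Stein--Weiss interpolation with change of measure for a pair of weighted $L^2$ spaces is one standard proof of the Heinz inequality), so both proofs rest on a comparable classical black box. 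What your version buys is an explicit treatment of the unboundedness of the weight $e^u$ via the truncation $u_N=\min\{u,N\}$ and monotone convergence, a point the paper's proof does not address. Two small remarks: (i) to invoke L\"owner--Heinz in its standard form you should record that the form inequality, known a priori only on $C_c^\infty(\rn)$, extends to the full form domain $H^s(\rn)$ of $(-\Delta)^s$ by density together with a Fatou argument (this is routine but is the hypothesis the theorem actually requires); (ii) your computation yields the exponent $(-\Delta)^{st/2}$ inside the right-hand side, which is the intended statement --- the exponent $\frac{st}{4}$ in the displayed general inequality of the lemma is a typo, as one sees by checking consistency with the case $t=\frac12$ and with the $t=1$ endpoint, which must reproduce \eqref{stability}.
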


\begin{proof}
We apply complex interpolation between the family of spaces $X_t,~Y_t$ given for $0\leq t\leq 1$ by
\begin{equation*}
X_t=L^2((2\pi)^{-n}|\xi|^{2st}d\xi), \quad  Y_t=L^2(e^{tu}dx).
\end{equation*}
Recall that the inverse Fourier transform $\mathcal{F}^{-1}: X_0\rightarrow Y_0$ satisfies
$\|\mathcal{F}^{-1}\|_{\mathcal{L}(X_0,Y_0)}=1$. Furthermore, by the stability inequality \eqref{stability} and Plancherel's theorem, we have
\begin{equation*}
\int_{\rn}e^u\varphi^2dx\leq \int_{\rn}|\ss\varphi|^2dx=(2\pi)^{-n}\int_{\rn}|\xi|^{2s}|\mathcal{F}\varphi|^2d\xi.
\end{equation*}
Thus, $\mathcal{F}^{-1}: X_t\rightarrow Y_t$ satisfies $\|\mathcal{F}^{-1}\|_{\mathcal{L}(X_1,Y_1)}\leq 1$. By the complex interpolation theorem, we deduce that $\|\mathcal{F}^{-1}\|_{\mathcal{L}(X_t,Y_t)}\leq 1$ for all $0\leq t\leq1.$
\end{proof}

We shall use the equation \eqref{half-stability} to derive the energy estimate for $e^u$. To do that, we have to study $u,~v$ in the extended setting. Let $v(x)=\ss u(x)$ and $\ou,~\ov$ be the Caffarelli-Silvestre extension of $u,v$, i.e., $\ou,~\ov$ satisfies the following equations respectively
\begin{equation}
\label{32.1}
\begin{cases}
\nabla\cdot(y^{1-s}\nabla\ou)=0   \quad  &\mathrm{in}~\r,\\
\ou=u                              &\mathrm{on}~\partial\r,\\
-\lim_{y\to0}y^{1-s}\partial_y\ou=\kappa_s\ss u= \kappa_s v \quad&\mathrm{on}~\partial\r,
\end{cases}
\end{equation}
and
\begin{equation}
\label{32.2}
\begin{cases}
\nabla\cdot(y^{1-s}\nabla\ov)=0   \quad  &\mathrm{in}~\r,\\
\ov=v                              &\mathrm{on}~\partial\r,\\
-\lim_{y\to0}y^{1-s}\partial_y\ov=\kappa_s\ss v=\kappa_s e^u \quad&\mathrm{on}~\partial\r,
\end{cases}
\end{equation}
where
$$\kappa_s=\frac{\Gamma(1-\frac{s}{2})}{2^{s-1}\Gamma(\frac{s}{2})}.$$ We notice that the stability \eqref{half-stability} can be extended to the extended function. Precisely, if \eqref{half-stability} holds then
\begin{equation*}
\int_{\r}y^{1-s}|\nabla\Phi|^2dxdy\geq \kappa_s\int_{\rn}e^{\frac{u}{2}}\varphi^2dx
\end{equation*}
for every $\Phi\in C_c^\infty(\r)$ satisfying $\varphi(\cdot)=\Phi(\cdot,0)\in C_c^\infty(\rn)$. Indeed, let $\overline{\varphi}$ be the $\frac{s}{2}$-harmonic extension of $\varphi$, we have
\begin{align*}
\int_{\r}y^{1-s}|\nabla\Phi|^2dxdy\geq\int_{\r}y^{1-s}|\nabla\overline{\varphi}|^2dxdy
=\kappa_s\int_{\rn}\varphi\ss\varphi dx\geq\kappa_s\int_{\rn}e^{\frac{u}{2}}\varphi^2dx.
\end{align*}

The following result is about the first step in Moser iteration method.

\begin{lemma}
\label{le32.2}
Assume that $(u,v)$ solves
\begin{equation*}
\begin{cases}
\ss u=v\quad &\mbox{in}~\rn,\\
\ss v=e^u\quad &\mbox{in}~\rn.
\end{cases}
\end{equation*}
Suppose that $u$ is stable in $\Omega\subset\rn.$ Let $\Phi\in C_c^\infty(\r)$ be of the form $\Phi(x,y)=\varphi(x)\eta(y)$ for some $\varphi\in C_c^\infty(\Omega)$ and $\eta\equiv 1$ on $[0,1]$. Take $\alpha>\frac12, \varphi\in C_c^\infty(\Omega)$. Then there exists a constant $C$ depending only on $\alpha$ such that
\begin{equation}
\label{3moser-v}
\frac{\sqrt{2\alpha-1}}{\alpha}\|y^{\frac12-\frac12s}\nabla(\ov^\alpha\Phi)\|_{L^2(\r)}
\leq \kappa_s^\frac12\|e^{\frac{u}{2}}v^{\alpha-\frac12}\varphi\|_{L^2(\rn)}+
C\|y^{\frac12-\frac12s}\ov^{\alpha}\nabla\Phi\|_{L^2(\r)},
\end{equation}
and
\begin{equation}
\label{3moser-u}
\frac{2}{\sqrt{\alpha}}\|y^{\frac12-\frac12s}\nabla(e^{\frac{\alpha}{2}\ou}\Phi)\|_{L^2(\r)}
\leq \kappa_s^\frac12\|e^{\frac{\alpha u}{2}}v^{\frac12}\varphi\|_{L^2(\rn)}+
C\|y^{\frac12-\frac12s}e^{\frac{\alpha}{2}\ou}\nabla\Phi\|_{L^2(\r)}.
\end{equation}
\end{lemma}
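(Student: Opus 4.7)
The plan is to prove both estimates by a Moser-type test: multiply the harmonic-extension equation by a well-chosen nonlinear weight times $\Phi^2$, integrate by parts against $y^{1-s}$ to convert the bulk into a boundary integral via the Neumann condition, then treat the resulting quadratic identity as an inequality and solve for the gradient norm cleanly.

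For \eqref{3moser-v}, I would multiply $\nabla\cdot(y^{1-s}\nabla\ov)=0$ by $\ov^{2\alpha-1}\Phi^2$ and integrate over $\r$. The bulk term vanishes, and the only boundary piece comes from $\{y=0\}$ via $-\lim_{y\to 0}y^{1-s}\partial_y\ov=\kappa_s e^u$, yielding
$$(2\alpha-1)\int_{\r} y^{1-s}\ov^{2\alpha-2}|\nabla\ov|^2\Phi^2\,dxdy + 2\int_{\r} y^{1-s}\ov^{2\alpha-1}\Phi\,\nabla\ov\cdot\nabla\Phi\,dxdy = \kappa_s\int_{\rn} e^u v^{2\alpha-1}\varphi^2\,dx.$$
Setting $X:=\|y^{(1-s)/2}\ov^{\alpha-1}\Phi\nabla\ov\|_{L^2(\r)}$, $B:=\|y^{(1-s)/2}\ov^\alpha\nabla\Phi\|_{L^2(\r)}$, and $P:=\kappa_s^{1/2}\|e^{u/2}v^{\alpha-1/2}\varphi\|_{L^2(\rn)}$, Cauchy--Schwarz bounds the cross term by $2XB$ in absolute value, so the identity becomes $(2\alpha-1)X^2-2XB-P^2\leq 0$. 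Solving this quadratic in $X$ and applying $\sqrt{a+b}\leq\sqrt a+\sqrt b$ gives $\sqrt{2\alpha-1}\,X\leq P+2B/\sqrt{2\alpha-1}$. Since $\nabla(\ov^\alpha\Phi)=\alpha\ov^{\alpha-1}\Phi\nabla\ov+\ov^\alpha\nabla\Phi$, Minkowski yields $\|y^{(1-s)/2}\nabla(\ov^\alpha\Phi)\|_2\leq \alpha X+B$, and multiplying through by $\sqrt{2\alpha-1}/\alpha$ produces \eqref{3moser-v} with the explicit constant $C(\alpha)=\tfrac{2}{\sqrt{2\alpha-1}}+\tfrac{\sqrt{2\alpha-1}}{\alpha}$.

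For \eqref{3moser-u} the argument is parallel. I would test $\nabla\cdot(y^{1-s}\nabla\ou)=0$ against $\alpha^{-1}e^{\alpha\ou}\Phi^2$, using $-\lim_{y\to 0}y^{1-s}\partial_y\ou=\kappa_s v$. Introducing $W:=e^{\alpha\ou/2}$, so that $\nabla W=(\alpha/2)W\nabla\ou$, the identity simplifies to
$$\int_{\r} y^{1-s}\Phi^2|\nabla W|^2\,dxdy + \int_{\r} y^{1-s}W\Phi\nabla W\cdot\nabla\Phi\,dxdy = \frac{\alpha\kappa_s}{4}\int_{\rn} v\,e^{\alpha u}\varphi^2\,dx.$$
The same Cauchy--Schwarz/quadratic trick applied to $\tilde X:=\|y^{(1-s)/2}\Phi\nabla W\|_2$, followed by Minkowski on $\nabla(W\Phi)=\Phi\nabla W+W\nabla\Phi$, then yields \eqref{3moser-u}.

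\textbf{Main obstacle:} The delicate point is preserving the coefficient exactly $1$ in front of $\kappa_s^{1/2}$ on the right-hand side. A naive splitting via $(a+b)^2\leq 2a^2+2b^2$ or a Young's inequality with an $\varepsilon$-parameter would introduce a spurious $\sqrt 2$ or $(1+\varepsilon)^{1/2}$, and the precise prefactors $\sqrt{2\alpha-1}/\alpha$ and $2/\sqrt\alpha$ advertised in the statement would not emerge. Avoiding this requires treating the Moser identity as a genuine quadratic inequality in $X$ (respectively $\tilde X$), extracting its root cleanly via $\sqrt{a+b}\leq\sqrt a+\sqrt b$, and invoking Minkowski only at the final step, where the exact decomposition of $\nabla(\ov^\alpha\Phi)$ (respectively $\nabla(W\Phi)$) is order-one.
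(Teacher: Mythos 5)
Your proposal is correct and follows essentially the same route as the paper: test the degenerate-harmonic extension equation against the nonlinear weight times $\Phi^2$, convert the bulk into the boundary term via the Neumann condition, bound the cross term by Cauchy--Schwarz, and solve the resulting quadratic inequality to keep the coefficient of $\kappa_s^{1/2}$ equal to one. The only (immaterial) difference is bookkeeping — the paper sets up the quadratic directly in $X=\|y^{\frac12-\frac s2}\nabla(\ov^\alpha\Phi)\|_{L^2}$, whereas you solve it for $\|y^{\frac12-\frac s2}\ov^{\alpha-1}\Phi\nabla\ov\|_{L^2}$ and pass to $\nabla(\ov^\alpha\Phi)$ by Minkowski at the end; both yield constants of the required form.
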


\begin{proof}
For the extended equation of $\ov,$ we multiply by $\ov^{2\alpha-1}\varphi^2$ and integrate, we obtain that
\begin{equation*}
\begin{aligned}
\kappa_s\int_{\rn}e^uv^{2\alpha-1}\varphi^2dx=~&\int_{\r}y^{1-s}\nabla\ov\nabla(\ov^{2\alpha-1}\Phi^2)dxdy\\
=~&\frac{2\alpha-1}{\alpha^2}\left(\int_{\r}y^{1-s}|\nabla(\ov^{\alpha}\Phi)|^2dxdy-\int_{\r}y^{1-s}\ov^{2\alpha}|\nabla\Phi|^2dxdy\right)\\
&-\frac{2(\alpha-1)}{\alpha^2}\int_{\r}y^{1-s}\ov^\alpha\Phi\nabla\ov^\alpha\cdot\nabla\Phi dxdy.
\end{aligned}
\end{equation*}
We replace the last term $\Phi\nabla\ov^\alpha$ by $\nabla(\ov^\alpha\Phi)-v^\alpha\nabla\Phi.$ Then,
\begin{equation*}
\begin{aligned}
\kappa_s\int_{\rn}e^uv^{2\alpha-1}\varphi^2dx
=~&\frac{2\alpha-1}{\alpha^2}\int_{\r}y^{1-s}|\nabla(\ov^{\alpha}\Phi)|^2dxdy
-\frac{1}{\alpha^2}\int_{\r}y^{1-s}\ov^{2\alpha}|\nabla\Phi|^2dxdy\\
&-\frac{2(\alpha-1)}{\alpha^2}\int_{\r}y^{1-s}\ov^\alpha\nabla\Phi\nabla(\ov^\alpha\Phi)dxdy,
\end{aligned}
\end{equation*}
which can be rewritten as
\begin{equation}
\label{3.estimate-v}
\begin{aligned}
(2\alpha-1)\int_{\r}y^{1-s}|\nabla(\ov^\alpha\Phi)|^2dxdy
=~&2(\alpha-1)\int_{\r}y^{1-s}\ov^\alpha\nabla\Phi\nabla(\ov^\alpha\Phi)dxdy\\
~&+\int_{\r}y^{1-s}\ov^{2\alpha}|\nabla\Phi|^2dxdy+\alpha^2\kappa_s\int_{\rn}e^uv^{2\alpha-1}\varphi^2dx.
\end{aligned}
\end{equation}
By the Cauchy-Schwarz inequality,
\begin{equation*}
\left|\int_{\r}y^{1-s}\ov^\alpha\nabla\Phi\nabla(\ov^\alpha\Phi)dxdy\right|\leq
\left(\int_{\r}y^{1-s}\ov^{2\alpha}|\nabla\Phi|^2dxdy\right)^{\frac12}\left(\int_{\r}y^{1-s}|\nabla(\ov^\alpha\Phi)|^2dxdy\right)^{\frac12}.
\end{equation*}
Substituting the above equation into \eqref{3.estimate-v}, we obtain a quadratic inequality of the form
$$(2\alpha-1)X^2\leq 2|\alpha-1|AX+A^2+B^2,$$
where
$$X=\|y^{\frac12-\frac12s}\nabla(\ov^\alpha\Phi)\|_{L^2(\r)},\quad A=\|y^{\frac12-\frac12s}\ov^\alpha\nabla\Phi\|_{L^2(\r)},\quad
B=\alpha\|\kappa_s^\frac12e^{\frac12u}v^{\alpha-\frac12}\varphi\|_{L^2(\rn)}.
$$
Solving the quadratic inequality, we deduce that
\begin{equation*}
X\leq\frac{B}{\sqrt{2\alpha-1}}+C_\alpha A.
\end{equation*}
Equation \eqref{3moser-u} follows by the same argument. Particularly, we use the following equality
\begin{equation*}
\frac{4}{\alpha}\int_{\r}y^{1-s}|\nabla(e^{\frac{\alpha}{2}\ou}\Phi)|^2dxdy
=\frac{4}{\alpha}\int_{\r}y^{1-s}\nabla(e^{\frac{\alpha}{2}\ou}\Phi)e^{\frac{\alpha}{2}\ou}\nabla\Phi dxdy
+\kappa_s\int_{\rn}ve^{\alpha u}\varphi^2dx.
\end{equation*}
\end{proof}

Given the above lemma, we  derive the following integral estimates.
\begin{lemma}
\label{le32.3}
Let $\alpha^\sharp,\alpha^*$ denote the largest two roots of the polynomial $X^3-8X+4$. Suppose that $u,v$ satisfies \eqref{32.1}-\eqref{32.2} and \eqref{half-stability} holds for every $\Phi(x,y)=\varphi(x)\eta(y)$ with $\varphi\in C_c^\infty(\Omega)$ and $\eta\equiv 1$ on $[0,1]$. Then, for such $\Phi$ and every $\alpha\in(\alpha^\sharp,\alpha^*)$, there exists a constant $C$ depending only on $\alpha$ such that
\begin{equation}
\label{32.stability-1}
\int_{\r}y^{1-s}|\nabla(\ov^\alpha\Phi)|^2dxdy\leq C\int_{\r}y^{1-s}\ov^{2\alpha}|\nabla\Phi|^2dxdy,
\end{equation}
or
\begin{equation}
\label{32.stability-2}
\int_{\r}y^{1-s}|\nabla(e^{\frac{\alpha}{2}\ou}\Phi)|^2dxdy
\leq C\int_{\r}y^{1-s}e^{\alpha\ou}|\nabla\Phi|^2dxdy.
\end{equation}
\end{lemma}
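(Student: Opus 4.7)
The plan is to close a $2\times 2$ system of inequalities on the gradient norms of $\ov^\alpha\Phi$ and $e^{\alpha\ou/2}\Phi$ by combining the Moser-type bounds of Lemma~\ref{le32.2} with the extended half-stability applied to two natural extensions, linked by H\"older interpolation. Set
\[
X:=\|y^{(1-s)/2}\nabla(\ov^\alpha\Phi)\|_{L^2(\r)},\qquad Y:=\|y^{(1-s)/2}\nabla(e^{\alpha\ou/2}\Phi)\|_{L^2(\r)},
\]
\[
A:=\|y^{(1-s)/2}\ov^\alpha\nabla\Phi\|_{L^2(\r)},\qquad A':=\|y^{(1-s)/2}e^{\alpha\ou/2}\nabla\Phi\|_{L^2(\r)},
\]
together with $\mu_1:=\alpha/\sqrt{2\alpha-1}$, $\mu_2:=\sqrt{\alpha}/2$, $a:=(2\alpha-1)/(2\alpha)$ and $b:=1/(2\alpha)$. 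Lemma~\ref{le32.2} then reads $X\le\mu_1\kappa_s^{1/2}B_1+C_1A$ and $Y\le\mu_2\kappa_s^{1/2}B_2+C_2A'$, where $B_1=\|e^{u/2}v^{\alpha-1/2}\varphi\|_{L^2(\rn)}$ and $B_2=\|e^{\alpha u/2}v^{1/2}\varphi\|_{L^2(\rn)}$.

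First I would apply the extended form of \eqref{half-stability} (recorded just before Lemma~\ref{le32.2}) to the admissible test extensions $\ov^\alpha\Phi$ and $e^{\alpha\ou/2}\Phi$, whose boundary traces are $v^\alpha\varphi$ and $e^{\alpha u/2}\varphi$, obtaining
\[
\kappa_s\!\int_{\rn}\!e^{u/2}v^{2\alpha}\varphi^2\,dx\le X^2,\qquad \kappa_s\!\int_{\rn}\!e^{(\alpha+1/2)u}\varphi^2\,dx\le Y^2.
\]
Next I would use the pointwise identities
\[
e^uv^{2\alpha-1}=(e^{u/2}v^{2\alpha})^a(e^{(\alpha+1/2)u})^b,\qquad e^{\alpha u}v=(e^{u/2}v^{2\alpha})^b(e^{(\alpha+1/2)u})^a,
\]
forced uniquely by matching the powers of $u$ and $v$ (the conjugacy $a+b=1$ is then automatic), and apply H\"older's inequality with exponents $1/a,\,1/b$ to deduce $\kappa_s^{1/2}B_1\le X^aY^b$ and $\kappa_s^{1/2}B_2\le X^bY^a$. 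Plugging back into the Moser bounds yields the self-contained system
\[
X\le\mu_1X^aY^b+C_1A,\qquad Y\le\mu_2X^bY^a+C_2A'.
\]

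To finish, I would run a dichotomy. If neither \eqref{32.stability-1} nor \eqref{32.stability-2} held, then for any $M>1$ one could find $\Phi$ with $X>MC_1A$ and $Y>MC_2A'$ simultaneously; substituting into the system and using $1-a=b$ gives
\[
X\le\Bigl(\tfrac{\mu_1}{1-1/M}\Bigr)^{2\alpha}Y,\qquad Y\le\Bigl(\tfrac{\mu_2}{1-1/M}\Bigr)^{2\alpha}X.
\]
Multiplying forces either $X=0$ (in which case \eqref{32.stability-1} is trivial) or $\mu_1\mu_2\ge(1-1/M)^2$. A direct computation gives $\mu_1\mu_2=\alpha^{3/2}/(2\sqrt{2\alpha-1})$, so $\mu_1\mu_2<1$ is algebraically equivalent to $\alpha^3-8\alpha+4<0$, which is precisely the defining property of $(\alpha^\sharp,\alpha^*)$. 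Choosing $M$ so that $(1-1/M)^2>\mu_1\mu_2$ then yields the sought contradiction.

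I expect the main delicate step to be the H\"older interpolation: the exponents $a,b$ are fixed by the requirement $2\alpha a=2\alpha-1$ and $2\alpha b=1$ so that the two factors reproduce exactly $B_1$ and $B_2$, and it is the resulting identity $a+b=1$ that collapses the $2\times 2$ system into the single scalar absorption threshold $\mu_1\mu_2<1$. The rest is algebra: that this threshold matches the cubic $\alpha^3-8\alpha+4<0$ is precisely why the interval between the two largest roots of that cubic is the right interval for $\alpha$.
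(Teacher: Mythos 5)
Your proposal is correct and follows essentially the same route as the paper: the same Moser-type bounds from Lemma \ref{le32.2}, the same H\"older interpolation of $e^{u}v^{2\alpha-1}$ and $e^{\alpha u}v$ against the extended half-stability applied to $\ov^{\alpha}\Phi$ and $e^{\alpha\ou/2}\Phi$, and the same algebraic threshold $\alpha^{3/2}<2\sqrt{2\alpha-1}$, i.e.\ $\alpha^{3}-8\alpha+4<0$. The only difference is in the endgame: the paper multiplies the two coupled inequalities first and reads off the dichotomy from $\delta HK\le aY+bX+ab$, whereas you absorb the cutoff terms first and then multiply, which is an equivalent (and arguably more explicit) way to extract the conclusion with a constant depending only on $\alpha$.
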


\begin{proof}
By H\"older's inequality,
\begin{equation*}
\begin{aligned}
\kappa_s\int_{\rn}e^uv^{2\alpha-1}\varphi^2dx
\leq &\left(\kappa_s\int_{\rn}e^{\frac{u}{2}}e^{\alpha u}\varphi^2dx\right)^{\frac{1}{2\alpha}}\left(\kappa_s\int_{\rn}e^{\frac{u}{2}}v^{2\alpha}\varphi^2dx\right)^{\frac{2\alpha-1}{2\alpha}}\\
\leq &\left(\int_{\r}y^{1-s}|\nabla(e^{\frac{\alpha}{2}\ou}\Phi)|^2dxdy
\right)^{\frac{1}{2\alpha}}\left(\int_{\r}y^{1-s}|\nabla(\ov^\alpha\Phi)|^2dxdy\right)^{\frac{2\alpha-1}{2\alpha}}.
\end{aligned}
\end{equation*}
We set
$$H=\|y^{\frac12-\frac12s}\nabla(e^{\frac{\alpha}{2}\ou}\Phi)\|_{L^2(\r)},\quad  K=\|y^{\frac12-\frac12s}\nabla(\ov^\alpha\Phi)\|_{L^2(\r)}.$$
Similarly,
\begin{equation*}
\kappa_s\int_{\rn}e^{\alpha u}v\varphi^2dx\leq K^{\frac{1}{\alpha}}H^{2-\frac{1}{\alpha}}.
\end{equation*}
By Lemma \ref{le32.2}, we get
\begin{equation}
\label{32.hk-1}
\frac{\sqrt{2\alpha-1}}{\alpha}K\leq H^{\frac{1}{2\alpha}}K^{1-\frac{1}{2\alpha}}+ C\|y^{\frac12-\frac12s}\ov^\alpha\nabla\Phi\|_{L^2(\r)},
\end{equation}
\begin{equation}
\label{32.hk-2}
\frac{2}{\sqrt{\alpha}}H\leq
H^{1-\frac{1}{2\alpha}}K^{\frac{1}{2\alpha}}+ C\|y^{\frac12-\frac12s} e^{\frac{\alpha}{2}\ou}\nabla\Phi\|_{L^2(\r)}.
\end{equation}	
Multiplying \eqref{32.hk-1} by \eqref{32.hk-2}. Then
\begin{equation*}
\left(\frac{2\sqrt{2\alpha-1}}{\alpha\sqrt{\alpha}}-1\right)HK
\leq aK^{1-\frac{1}{2\alpha}}H^{\frac{1}{2\alpha}}
+bK^{\frac{1}{2\alpha}}H^{1-\frac{1}{2\alpha}}+ab,
\end{equation*}	
where
$$a=C\|y^{\frac12-\frac12s}e^{\frac{\alpha}{2}\ou}\nabla\Phi\|_{L^2(\r)}\quad\mathrm{and}\quad
b=C\|y^{\frac12-\frac12s}\ov^\alpha\nabla\Phi\|_{L^2(\r)}.$$
It is known that for any $\alpha\in(\alpha^\sharp,\alpha^*)$, we have
$$\delta:=\frac{2\sqrt{2\alpha-1}}{\alpha\sqrt{\alpha}}-1>0.$$	
Introduce
$$X=K^{\frac{1}{2\alpha}}H^{1-\frac{1}{2\alpha}}~\mathrm{and}~Y=K^{1-\frac{1}{2\alpha}}H^{\frac{1}{2\alpha}}.$$	
Then we get
\begin{equation*}
\delta XY\leq aY+bX+ab,
\end{equation*}
and it implies either \eqref{32.stability-1} or \eqref{32.stability-2} holds, we finish the proof.
\end{proof}

We now apply the above lemma and Sobolev inequality arguments to set up a bootstrap procedure.
\begin{lemma}
\label{le32.4}
Let $u$ be the solution of \eqref{main} with \eqref{stability} holds. If there exists a constant $C$ depending only on $\alpha$ such that
\begin{equation}
\label{32.condition}
\int_{B_{2r}\setminus B_r}(e^{\alpha u}+v^{2\alpha})dx\leq Cr^{n-2\alpha s},\quad  \forall r\geq2R.
\end{equation}
Then
\begin{equation}
\label{32.conclusion}
\int_{B_{2r}\setminus B_r}(e^{\frac{n}{n-s}\alpha u}+v^{\frac{2n}{n-s}\alpha})dx\leq Cr^{n-\frac{2n}{n-s}\alpha s},\quad \forall r\geq 4R,
\end{equation}
where $\alpha\in(\alpha^\sharp,\min\{\frac{n}{2s},\alpha^*\}).$
\end{lemma}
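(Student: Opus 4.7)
The goal is a standard Moser-type bootstrap: I will upgrade the annular scaling $\int_{B_{2r}\setminus B_r}(e^{\alpha u}+v^{2\alpha})\leq Cr^{n-2\alpha s}$ by the Sobolev factor $\frac{n}{n-s}$ in the exponents of $e^u$ and $v$. The two ingredients are Lemma \ref{le32.3} (a reverse-H\"older/Caccioppoli-type estimate on cylinders) and the sharp weighted Sobolev trace inequality in the Caffarelli--Silvestre framework,
\begin{equation*}
\Bigl(\int_{\rn}|w(\cdot,0)|^{\frac{2n}{n-s}}dx\Bigr)^{\frac{n-s}{n}}\leq C\int_{\r}y^{1-s}|\nabla w|^2\,dx\,dy,\qquad w\in W^{1,2}(\r,y^{1-s}).
\end{equation*}
The exponent $\frac{2n}{n-s}$ is precisely the critical Sobolev exponent for $H^{s/2}(\rn)$, which is why the gain in \eqref{32.conclusion} has exactly this form.

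\textbf{Step 1 (test function).} For $r\geq 4R$, I construct $\Phi(x,y)=\zeta(x)\eta(y)$ with a radial cutoff $\zeta\in C_c^\infty(B_{4r}\setminus B_{r/2})$ satisfying $\zeta\equiv 1$ on $B_{2r}\setminus B_r$ and $|\nabla\zeta|\leq C/r$, and $\eta\in C_c^\infty([0,r])$ satisfying $\eta\equiv 1$ on $[0,r/2]\supset[0,1]$ with $|\eta'|\leq C/r$. Since $\mathrm{supp}(\zeta)\subset\rn\setminus\overline{B_R}$, the half-stability \eqref{half-stability} is available on the support, so Lemma \ref{le32.3} applies and provides one of the dichotomous inequalities \eqref{32.stability-1}--\eqref{32.stability-2}. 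Applying the trace inequality above to $w=\ov^\alpha\Phi$ (respectively $w=e^{\alpha\ou/2}\Phi$) and using that $w(\cdot,0)=v^\alpha\zeta$ (respectively $e^{\alpha u/2}\zeta$) together with $\zeta\equiv 1$ on $B_{2r}\setminus B_r$, the task reduces to proving
\begin{equation*}
\int_{\r}y^{1-s}\ov^{2\alpha}|\nabla\Phi|^2\,dx\,dy\leq Cr^{n-s-2\alpha s},
\end{equation*}
and the analogue with $e^{\alpha\ou}$ in place of $\ov^{2\alpha}$; raising to the $\frac{n}{n-s}$-power produces exactly $r^{n-\frac{2n\alpha s}{n-s}}$.

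\textbf{Step 2 (weighted energy bound).} Since $\ov=P_y^s\ast v$ is the Poisson extension and the kernel has unit mass, Jensen gives the pointwise estimate $\ov^{2\alpha}(x,y)\leq (P_y^s\ast v^{2\alpha})(x)$, while convexity of the exponential yields $e^{\alpha\ou}(x,y)\leq (P_y^s\ast e^{\alpha u})(x)$. Combined with $|\nabla\Phi|^2\lesssim r^{-2}\,\mathbf{1}_{B_{4r}\setminus B_{r/2}\times[0,r]}$, Fubini reduces the estimate to controlling
\begin{equation*}
\int_{\rn}v^{2\alpha}(z)\,M(z)\,dz,\qquad M(z):=\int_0^{r}y^{1-s}\int_{B_{4r}\setminus B_{r/2}} P_y^s(x-z)\,dx\,dy.
\end{equation*}
For $|z|\leq 8r$ I use $\int P_y^s(x-z)dx\leq 1$ to obtain $M(z)\leq Cr^{2-s}$; for $|z|>8r$ the estimate $|x-z|\sim|z|$ gives $M(z)\leq Cr^{n+2}|z|^{-n-s}$. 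Then the hypothesis \eqref{32.condition}, summed over dyadic annuli as a geometric series (which converges since $\alpha<n/(2s)$ forces $n-2\alpha s>0$), yields $\int_{B_{8r}}v^{2\alpha}\leq Cr^{n-2\alpha s}$ and $\int_{B_{8r}^c}v^{2\alpha}|z|^{-n-s}\leq Cr^{-s-2\alpha s}$, so that after multiplication by $r^{-2}$ both contributions are of order $r^{n-s-2\alpha s}$. The argument for $e^{\alpha\ou}$ is identical.

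\textbf{Main obstacle.} The delicate points are the bookkeeping in Step 2---ensuring that the near-field ($|z|\leq 8r$) and far-field ($|z|>8r$) dyadic sums yield the same scaling $r^{n-s-2\alpha s}$ and that the $\eta'$-term near $y\sim r$ is absorbed at the same level---and the fact that Lemma \ref{le32.3} only furnishes \emph{one} of the two Caccioppoli inequalities per application. However, the hypothesis \eqref{32.condition} and the target \eqref{32.conclusion} are symmetric in $e^{\alpha u}$ and $v^{2\alpha}$, so that whichever inequality is delivered, the other quantity may be recovered either by re-running the argument (since the Jensen/Fubini step works identically for $e^{\alpha\ou}$) or by combining the improved estimate with the equation $\ss v=e^u$ and the integral representation of Lemma \ref{le31.5} together with standard Riesz potential bounds, thus upgrading both quantities simultaneously as claimed in \eqref{32.conclusion}.
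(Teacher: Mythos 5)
Your overall strategy coincides with the paper's: annular cut-offs $\Phi_r$, the dichotomy of Lemma \ref{le32.3}, a weighted energy bound $\int_{\r}y^{1-s}(e^{\alpha\ou}+\ov^{2\alpha})|\nabla\Phi_r|^2\,dxdy\le Cr^{n-(2\alpha+1)s}$ obtained from the Poisson representation, and the trace Sobolev inequality to gain the factor $\tfrac{n}{n-s}$. Your Step 2 is sound (the paper uses Minkowski's integral inequality for the $\ov^{2\alpha}$ term where you use Jensen; since $v\ge 0$ and $2\alpha>2\alpha^\sharp>1$, Jensen is legitimate and arguably cleaner), and your exponent bookkeeping $r^{-2}\cdot r^{2-s}\cdot r^{n-2\alpha s}=r^{n-(2\alpha+1)s}$, raised to the power $\tfrac{n}{n-s}$, is exactly the paper's.

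The genuine gap is in your resolution of the either/or in Lemma \ref{le32.3}, which you correctly identify as the delicate point but then dispose of incorrectly. Neither of your two fixes works. ``Re-running the argument'' does not break the dichotomy: the Jensen/Fubini step only controls the \emph{right-hand sides} $a^2=\int y^{1-s}e^{\alpha\ou}|\nabla\Phi_r|^2$ and $b^2=\int y^{1-s}\ov^{2\alpha}|\nabla\Phi_r|^2$; it never bounds the gradient term $H^2=\int y^{1-s}|\nabla(e^{\alpha\ou/2}\Phi_r)|^2$ unless \eqref{32.stability-2} happens to be the alternative delivered, and Lemma \ref{le32.3} gives you no control over which alternative you get. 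The second fix fails for a structural reason: the equation goes in the wrong direction. One has $v=I_s(e^u)$ (a Riesz potential of $e^u$), so $L^p$ bounds on $e^u$ yield $L^q$ bounds on $v$, but not conversely — from $v\in L^{2\alpha'}$ on annuli you cannot recover $e^{\alpha' u}\in L^1$ via $\ss v=e^u$ (differentiation loses integrability), and the representation formula of Lemma \ref{le31.5} expresses $u$ in terms of $e^u$ itself, so that route is circular. The correct mechanism, and the one the paper uses, is to go back to the \emph{coupled} inequalities \eqref{32.hk-1}--\eqref{32.hk-2} of Lemma \ref{le32.2}: once \eqref{32.stability-1} gives $K^2\le Cb^2\le Cr^{n-(2\alpha+1)s}$, inequality \eqref{32.hk-2} reads $\tfrac{2}{\sqrt{\alpha}}H\le H^{1-\frac{1}{2\alpha}}K^{\frac{1}{2\alpha}}+Ca$, and Young's inequality absorbs $H^{1-\frac{1}{2\alpha}}K^{\frac{1}{2\alpha}}\le \epsilon H+C_\epsilon K$ to give $H\le C(K+a)$, hence $H^2\le Cr^{n-(2\alpha+1)s}$ as well; the Sobolev trace applied to $e^{\alpha\ou/2}\Phi_r$ then yields the $e^{\frac{n}{n-s}\alpha u}$ half of \eqref{32.conclusion} (and symmetrically if \eqref{32.stability-2} is the alternative that holds). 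Without this step your proof only establishes one of the two bounds in \eqref{32.conclusion}, which would break the bootstrap in Proposition \ref{pr32.1}. A second, minor point: since \eqref{32.condition} is only assumed for $r\ge 2R$, your dyadic summations for $\int_{B_{8r}}v^{2\alpha}$ and for the far field must separately invoke local boundedness of $u$ and $v$ on $B_{4R}$ (as in \eqref{32.alphau} and \eqref{31.jensen-1} of the paper); this is routine but should be said.
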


\begin{proof}
By \eqref{32.condition}, it is not difficult to see that
\begin{equation*}
\int_{B_r\setminus B_{2R}}e^{\alpha u}\leq Cr^{n-2\alpha s}\quad \mathrm{for}~r>2R.
\end{equation*}
Indeed, for $r>2R$ of the form $r=2^{N_1}$ with some positive integer $N_1$ and taking $N_2$ to be the smallest integer such that $2^{N_2}\geq 2R$, by \eqref{32.condition} we deduce that
\begin{equation}
\label{32.alphau}
\begin{aligned}
\int_{B_{2^{N_1}}\setminus B_{2R}}e^{\alpha u}dx=~&
\int_{B_{2^{N_2}}\setminus B_{2R}}e^{\alpha u}dx+\sum_{\ell=1}^{N_1-N_2}\int_{B_{2^{N_2+\ell}}\setminus B_{2^{N_1+\ell}}}e^{\alpha u}dx\\
\leq~& C+C\sum_{\ell=1}^{N_1-N_2}(2^{N_2+\ell})^{n-2\alpha s}
\leq C2^{N_1(n-2\alpha s)},
\end{aligned}
\end{equation}
where we used $n-2\alpha s>0.$ Next we fix two non-negative smooth functions $\varphi$ on $\rn$ and $\eta$ on $[0,\infty)$ such that
\begin{equation*}
\varphi(x)=\begin{cases}
1\quad &\mathrm{on}~B_2\setminus B_1,\\
\\
0\quad &\mathrm{on}~B_{\frac34}\cup B_3^c,
\end{cases}
\quad\mathrm{and}\quad
\eta(y)=\begin{cases}
1\quad &\mathrm{on}~[0,1],\\
\\
0\quad &\mathrm{on}~[2,+\infty).
\end{cases}
\end{equation*}	
For $r>0$ we set $\Phi_r(x,y)=\varphi(\frac xr)\eta(\frac yr)$. Then we claim that
\begin{equation}
\label{32.claim}
\int_{\r}y^{1-s}(e^{\alpha\ou}+|\ov|^{2\alpha})|\nabla\Phi_r|^2dxdy\leq Cr^{n-(2\alpha+1)s}.
\end{equation}	
To prove the estimation on $e^{\alpha\ou}$, using the hypothesis \eqref{32.condition} we derive the following decay estimate
\begin{equation*}
\begin{aligned}
\int_{|z|\geq r}\frac{e^{\alpha u(z)}}{|z|^{n+s}}dz=
\sum_{i=0}^\infty\int_{2^{i+1}r\geq |z|\geq 2^ir}\frac{e^{\alpha u(z)}}{|z|^{n+s}}dz
\leq\frac{ C}{r^{s+2\alpha s}}\sum_{i=0}^\infty\frac{1}{2^{(1+2\alpha)si}}
\leq Cr^{-s-2\alpha s}.
\end{aligned}
\end{equation*}
On the other hand, by the Poisson representation formula, we have
$$\ou(X)=\int_{\rn}P(X,z)u(z)dz,$$
where $X=(x,y)$ and
\begin{equation*}
P(X,z)=d_{n,s}y^s|X-z|^{-n-s},
\end{equation*}
and $d_{n,s}$ is chosen such that $\int_{\rn}P(X,z)dz=1$. From the expression formula of the Poisson kernel, we get for $|x|\geq 3R$ that
\begin{align*}
\ou(x,y) &\leq C\frac{y^{s}}{(R+y)^{n+s}}\int_{|z|\leq 2R}u^+(z)dz+\int_{\B}\chi_{\B\setminus B_{2R}}(z)u(z)P(X,z)dz\\
&\leq C+\int_{\B}\chi_{\B\setminus B_{2R}}(z)u(z)P(X,z)dz,
\end{align*}
where $\chi_A$ denotes  the characteristic function of a set $A$. Using Jensen's inequality
\begin{align}
\label{31.jensen-1}
e^{\alpha\ou(x,y)}\leq C\int_{\B}\left(e^{\alpha u(z)} \chi_{\B\setminus B_{2R}}(z)+\chi_{B_{2R} }(z) \right)P(X,z)dz \quad\text{for }|x|\geq 3R.
\end{align}
For $r>3R$ and $y\in(0,r)$,  integrating both sides of the inequality \eqref{31.jensen-1} on $B_{r}\setminus B_{3R}$ with respect to $x$
\begin{align*}
\int_{B_{r}\setminus B_{3R}}e^{\alpha\ou(x,y)}dx
&\leq C \int_{|z|\leq 2R}\int_{|x|\leq r}P(X,z)dxdz+C\int_{|z|\geq 2r}\int_{|x|\leq r}e^{\alpha u(z)}P(X,z)dxdz\\
&\quad+C\int_{2R\leq |z|\leq 2r}e^{\alpha u(z)}\int_{|x|\leq r}P(X,z)dxdz \\ &\leq CR^n +Cr^{n+s}\int_{|z|\geq 2r}\frac{e^{\alpha u(z)}}{|z|^{n+s}}dz+C\int_{2R\leq |z|\leq 2r}e^{\alpha u(z)}dz\\
&\leq C+Cr^{n-2\alpha s}\leq Cr^{n-2\alpha s}.
\end{align*}
Using the fact that  $|\nabla \Phi_r |\leq  \frac Cr$, we obtain
\begin{equation}
\label{32.claim1}
\begin{aligned}
\int_{\r}y^{1-s}e^{\alpha\ou}|\nabla  \Phi_r |^2dxdy
\leq Cr^{-2}\int_0^{2r} y^{1-s}\int_{B_{3r}\setminus B_{2r/3}}e^{\alpha\ou(x,y)}dxdy
\leq Cr^{n-s(1+2\alpha)}.
\end{aligned}
\end{equation}	
For the estimation on $\ov$, by the Poisson representation formula, we have
\begin{equation*}
\ov(X)=\int_{\rn}P(X,z)v(z)dz.
\end{equation*}	
Regarding
$d_{n,s}y^s(|z|^2+y^2)^{-\frac12(n+s)}dz$ and $y^{1-s}|\nabla\Phi_r|^2dxdy$ as measures, we apply the Minkowski's inequality to  get
\begin{equation*}
\begin{aligned}
\int_{\r}y^{1-s}\ov^{2\alpha}|\nabla\Phi_r|^2dxdy
\leq &C\left(\int_{B_{8r}}
\left(\int_{\r}y^{1-s}v^{2\alpha}(x-z)|\nabla\Phi_r|^2dxdy\right)^{1/2\alpha}
\frac{y^s}{(|z|^2+y^2)^{\frac{n+s}{2}}}dz\right)^{2\alpha}\\
&+C\int_{\r}y^{1-s}\left(\int_{\rn\setminus B_{8r}}\frac{y^s|v(z)|}{((x-z)^2+y^2)^{\frac{n+s}{2}}}dz\right)^{2\alpha}
|\nabla\Phi_r|^2dxdy.
\end{aligned}
\end{equation*}
Concerning the second term on the right, for $|x|\leq 3r$ and $|y|\leq r$ we get
\begin{equation*}
\begin{aligned}
\int_{\rn\setminus B_{8r}}\frac{y^s|v(z)|}{((x-z)^2+y^2)^{\frac{n+s}{2}}}dz
\leq Cr^s\sum_{i=0}^\infty\int_{B_{2^{i+4}r}\setminus B_{2^{i+3}r}}
\dfrac{v(z)}{|z|^{n+s}}dz
\leq Cr^{-s}\sum_{i=0}^\infty\frac{2^n}{2^{(2i+7)s}}\leq Cr^{-s}.
\end{aligned}
\end{equation*}
The above estimates  yield
\begin{equation}
\label{32.claim2}
\int_{\r}y^{1-s}\ov^{2\alpha}|\nabla\Phi_r|^2dxdy\leq Cr^{n-(2\alpha+1)s}.
\end{equation}
Combining \eqref{32.claim1} and \eqref{32.claim2} we complete the proof of claim in \eqref{32.claim}. Now,  we are ready to prove \eqref{32.conclusion}. By Lemma \ref{le32.3}, either \eqref{32.stability-1} or \eqref{32.stability-2} holds. Assume that \eqref{32.stability-1} holds (and the other case is similar). Using the Sobolev embedding, we obtain
\begin{equation*}
\begin{aligned}
\left(\int_{\rn}v^{\frac{2n}{n-s}\alpha}\varphi^{\frac{2n}{n-s}}dx
\right)^{\frac{n-s}{n}}\leq~& C\int_{\rn}\left|(-\Delta)^{\frac{s}{4}}(v^\alpha\varphi)\right|^2dx
\leq C\int_{\r}y^{1-s}|\nabla(\ov^\alpha\Phi_r)|^2dxdy\\
\leq~&C\int_{\r}y^{1-s}\ov^{2\alpha}|\nabla\Phi_r|^2dxdy
\leq Cr^{n-(2\alpha+1)s}.
\end{aligned}
\end{equation*}
Together with the setting of $\varphi$, we obtain that
\begin{equation*}
\int_{B_{2r}\setminus B_r}v^{\frac{2n}{n-s}\alpha}dx\leq Cr^{n-\frac{2n}{n-s}\alpha s}.
\end{equation*}
Going back to \eqref{32.hk-2}, we deduce that
\begin{equation*}
\begin{aligned}
\left(\int_{\rn}e^{\frac{n}{n-s}\alpha u}\varphi^\frac{2n}{n-s}dx\right)^{\frac{n-s}{n}}\leq~&
C\int_{\r}y^{1-s}|\nabla(e^{\frac{\alpha}{2}\ou}\Phi_r)|^2dxdy\\
\leq~& C\int_{\r}y^{1-s}|\nabla(\ov^\alpha\Phi_r)|^2dxdy
+C\int_{\r}y^{1-s}e^{\alpha\ou}|\nabla\Phi_r|^2dxdy\\
\leq~& Cr^{n-(2\alpha+1)s},
\end{aligned}
\end{equation*}
from which we derived that
\begin{equation*}
\int_{B_{2r}\setminus B_r}e^{\frac{n}{n-s}\alpha u}dx\leq Cr^{n-\frac{2n}{n-s}\alpha s}.
\end{equation*}
Hence we finish the proof.
\end{proof}

Define the following parameter that depends on the dimension $n$ and $s$,
\begin{equation}
\alpha_s=\max\left\{\frac{n}{n-s}\min\left\{\frac{n}{2s},\alpha^*\right\},~\min\left\{\frac{n}{2s},
\alpha^*\right\}+\frac12\right\}.
\end{equation}
Summarizing the prior estimates, we reach the following conclusion for $L^p$ estimates for $e^u$ with certain $p$.

\begin{proposition}
\label{pr32.1}
Let $u$ be a solution of \eqref{main}. Assume that $u$ is stable on $\rn\setminus B_R$. Then for every $p\in[1,\alpha_s)$ there exists $C=C(p)>0$ such that for $r$ large
\begin{equation}
\label{32.est-0}
\int_{B_{2r}\setminus B_r}e^{pu(x)}dx\leq Cr^{n-2ps}.
\end{equation}
In particular,
\begin{itemize}
\item[(i)] for $|x|$ large,
\begin{align}
\label{32.est-1}
\int_{B_{\frac{|x|}{2}}(x)}e^{pu (z)}dz\leq C(p)|x|^{n-2ps},\quad\forall p\in[1,\alpha_s),
\end{align}
\item[(ii)] for $r$ large
\begin{align}
\label{32.est-2}
\int_{B_r\setminus  B_{2R}}e^{pu(x)}dx\leq C(p)r^{n-2ps},\quad \forall p\in[1,\min\{\frac{n}{2s},\alpha^*\}).
\end{align}
\end{itemize}
\end{proposition}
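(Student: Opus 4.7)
The plan is to derive \eqref{32.est-0} by combining a base integrability estimate with a Moser-type iteration powered by Lemma~\ref{le32.4}, and then extract \eqref{32.est-1} and \eqref{32.est-2} by elementary covering/dyadic summation arguments.

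First I would establish the base annular estimate
$$\int_{B_{2r}\setminus B_r}\bigl(e^{u(x)}+v(x)^2\bigr)\,dx\le Cr^{n-2s},\qquad r\ge 2R.$$
The $e^u$ part is immediate from Corollary~\ref{cr31.1} applied with $\lambda=1$. For the $v^2$ part, I would use the integral representation $v(x)=c(n,s)\int_{\rn}|x-z|^{-(n-s)}e^{u(z)}dz$ (implicit in Lemma~\ref{le31.5} and used in Lemma~\ref{le31.6}), split the $z$-integral into $B_{8r}$ and its complement, handle the near part by Minkowski's inequality and Lemma~\ref{le31.1}, and handle the far part by the dyadic decay estimate already used in the proof of \eqref{32.claim2}. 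A numerical check on the cubic $X^3-8X+4$ shows $\alpha^\sharp<1<\alpha^\ast$, so $\alpha_0=1\in(\alpha^\sharp,\min\{n/2s,\alpha^\ast\})$ is a legitimate entry point for Lemma~\ref{le32.4}.

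Next I would iterate Lemma~\ref{le32.4}: define $\alpha_{k+1}=\tfrac{n}{n-s}\alpha_k$ starting from $\alpha_0=1$. As long as $\alpha_k\in(\alpha^\sharp,\min\{n/2s,\alpha^\ast\})$, the lemma promotes the annular bound at exponent $\alpha_k$ to one at exponent $\alpha_{k+1}$. Since the multiplier $\tfrac{n}{n-s}>1$, after finitely many steps $\alpha_k$ approaches $\min\{n/2s,\alpha^\ast\}$ from below, and one final application gives \eqref{32.est-0} for every $p<\tfrac{n}{n-s}\min\{n/2s,\alpha^\ast\}$. To reach the second alternative in the definition of $\alpha_s$, I would apply Lemma~\ref{le32.2} directly with $\alpha=\min\{n/2s,\alpha^\ast\}-\varepsilon+\tfrac12$: the crossed right-hand side $\|e^{u/2}v^{\alpha-1/2}\varphi\|_{L^2}$ (resp.\ $\|e^{\alpha u/2}v^{1/2}\varphi\|_{L^2}$) can be controlled by H\"older's inequality with exponents $(2\alpha,\tfrac{2\alpha}{2\alpha-1})$ in terms of the $L^{2\alpha}$ and $L^{2\alpha-1}$ estimates already produced by the bootstrap; combining with the trace-Sobolev inequality on the extended space, one picks up one extra half power, yielding the exponent $\min\{n/2s,\alpha^\ast\}+\tfrac12$. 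Taking the maximum of the two routes gives \eqref{32.est-0} for all $p<\alpha_s$.

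Finally, the two consequences are routine. For \eqref{32.est-1}, for $|x|$ large the ball $B_{|x|/2}(x)$ is contained in $B_{3|x|/2}\setminus B_{|x|/2}$ and hence in a uniformly bounded union of annular shells $B_{2^{k+1}}\setminus B_{2^k}$ with $2^k\sim|x|$; summing \eqref{32.est-0} over this finite collection yields the $|x|^{n-2ps}$ bound. For \eqref{32.est-2}, I would use the dyadic decomposition already displayed in \eqref{32.alphau}: the geometric series $\sum_k(2^k)^{n-2ps}$ converges to a multiple of $r^{n-2ps}$ exactly because $n-2ps>0$, which is why the extra restriction $p<n/(2s)$ must appear for the \emph{global} estimate \eqref{32.est-2} while the localized estimates \eqref{32.est-0}-\eqref{32.est-1} may run up to $\alpha_s$.

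The main obstacle will be the $+\tfrac12$ improvement at the end of the bootstrap: one needs to choose the H\"older exponents so that the cross-term $\|e^{u/2}v^{\alpha-1/2}\varphi\|_{L^2}$ is controlled \emph{strictly} by quantities already bounded by earlier steps of the iteration, without reintroducing the exponent one is trying to bound. Verifying that $\alpha^\sharp<1$ (to legitimize the base case) and tracking the constants through the finitely many iterations of Lemma~\ref{le32.4} are straightforward but require some care to make sure that the constant $C(p)$ blows up only as $p\uparrow\alpha_s$.
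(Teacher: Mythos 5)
Your overall architecture matches the paper's: a base annular estimate for $e^{\alpha u}+v^{2\alpha}$, finitely many applications of Lemma~\ref{le32.4} to push the exponent up to $\frac{n}{n-s}\min\{\frac{n}{2s},\alpha^*\}$, a separate ``$+\frac12$'' improvement coming from the half-stability inequality \eqref{half-stability} applied through Lemma~\ref{le32.2}, and dyadic covering/summation for (i) and (ii). The consequences (i) and (ii) are handled exactly as in the paper, including the observation that the restriction $p<\frac{n}{2s}$ in (ii) is what makes the geometric series $\sum_k (2^k)^{n-2ps}$ sum to $Cr^{n-2ps}$.

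However, your entry point for the iteration has a genuine gap. You claim the base case $\int_{B_{2r}\setminus B_r}(e^u+v^2)\,dx\le Cr^{n-2s}$, i.e.\ $\alpha_0=1$, and propose to get the $v^2$ part from the representation $v=c(n,s)\,|x|^{s-n}*e^u$ via Minkowski's inequality. This fails: Minkowski (or Young) for the $L^2$ norm of the near part requires $\int_{B_{2r}\setminus B_r}|x-z|^{-2(n-s)}dx<\infty$ for $z$ in the relevant region, which holds only when $2(n-s)<n$, i.e.\ $n<2s$ --- the opposite of the standing assumption $n>2s$. More structurally, the only input available at this stage is $e^u\in L^1$ on balls (Lemma~\ref{le31.1}), and the Riesz potential of an $L^1$ function lies in $L^p_{\mathrm{loc}}$ only for $p<\frac{n}{n-s}<2$; so no argument from $L^1$ data alone can yield $v\in L^2_{\mathrm{loc}}$. (The $L^2$ bound on $v^\lambda$ is indeed true --- it is Lemma~\ref{le33.2} --- but its proof uses Proposition~\ref{pr32.1} and Lemma~\ref{le32.5}, so invoking it as the base case would be circular.) The correct entry point, as in the paper, is to prove $\int_{B_r}v\,dx\le Cr^{n-s}$ from the representation formula, upgrade by local elliptic estimates to $\int_{B_{2r}\setminus B_r}v^p\,dx\le Cr^{n-ps}$ for $p\in[1,\frac{n}{n-s})$, and start the iteration at $\alpha\in(\alpha^\sharp,\min\{\frac{n}{2(n-s)},1\})$. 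This forces you to check that the window is nonempty, i.e.\ that $\frac{n}{n-s}>2\alpha^\sharp$; this is not automatic from $\alpha^\sharp<1$ but does hold in the dimension range $2s<n<n_0(s)$ relevant under \eqref{1.condition}, as the paper verifies in a footnote. Once the base case is repaired in this way, the rest of your argument goes through.
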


\begin{proof}
By Lemma \ref{le31.1}, we have
\begin{equation}
\label{32.u}
\int_{B_r}e^udx\leq Cr^{n-2s},\quad \forall r\geq1.
\end{equation}
Using the fact that $\ss u(\infty)=0$, we obtain that
$$v(x)=C\int_{\rn}\frac{1}{|x-z|^{n-s}}e^{u(z)}dz.$$
Based on this expression, we see that
\begin{equation*}
\begin{aligned}
\int_{B_r}v(x)dx=~&\int_{B_r}\int_{B_{2r}}\frac{e^{u(z)}}{|x-z|^{n-s}}dzdx
+\int_{B_{r}}\int_{\rn\setminus B_{2r}}\frac{e^{u(z)}}{|x-z|^{n-s}}dzdx\\
\leq~& Cr^{n-s}+Cr^n\sum_{i=1}^{\infty}\int_{B_{2^{i+1}r}\setminus B_{2^ir}}\frac{e^{u(z)}}{|z|^{n-s}}dz\\
\leq~& Cr^{n-s}+Cr^{n-s}\sum_{i=1}^{\infty}\frac{1}{2^{is}}
\leq Cr^{n-s}.
\end{aligned}
\end{equation*}	
Applying standard elliptic estimates,  for $p\in[1,\frac{n}{n-s})$, here we have Lemma \ref{le32.4} holds for $2\alpha>2\alpha^\sharp$ only for $\frac{n}{n-s}>2\alpha^\sharp$. The latter inequality holds automatically under \eqref{1.condition} holds \footnote{
According to our computations $\alpha^\sharp \approx 0.517304$ and $\alpha^* \approx 2.53407$. So, $\frac{n}{n-s}>2\alpha^\sharp$ is equivalent to $n<(1+\frac{1}{2\alpha^\sharp-1})s$ where  $1+\frac{1}{2\alpha^\sharp-1}\approx 29$. Note that we are interested in dimensions in \eqref{1.condition} that reads $2s<n<n_0(s)$ and $10=n_0(1)<n_0(s)<n_0(2) \approx 12.5$ when $1<s<2$. }. So,
\begin{equation*}
\|v\|_{L^p(B_2\setminus B_1)}\leq C\|\ss v\|_{L^1(B_3\setminus B_{\frac{1}{3}})}+C\|v\|_{L^1(B_3\setminus B_{\frac{1}{3}})},
\end{equation*}
and its rescaled version
\begin{equation*}
r^{-n/p}\|v\|_{L^p(B_{2r}\setminus B_r)}\leq Cr^{s-n}\|\ss v\|_{L^1(B_{3r}\setminus B_{\frac{1}{3}r})}
+Cr^{-n}\|v\|_{L^1(B_{3r}\setminus B_{\frac{1}{3}r})}
\leq Cr^{-s},
\end{equation*}
which implies that
\begin{equation}
\label{32.v1}
\int_{B_{2r}\setminus B_r}v^pdx\leq Cr^{n-ps},~\quad \forall r\geq1,~\ p\in[1,\frac{n}{n-s}).
\end{equation}	
Therefore by \eqref{32.u} and \eqref{32.v1} we get that
\begin{equation*}
\int_{B_{2r}\setminus B_r}(e^{\alpha u}+v^{2\alpha})dx\leq Cr^{n-\alpha s},\quad  \forall r\geq 2R,\quad  \forall
\alpha\in(\alpha^\sharp,~\min\{\frac{n}{2n-2s},~1\}).
\end{equation*}	
Repeating the arguments of Lemma \ref{le32.4} finitely many times we could get \eqref{32.est-0}. The reason for us to set $\alpha_s$ is for in the arguments of Lemma \ref{le32.4} we could get
\begin{equation*}
\int_{\rn}e^{(\frac12+2\alpha) u}\varphi^2dx
+\left(\int_{\rn}e^{\frac{n}{n-s}\alpha u}\varphi^\frac{2n}{n-s}dx\right)^{\frac{n-s}{n}}\leq
C\int_{\r}y^{1-s}|\nabla(e^{\frac{\alpha}{2}\ou}\Phi_r)|^2dxdy,
\end{equation*}
which implies
\begin{equation*}
\int_{B_{2r}\setminus B_r}e^{(\frac12+2\alpha)u}dx\leq  C^{r-(2\alpha+1)s}\quad \mathrm{and}\quad  \int_{B_{2r}\setminus B_r}e^{\frac{n}{n-s}\alpha u}dx
\leq Cr^{n-\frac{2n}{n-s}\alpha s}.
\end{equation*}
So we get either $\alpha$ could be improved to $\alpha+\frac12$ or $\frac{n}{n-s}\alpha.$  For \eqref{32.est-1}, it follows immediately from \eqref{32.est-0} as $B_{|x|/2}(x)\subset B_{2r}\setminus B_r$ with $r=|x|$. In the same spirit of the estimate \eqref{32.alphau} one can obtain the \eqref{32.est-2} and we finish the proof.
\end{proof}

We end this subsection with an upper bound on $u$.
\begin{lemma}
\label{le32.5}
Suppose that $n<2\alpha^*s$. If $u$ satisfies  the assumptions in Proposition \ref{pr32.1},   then for $|x|$ large we have
$$u(x)\leq -2s\log|x|+C.$$
\end{lemma}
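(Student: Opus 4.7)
The plan is to prove a uniform upper bound on a rescaled solution. Set $u_r(x):=u(rx)+2s\log r$, which satisfies $(-\Delta)^s u_r=e^{u_r}$. The conclusion $u(y)\leq -2s\log|y|+C$ for $|y|$ large is equivalent to the assertion that there is a constant $C$ such that $u_r(\hat x)\leq C$ for every $r\geq R_0$ large and every unit vector $\hat x$. The hypothesis $n<2\alpha^* s$ gives $\min\{n/(2s),\alpha^*\}=n/(2s)$, so from the definition of $\alpha_s$ one has $\alpha_s\geq n/(2s)+1/2>n/(2s)$, and we may fix an exponent $p\in(n/(2s),\alpha_s)$. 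Rescaling Proposition \ref{pr32.1} yields that $\|e^{u_r}\|_{L^p(B_R)}$ is uniformly bounded for every fixed $R>0$ and every $r\geq 1$; the inequality $n-2ps<0$, which is precisely what $n<2\alpha^*s$ provides, is crucial.

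\noindent\textbf{Pointwise estimate via the representation.} Applying Lemma \ref{le31.5} to $u_r$ and discarding the nonnegative term coming from the second kernel, it suffices to bound uniformly in $r$ both $c_r$ and the Riesz-type integral
\[
J(\hat x):=\int_{\mathbb R^n}\frac{e^{u_r(z)}}{|\hat x-z|^{n-2s}}\,dz.
\]
To control $J(\hat x)$ I split $\mathbb R^n$ into (i) the singular region $\{|z-\hat x|<1\}$, (ii) an intermediate annulus $\{1\leq|z-\hat x|,\ |z|\leq 3\}$, and (iii) the far field $\{|z|>3\}$. On (i), H\"older with conjugate exponents $(p,p')$ applies because $p>n/(2s)$ forces $(n-2s)p'<n$, so $|\hat x-z|^{-(n-2s)}\in L^{p'}_{\mathrm{loc}}$; combined with the uniform $L^p$-bound on $e^{u_r}$ this piece is controlled. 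On (ii) the kernel is bounded and the uniform $L^1$-bound on $e^{u_r}$ (from rescaling Lemma \ref{le31.1}) suffices. On (iii) I use the sharper decay arising from the \emph{difference} of kernels in Lemma \ref{le31.5}: a Taylor expansion at infinity yields
\[
\left|\frac{1}{|\hat x-z|^{n-2s}}-\frac{1}{(1+|z|)^{n-2s}}\right|=O(|z|^{-(n-2s+1)}),
\]
which when combined with the crude annular bound $\int_{B_{2\rho}\setminus B_\rho}e^{u_r}\leq C\rho^{n-2s}$ produces a convergent geometric sum over dyadic shells.

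\noindent\textbf{Control of $c_r$ and conclusion.} To control $c_r$, integrate the representation against a fixed nonnegative cutoff $\phi\in C_c^\infty(B_2\setminus B_{1/2})$ with $\int\phi=1$. Since $\int e^{u_r}\phi\leq C$ uniformly, Jensen's inequality gives
\[
\int u_r\phi\,dx=\int\log(e^{u_r})\phi\,dx\leq\log\!\int e^{u_r}\phi\,dx\leq C.
\]
Integrating the representation of Lemma \ref{le31.5} against $\phi$ and using that the iterated integral of the kernel difference against $e^{u_r}\phi$ is uniformly bounded (by the estimate of $J$ in the preceding paragraph applied for each $x\in\mathrm{supp}\,\phi$), we get $c_r\leq C$ uniformly. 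Combining this with the pointwise bound on $J(\hat x)$ yields $u_r(\hat x)\leq C$ uniformly in $r$ and $\hat x\in\mathbb S^{n-1}$; setting $r=|y|$ and $\hat x=y/|y|$ gives $u(y)\leq -2s\log|y|+C$.

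\noindent\textbf{Main obstacle.} The heart of the argument is the far-field estimate on $J$: neither of the individual kernels $|\hat x-z|^{-(n-2s)}$ or $(1+|z|)^{-(n-2s)}$ pairs with $e^{u_r}$ to give a convergent integral at infinity under only the crude annular bound, and even the subcritical $L^p$-bound with $p>n/(2s)$ gives a borderline logarithmic divergence if used naively. The extra $|z|^{-1}$ of decay produced by the \emph{cancellation} between the two kernels in Lemma \ref{le31.5} is exactly what makes the dyadic sum converge, and this cancellation is the technical crux of the proof.
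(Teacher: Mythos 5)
Your argument is correct and is essentially the paper's own proof in rescaled form: both rest on the representation formula of Lemma \ref{le31.5}, the far-field cancellation between the two kernels paired with the dyadic $L^1$ bound of Lemma \ref{le31.1}, the near-field H\"older estimate with an exponent $p>\frac{n}{2s}$ supplied by Proposition \ref{pr32.1}(i) (which is exactly where $n<2\alpha^*s$ enters), and Jensen's inequality to convert the $L^1$ bound on $e^u$ into an upper bound on an average of $u$ (you apply it to an annular cutoff to control $c_r$, while the paper averages over $B_r$ and uses the monotonicity of $W(r)=c(n,s)\int_{|z|\le r}(1+|z|)^{2s-n}e^{u(z)}dz$). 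One cosmetic slip: your $J(\hat x)$ is defined with the single kernel, whose far-field integral does not converge under the available bounds, so the quantity you actually estimate in region (iii) is the integral against the \emph{difference} of the two kernels, as your own closing paragraph makes clear.
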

\begin{proof}
Set
$$W (r):=c(n,s)\int_{|z|\leq r}\frac{1}{(1+|z|)^{n-2s}}e^{u(z)}dz.$$
It is easy to see that $W(r)$ is locally bounded by Lemma \ref{le31.1}. Next,
we claim that for $|x|$ large we have
\begin{align}
\label{u-u}
u(x)=-W(|x|)+O(1).
\end{align}
Using the estimate
$$\left| \frac{1}{|x-z|^{n-2s}}-\frac{1}{(1+|z|)^{n-2s}}\right|\leq C\frac{|x|}{|z|^{n-2s+1}},\quad |x|\geq 1, \,|z|\geq \frac32|x|, $$
we get
\begin{align*}
\int_{|z|\geq \frac32|x|} \left| \frac{1}{|x-z|^{n-2s}}-\frac{1}{(1+|z|)^{n-2s}}\right|e^{u(z)}dz &\leq C|x|\int_{|z|\geq |x|} \frac{e^{u(z)}}{|z|^{n-2s+1}}dz \\&\leq C|x|\sum_{i=0}^\infty\frac{1}{(2^i|x|)^{n-2s+1}}\int_{2^i|x|\leq |z|\leq 2^{i+1}|x|}e^{u(z)}dz\\&\leq C,
\end{align*}
where Lemma \ref{le31.1} is used. Choosing $p\in (1,\alpha_s)$ such that $(n-2s)p'<n$ (this is possible as $n<2\alpha^*s$), and together with \eqref{32.est-1} we have
\begin{align*} \int_{B_\frac{|x|}{2}(x)}\frac{e^{u(z)}}{|x-z|^{n-2s}}dz \leq \left(  \int_{B_\frac{|x|}{2}(x)}\frac{dz}{|x-z|^{(n-2s)p'}} \ \right)^\frac{1}{p'} \left(  \int_{B_\frac{|x|}{2}(x)}e^{pu(z)} dz\ \right)^\frac1p\leq C.
\end{align*}
Using the above estimates and the representation formula \eqref{31.7} we get \eqref{u-u}. Then for $r$ large, using Jensen's inequality we see that
\begin{align*}
\log\left( \frac{1}{|B_r|}\int_{B_r}e^{u(z)}dz\right)
 \geq \frac{1}{|B_r|}\int_{B_r}u(z)dz =-\frac{1}{|B_r|}\int_{B_r}W(|z|)dz+O(1)
 \geq -W(r)+O(1),
\end{align*}
where we used the fact that $W$ is monotone increasing in the last inequality. Thus,  we obtain
$$-W(r)\leq \log(C r^{-2s})+O(1)=-2s\log r+O(1).$$
This completes the proof.
\end{proof}

\subsection{Integral Estimates related to Monotonicity Formula}
First of all, we establish the representation formula of $u_e$. It is discussed in \cite{dghm} that
\begin{equation*}
u_e(x,y)=\kappa_{n,s}\int_{\rn}\frac{y^{2s}}{(y^2+|x-z|^2)^{\frac{n}{2}+s}}u(z)dz=
\int_{\rn}P_{s}(x-z,y)u(z)dz,
\end{equation*}
where
$$P_{s}(x,y)=\kappa_{n,s}\frac{y^{2s}}{(x^2+y^2)^{\frac{n}{2}+s}}\quad\mathrm{and}\quad
\kappa_{n,s}=\frac{\Gamma(\frac{n}{2}+s)}{\Gamma(s)\pi^{\frac{n}{2}}}.$$
According to the setting of $\ue$ and the rescaling argument, one can see that
\begin{equation*}
E(\lambda,0,u_e)=E(1,0,u_e^\lambda).
\end{equation*}
We now estimate the term $y^{3-2s}u_e^\lambda$.
\begin{lemma}
\label{le33.1}
Let $u_e^\l$ be the $s$-harmonic extension of $u^\l$, then
\begin{equation*}
\int_{\partial B_1^{n+1}\cap\r}y^{3-2s}\ue(X)d\sigma=c_sc_\l+O(1),
\end{equation*}
where $c_\l$ is defined in \eqref{31.7} and $c_s$ is a positive finite number given by
$$c_s:=\int_{\partial B_1^{n+1}\cap\r}y^{3-2s}d\sigma.$$
\end{lemma}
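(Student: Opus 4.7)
The plan is to decompose $u^\l$ according to Lemma~\ref{le31.5} as $u^\l(x)=w^\l(x)+c_\l$, and then exploit the linearity of the Poisson-type extension together with the fact that constants extend to themselves. The latter holds here because $\int_{\rn} P_s(x-z,y)\,dz=1$ by the definition of $\k$, so the function constantly equal to $c_\l$ in $\rn$ has extension equal to $c_\l$ in $\r$. By linearity, $\ue(X)=(w^\l)_e(X)+c_\l$, and integrating against the weight $y^{3-2s}$ on $\partial B_1^{n+1}\cap\r$ yields
\[
\int_{\partial B_1^{n+1}\cap\r} y^{3-2s}\ue\,d\sigma = c_s c_\l + I^\l, \qquad I^\l:=\int_{\partial B_1^{n+1}\cap\r} y^{3-2s}(w^\l)_e\,d\sigma.
\]
It then remains to show the bound $|I^\l|\leq C$ uniformly in $\l\geq 1$.

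To handle $I^\l$, I would apply Fubini. Writing $(w^\l)_e(X)=\int_{\rn} P_s(x-z,y)w^\l(z)\,dz$ and swapping the order of integration produces $I^\l=\int_{\rn} w^\l(z)K(z)\,dz$, where
\[
K(z):=\k\int_{\partial B_1^{n+1}\cap\r}\frac{y^{3}}{(|x-z|^2+y^2)^{(n+2s)/2}}\,d\sigma(X).
\]
The target pointwise estimate is $K(z)\leq C(1+|z|)^{-(n+2s)}$, which plugged against the weighted-$L^1$ bound $\int|w^\l(z)|(1+|z|)^{-(n+2s)}\,dz\leq C$ from Lemma~\ref{le31.3} closes the argument. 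The far-field portion is straightforward: for $|z|\geq 2$, every $X\in\partial B_1^{n+1}\cap\r$ satisfies $|x-z|^2+y^2\geq (|z|-1)^2\sim |z|^2$, giving $K(z)\lesssim |z|^{-(n+2s)}$, and consequently the tail contribution $\int_{|z|\geq 2} w^\l K\,dz$ is uniformly controlled by Lemma~\ref{le31.3}.

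The main obstacle is the local region $|z|\leq 2$, where $K$ can degenerate near the equator of the hemisphere. Parameterizing $X=(\sin\phi\cdot\omega,\cos\phi)$ with $\phi\in[0,\pi/2]$ and $\omega\in\b$, the only potential loss of integrability occurs near the point $\phi=\pi/2$, $\omega=z/|z|$ when $|z|\to 1$. Setting $\eta=\pi/2-\phi$ and $\psi=\omega-z/|z|$, a local expansion gives an integrand of the form $\eta^3\bigl(\eta^2+|\psi|^2+(|z|-1)^2\bigr)^{-(n+2s)/2}$; the crucial cubic power $y^3$ in the numerator (produced by the cancellation $y^{3-2s}\cdot y^{2s}=y^3$) beats the singularity exactly because $3-2s>-1$ when $s<2$. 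A rescaling argument then yields $K(z)\leq C$ for $|z|$ away from the unit sphere, and at worst an integrable singularity $K(z)\lesssim |1-|z||^{3-2s}$ when $s>3/2$. Combining this integrability with the local control of $w^\l$ afforded by representation \eqref{31.5}, Proposition~\ref{pr32.1}, and standard Riesz potential estimates (which place $w^\l$ in appropriate $L^p_{\mathrm{loc}}$ uniformly in $\l$), one deduces that $\int_{|z|\leq 2}|w^\l(z)|K(z)\,dz\leq C$. Summing the two contributions yields $|I^\l|\leq C$ and completes the proof; the key technical point is the kernel analysis near the equator, where the weight $y^{3-2s}$ just barely suffices since $s<2$.
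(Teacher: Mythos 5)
Your overall strategy is the same as the paper's: decompose $u^\lambda=w^\lambda+c_\lambda$ via Lemma \ref{le31.5}, use $\int_{\mathbb R^n}P_s(X,z)\,dz=1$ to extract $c_sc_\lambda$, and reduce to a uniform-in-$\lambda$ bound on $\int_{\partial B_1^{n+1}\cap\mathbb R^{n+1}_+}y^{3-2s}(w^\lambda)_e\,d\sigma$. Your far-field treatment ($|z|\geq 2$, paired with \eqref{31.4}) is exactly the paper's, and your asymptotics for the kernel $K$ are correct, including the blow-up $K(z)\sim|1-|z||^{3-2s}$ on the unit sphere when $s>3/2$.

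The gap is the final pairing of $K$ with $w^\lambda$ on $B_2$. ``Integrability of $K$'' combined with ``$w^\lambda\in L^p_{\mathrm{loc}}$ uniformly'' is a H\"older argument, and the exponents do not close on the whole relevant range of $(n,s)$. From $e^{u^\lambda}\in L^1(B_4)$ uniformly (Corollary \ref{cr31.1}) one only gets $w^\lambda\in L^p(B_2)$ for $p<\tfrac{n}{n-2s}$, which forces $K\in L^{p'}$ with $p'>\tfrac{n}{2s}$; since $K\in L^{q}$ near $\{|z|=1\}$ only for $q<\tfrac{1}{2s-3}$, this needs $2s\tfrac{n-1}{n}<3$ and fails, e.g., for $s=1.8$, $n=10$. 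Even upgrading to $e^{u^\lambda}\in L^p(B_4)$ for $p<\min\{\tfrac{n}{2s},\alpha^*\}$ via Proposition \ref{pr32.1} (which itself requires a rescaling computation you have not written) still fails for $s$ near $2$ and $n$ near $12$. The reason $\int_{B_2}|w^\lambda|K$ is nevertheless finite is not a product of Lebesgue norms but transversality of the singularities: $K$ blows up on a hypersurface with exponent $2s-3<1$, while $w^\lambda$ is a Riesz potential with point singularities. The paper exploits this by inserting $w^\lambda(z)=c(n,s)\int_{B_4}|z-\zeta|^{2s-n}e^{u^\lambda(\zeta)}\,d\zeta+O(1)$ from \eqref{31.5}, swapping the order of integration once more, and proving the uniform kernel bound \eqref{33.2}, i.e.\ $\sup_{\zeta}\int_{\partial B_1^{n+1}\cap\mathbb R^{n+1}_+}\int_{B_4}y^{3-2s}P_s(X,z)|z-\zeta|^{2s-n}\,dz\,d\sigma\leq C$ --- in effect showing that the convolution of $K$ with the Riesz kernel is bounded --- and then pairs with $\int_{B_4}e^{u^\lambda}\leq C$. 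You should replace the H\"older step by this convolution estimate; concretely, the average of $|1-|z||^{3-2s}$ over the sphere $\{|z-\zeta|=r\}$ is $O(r^{3-2s})$, so $\int_0^1 r^{n-1}\,r^{2s-n}\,r^{3-2s}\,dr=\int_0^1 r^2\,dr<\infty$ uniformly in $\zeta$.
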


\begin{proof}
Using the Poisson's formula we have
\begin{equation*}
\ou^\l(X)=\int_{\B}P_{s}(X,z)u^\l(z)dz=c_\l+\int_{\B}P_{s}(X,z)w^\l(z)dz.
\end{equation*}
It follows from \eqref{31.4} that
\begin{equation*}
\int_{\B\setminus B_2}P_{s}(X,z)|w^\l(z)|dz\leq C\quad\mbox{for}\quad |X|\leq 1.
\end{equation*}
Therefore,
\begin{align*}
\int_{\partial B^{n+1}_1\cap\r}y^{3-2s}\ou^\l(X)d\sigma
=c_sc_\l+O(1)+\int_{\partial B_1^{n+1}\cap\r}y^{3-2s}\int_{B_2}P_{s}(X,z)w^\l(z)dzd\sigma.
\end{align*}
We denote the last term in the above equation by $\Xi $. To estimate $\Xi $, we claim that
\begin{equation}
\label{33.2}
\int_{\partial B^{n+1}_1\cap\r}\int_{B_4}y^{3-2s}P_{s}(X,z)\frac{1}{|\zeta-z|^{n-2s}}dzd\sigma\leq C\quad \mbox{for every}~\zeta\in\B.
\end{equation}
Indeed, for $x\neq \zeta$ we set $R_0=\frac12|x-\zeta|$. Then we have
\begin{equation}
\label{33.3}
\begin{aligned}
&\int_{B_4}\frac{y^3}{|(x-z,y)|^{n+2s}|\zeta-z|^{n-2s}}dz\\
&\leq\left(\int_{B(\zeta,R_0)}+\int_{B_4\setminus  B(\zeta,R_0)}\right)\frac{y^3}{|(x-z,y)|^{n+2s}|\zeta-z|^{n-2s}}dz\\
&\leq C\frac{y^3}{(R_0+y)^{n+2s}}
\int_{B(\zeta,R_0)}\frac{1}{|\zeta-z|^{n-2s}}dz
+\frac{1}{R_0^{n-2s}}\int_{B_4\setminus  B(\zeta,R_0)}\frac{y^3}{|(x-z,y)|^{n+2s}}dz\\
&\leq C\left(\frac{y^3R_0^{2s}}{(R_0+y)^{n+2s}}
+\frac{y^{3-2s}}{R_0^{n-2s}}\right)\leq C\left(\frac{1}{R_0^{n-3}}+\frac{y^{3-2s}}{R_0^{n-2s}}\right).
\end{aligned}
\end{equation}
We use the stereo-graphic projection $(x,y)\to\xi$ from $\partial B_1^{n+1}\cap\R\to\mathbb{R}^n\setminus B_1$, i.e.,
$$(x,y)\to\xi=\frac{x}{1-y}.$$
Then $R_0=\frac12\left|\frac{2\xi}{1+|\xi|^2}-\zeta\right|$ and it follows that
\begin{equation}
\label{33.3333}
\int_{\partial B_1^{n+1}\cap\r}\frac{1}{R_0^{n-3}}d\sigma
\leq\int_{|\xi|\geq 1}\frac{1}{R_0^{n-3}}\frac{1}{(1+|\xi|^2)^n}d\xi\leq C,
\end{equation}
and
\begin{equation}
\label{33.4}
\begin{aligned}
\int_{\partial B_1^{n+1}\cap\r}\frac{y^{3-2s}}{R_0^{n-2s}}d\sigma
\leq \int_{|\xi|\geq1}\frac{y^{3-2s}}{R_0^{n-2s}}
\frac{1}{(1+|\xi|^2)^n}d\xi\leq C.
\end{aligned}
\end{equation}
From \eqref{33.3}-\eqref{33.4}, we proved \eqref{33.2}. As a consequence, we have
\begin{equation*}
\begin{aligned}
|\Xi |&\leq C+C\int_{\partial B^{n+1}_1\cap\r}y^{3-2s}\int_{|z|\leq 2}P_{s}(X,z)\int_{|\zeta|\leq 4}\frac{e^{u^\l(\zeta)}}{|z-\zeta|^{n-2s}}d\zeta dzd\sigma\\
&\leq C+C\int_{|\zeta|\leq 4}e^{u^\l(\zeta)}\int_{\partial B_{1}^{n+1}\cap\r}\int_{|z|\leq 2}y^{3-2s}P_{s}(X,z)\frac{1}{|z-\zeta|^{n-2s}}dzd\sigma d\zeta\\
&\leq C+C\int_{|\zeta|\leq 4}e^{u^\l(\zeta)}d\zeta\leq C,
\end{aligned}
\end{equation*}
where Corollary \ref{cr31.1} is used. Hence, the proof is completed.
\end{proof}

Let $w_e^\l$ stands for the extension of $w^\l$, from the above argument we can also see that $$\left|\int_{\partial B_r^{n+1}\cap\r}y^{3-2s}w_e^\l d\sigma\right| \leq C(r),$$ for some constant $C(r)$ independent of $\l$. In addition, we have the following conclusion. 
\begin{lemma}
\label{le33.12}
Let $w_e^\l$ denote the extension of $w^\l$, then for $\l\geq1$ and $r\geq1$ we have
\begin{equation}
\label{eq.33-1}
\int_{B_r^{n+1}\cap\r}y^{3-2s}|w_e^\l|^2dxdy \leq C(r),
\end{equation}
for some constant $C(r)$ depending only on $r.$
\end{lemma}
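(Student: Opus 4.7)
The plan is to derive an explicit integral representation for $w_e^\lambda$ in terms of $e^{u^\lambda}$ and then apply the earlier integral and $L^p$-estimates on $e^{u^\lambda}$. The key observation is that for $|X-\zeta|$ denoting the Euclidean distance in $\mathbb{R}^{n+1}_+$ between $X=(x,y)$ and $(\zeta,0)\in\partial\mathbb{R}^{n+1}_+$, the function $|X-\zeta|^{-(n-2s)}$ is $\Delta_b^2$-harmonic in $\mathbb{R}^{n+1}_+\setminus\{(\zeta,0)\}$ and satisfies $\lim_{y\to 0}y^b\partial_y |X-\zeta|^{-(n-2s)}=0$ away from $x=\zeta$, as can be checked from the identity $\Delta_b |X|^{-\alpha}=\alpha(\alpha+1-n-b)|X|^{-\alpha-2}$ applied successively with $\alpha=n-2s$ and $\alpha=n-2s+2$. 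Combining with Lemma \ref{le31.3} and the linearity (and uniqueness within the appropriate class) of the extension, this yields the formula
\begin{equation*}
w_e^\lambda(X)=c(n,s)\int_{\mathbb{R}^n}\Big[|X-\zeta|^{-(n-2s)}-(1+|\zeta|)^{-(n-2s)}\Big]e^{u^\lambda(\zeta)}d\zeta.
\end{equation*}

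Next, following the decomposition strategy used in the proof of Lemma \ref{le31.3}, for $X\in B_r^{n+1}\cap\mathbb{R}^{n+1}_+$ I would split the integral into the far-field $|\zeta|>4r$ and the near-field $|\zeta|\leq 4r$. On the far-field, the elementary estimate $\big||X-\zeta|^{-(n-2s)}-(1+|\zeta|)^{-(n-2s)}\big|\leq C(r)|\zeta|^{-(n-2s+1)}$, together with Lemma \ref{le31.2}, gives a uniform $L^\infty$-bound $C(r)$ on this contribution, whose weighted $L^2$-integral over $B_r^{n+1}\cap\mathbb{R}^{n+1}_+$ is itself $C(r)$. On the near-field, the $X$-independent contribution $c(n,s)\int_{|\zeta|\leq 4r}(1+|\zeta|)^{-(n-2s)}e^{u^\lambda(\zeta)}d\zeta$ is bounded by $C(r)$ via Lemma \ref{le31.1}, and it remains to control the Riesz-type potential
\begin{equation*}
I(X):=\int_{|\zeta|\leq 4r}|X-\zeta|^{-(n-2s)}e^{u^\lambda(\zeta)}d\zeta.
\end{equation*}

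To estimate $\int_{B_r^{n+1}\cap\mathbb{R}^{n+1}_+}y^{3-2s}I(X)^2dX$, I would apply Cauchy-Schwarz against the finite measure $e^{u^\lambda(\zeta)}\mathbf{1}_{\{|\zeta|\leq 4r\}}d\zeta$ and then interchange the order of integration, reducing matters to evaluating $\int_{B_r^{n+1}\cap\mathbb{R}^{n+1}_+}y^{3-2s}|X-\zeta|^{-2(n-2s)}dX$ for each fixed $\zeta$. A polar-coordinate calculation centered at $(\zeta,0)$ shows the integrand behaves like $\rho^{3+2s-n}$ in the radial variable, which is integrable at the origin precisely when $n<4+2s$, yielding the desired uniform bound in this range. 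In larger dimensions, Cauchy-Schwarz is instead replaced by H\"older's inequality with an exponent $p>n/(2s)$, using the higher $L^p_\mathrm{loc}$-integrability of $e^{u^\lambda}$ provided by Proposition \ref{pr32.1}.

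The main obstacle is exactly this large-dimension regime, where the singularity of $|X-\zeta|^{-(n-2s)}$ at $X=(\zeta,0)$ is too strong for the degenerate weight $y^{3-2s}$ alone to neutralize. Overcoming this obstacle relies essentially on the Moser-iteration-type higher integrability of $e^{u^\lambda}$ developed earlier in this section: once one has $e^{u^\lambda}\in L^p_\mathrm{loc}$ for some $p>n/(2s)$, H\"older's inequality delivers a uniform pointwise bound on $I(X)$ for $X\in B_r^{n+1}\cap\mathbb{R}^{n+1}_+$, from which the weighted $L^2$-bound on $w_e^\lambda$ follows immediately uniformly in $\lambda\geq 1$.
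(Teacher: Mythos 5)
Your representation formula and the treatment of the far field and of the low-dimensional regime are sound: the polar-coordinate computation showing that $\int_{B_r^{n+1}\cap\r}y^{3-2s}|X-\zeta|^{-2(n-2s)}\,dX$ converges exactly when $n<4+2s$ is correct, and combined with Cauchy--Schwarz against $e^{u^\lambda}d\zeta$ and Corollary \ref{cr31.1} it handles all $n<4+2s$ (this is in fact a slightly cleaner route than the paper's first case, which instead uses the pointwise bound of Lemma \ref{le32.5} for $2s<n\le 4s$).

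The large-dimension step, however, has a genuine gap. Proposition \ref{pr32.1} never yields a uniform-in-$\lambda$ bound on $\|e^{u^\lambda}\|_{L^p(B_{4r})}$ for any $p>\frac{n}{2s}$: the estimate that holds all the way down to the origin, \eqref{32.est-2}, is restricted to $p<\min\{\frac{n}{2s},\alpha^*\}$, and summing the annular bounds $\int_{B_{2\rho}\setminus B_\rho}e^{pu^\lambda}\le C\rho^{n-2ps}$ over dyadic annuli down to radius $\sim R/\lambda$ diverges like $\lambda^{2ps-n}$ precisely when $p>\frac{n}{2s}$ (moreover, for part of the relevant range, e.g.\ $n=12$, $s=3/2$, one has $\frac{n}{2s}=4>\alpha_s$, so such $p$ are not reached even on fixed annuli). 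Worse, the uniform pointwise bound on $I(X)$ that you want to extract is itself false in general: using only $\int_{B_\rho}e^{u^\lambda}\le C\rho^{n-2s}$, a dyadic decomposition gives
\begin{equation*}
I(0)\;\lesssim\;\sum_{k\ge 0} 2^{k(n-2s)}\int_{B_{2^{-k}}}e^{u^\lambda}\,d\zeta\;\lesssim\;\sum_{k\ge 0}C,
\end{equation*}
which is divergent, so no argument can produce an $L^\infty$ bound near $x=0$ from the available estimates. The repair is the one used in the paper: apply Cauchy--Schwarz with respect to the measure $|x-\zeta|^{2s-n}d\zeta$ rather than $d\zeta$, giving
$I(x)^2\le Cr^{2s}\int_{B_{4r}}|x-\zeta|^{2s-n}e^{2u^\lambda(\zeta)}\,d\zeta$, and then integrate in $X$ first via Fubini, so that only $\int_{B_{4r}}e^{2u^\lambda}\le C(r)$ is needed; this is exactly the $p=2$ case of Proposition \ref{pr32.1}(ii), available uniformly in $\lambda$ whenever $n>4s$, which covers the regime your argument misses.
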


\begin{proof}
From the proof of Lemma \ref{le33.1}, it is enough  to show that
\begin{equation}
\label{eq.33-2}
\int_{B_r^{n+1}\cap\r}y^{3-2s}\left(\int_{|z|\leq2r}P_s(X,z)\int_{|\zeta|\leq 4r}\frac{e^{u^\l(\zeta)}}{|\zeta-z|^{n-2s}}d\zeta dz\right)^2dxdy \leq C(r).
\end{equation}
For $x\neq \zeta$, following the arguments in \eqref{33.3} one have
\begin{equation*}
\begin{aligned}
\int_{|z|\leq 2r}\frac{y^{2s}}{|(x-z,y)|^{n+2s}|\zeta-z|^{n-2s}}dz\leq \frac{C}{|x-\zeta|^{n-2s}}.
\end{aligned}
\end{equation*}
As a consequence, we get
\begin{equation}
\label{eq.33-4}
\int_{B_r^{n+1}\cap\r}y^{3-2s}|w_e^\l|^2dxdy\leq C+C\int_{B_r^{n+1}\cap\r}y^{3-2s}\left(\int_{|\zeta|\leq 4r}\frac{e^{u^\l(\zeta)}}{|x-\zeta|^{n-2s}}d\zeta\right)^2dxdy.
\end{equation}
If $2\alpha^*s>4s\geq n>2s$,  from Lemma \ref{le32.5} and $u$ is locally bounded one can get
\begin{equation}
\label{eq.33-5}
\begin{aligned}
\int_{|\zeta|\leq 4r}\frac{e^{u^\l(\zeta)}}{|x-\zeta|^{n-2s}}d\zeta
\leq~& C\int_{\lambda^{-1}M\leq |\zeta|\leq 4r}\frac{1}{|\zeta|^{2s}|x-\zeta|^{n-2s}}d\zeta
+C\int_{ |\zeta|\leq \lambda^{-1}M}\frac{e^{u^\l(\zeta)}}{|x-\zeta|^{n-2s}}d\zeta\\
\leq ~& C(r)+C(M)\left(\frac{M}{\lambda}\right)^{2s}\lambda^{2s}\leq C(r),
\end{aligned}
\end{equation}
where $M$ is a fixed large constant. If $n>4s$, combined with Proposition \ref{pr32.1}-(ii) and $u\in C^{2\alpha}(\R)$, we have
$$\int_{B_r}e^{2u^\l}dx\leq Cr^{n-4s}.$$
From the H\"older's inequality with respect to the measure $\frac{d\zeta}{|x-\zeta|^{n-2s}}$ we get
\begin{equation}
\label{eq.33-6}
\begin{aligned}
\left(\int_{B_{4r}}\frac{e^{u^\l(\zeta)}}{|x-\zeta|^{n-2s}}d\zeta\right)^2
\leq\int_{B_{4r}}\frac{d\zeta}{|x-\zeta|^{n-2s}}\left(\int_{B_{4r}}
\frac{e^{2u^\l(\zeta)}}{|x-\zeta|^{n-2s}}d\zeta\right)
\leq Cr^{2s}\int_{B_{4r}}\frac{e^{2u^\l(\zeta)}}{|x-\zeta|^{n-2s}}d\zeta.
\end{aligned}
\end{equation}
Hence,
\begin{equation}
\label{eq.33-7}
\int_{B_r^{n+1}\cap\r}y^{3-2s}|w_e^\l|^2dxdy \leq C(r)+ C(r)\int_{B_{4r}}e^{2u^\l(\zeta)}
\int_{B_r^{n+1}\cap\r}y^{3-2s}\frac{1}{|x-\zeta|^{n-2s}}dxdyd\zeta
\leq C(r).
\end{equation}
Combining  \eqref{eq.33-4}-\eqref{eq.33-7} we get \eqref{eq.33-1} and it completes the proof.
\end{proof}

To estimate the first quadratic term in the monotonicity formula we need the following $L^2$ estimate for $(-\D)^\frac s2u^\lambda$.

\begin{lemma}
\label{le33.2}
We have
$$\int_{B_r}|(-\D)^\frac s2u^\lambda(x)|^2dx\leq Cr^{n-2s}\quad\text{for~ }r\geq1,~\ \lambda\geq1.$$
\end{lemma}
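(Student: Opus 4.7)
My plan is to base everything on the integral representation $v(x)=C\int_{\rn}\frac{e^{u(z)}}{|x-z|^{n-s}}\,dz$ already established in the proof of Lemma~\ref{le31.6} and to estimate it by the Hardy--Littlewood--Sobolev (HLS) inequality after a local/tail split. First I would eliminate the parameter $\lambda$ by rescaling: since $v^\lambda(x)=\lambda^s v(\lambda x)$, a change of variables gives
\begin{equation*}
\int_{B_r}|v^\lambda(x)|^2\,dx=\lambda^{2s-n}\int_{B_{\lambda r}}|v(y)|^2\,dy,
\end{equation*}
so it suffices to prove $\int_{B_R}|v|^2\,dy\leq CR^{n-2s}$ for every $R\geq 1$, and then apply this with $R=\lambda r\geq 1$.

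To bound $\int_{B_R}|v|^2$, I would split the representation formula as $v=v_{\mathrm{in}}+v_{\mathrm{out}}$, where $v_{\mathrm{in}}$ integrates over $B_{3R}$ and $v_{\mathrm{out}}$ over $\rn\setminus B_{3R}$. For $x\in B_R$ and $|z|\geq 3R$ we have $|x-z|\geq\tfrac{2}{3}|z|$, so
\begin{equation*}
|v_{\mathrm{out}}(x)|\leq C\int_{|z|\geq 3R}\frac{e^{u(z)}}{|z|^{n-s}}\,dz.
\end{equation*}
A dyadic decomposition combined with Lemma~\ref{le31.1} (which gives $\int_{|z|\leq r}e^u\leq Cr^{n-2s}$) yields $|v_{\mathrm{out}}(x)|\leq CR^{-s}$, and hence $\int_{B_R}|v_{\mathrm{out}}|^2\,dx\leq CR^{n-2s}$, which is the required bound.

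For $v_{\mathrm{in}}$ I would apply HLS with the sharp exponents $(p,2)$ satisfying $\tfrac{1}{2}=\tfrac{1}{p}-\tfrac{s}{n}$, i.e.\ $p=\tfrac{2n}{n+2s}\in(1,2)$. Then
\begin{equation*}
\|v_{\mathrm{in}}\|_{L^2(\rn)}\leq C\|e^u\|_{L^p(B_{3R})}=C\Big(\int_{B_{3R}}e^{pu}\,dz\Big)^{1/p}.
\end{equation*}
Under the standing hypothesis $n>2s$ one checks directly that $\tfrac{2n}{n+2s}<\tfrac{n}{2s}$, while $\alpha^*\approx 2.53>2>p$ gives $p<\alpha^*$; thus $p$ lies in the admissible range $[1,\min\{n/(2s),\alpha^*\})$ of Proposition~\ref{pr32.1}(ii), which, together with the local integrability of $e^u$ on $B_{2R}$, produces $\int_{B_{3R}}e^{pu}\leq CR^{n-2ps}$. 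Therefore
\begin{equation*}
\|v_{\mathrm{in}}\|_{L^2(\rn)}^2\leq CR^{2(n-2ps)/p}=CR^{\,2n/p-4s}=CR^{n-2s},
\end{equation*}
where the last equality is exactly the choice of $p$. Combining the two estimates concludes the proof.

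The only delicate point is the sharp exponent matching: HLS forces $p=\tfrac{2n}{n+2s}$ and produces the precise power $R^{n-2s}$ if and only if this $p$ lies strictly below $\min\{n/(2s),\alpha^*\}$. This is the reason one needs the improved $L^p$-estimate of Proposition~\ref{pr32.1} rather than Lemma~\ref{le31.1} alone, since the $L^1$ bound would land on the endpoint of HLS where the inequality fails.
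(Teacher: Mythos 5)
Your argument is correct, and for the main (local) term it takes a genuinely different route from the paper's proof. Both arguments begin identically: the scaling reduction to $\lambda=1$, the representation $(-\Delta)^{s/2}u=C\,|x|^{-(n-s)}*e^{u}$ from \eqref{31.7}, and a dyadic tail estimate via Lemma~\ref{le31.1} (your $v_{\mathrm{out}}$ is exactly the paper's $I_2$). For the remaining part, however, the paper works pointwise: it treats the stability ball separately using $u\in C^{2\alpha}$, and then bounds $I_1(x)=\int_{B_{2r}\setminus B_{2R}}|x-z|^{-(n-s)}e^{u(z)}dz$ by a case analysis --- for $2s<n\le 4s$ it invokes the pointwise decay $u\le-2s\log|x|+C$ of Lemma~\ref{le32.5}, while for $n>4s$ it uses Cauchy--Schwarz with respect to the measure $|x-z|^{-(n-s)}dz$ together with the $p=2$ estimate of Proposition~\ref{pr32.1}. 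You replace this dichotomy by one application of Hardy--Littlewood--Sobolev at the sharp exponent $p=\frac{2n}{n+2s}$, which lies in $\left(1,\min\{\tfrac{n}{2s},\alpha^*\}\right)$ precisely because $n>2s$, so Proposition~\ref{pr32.1} applies uniformly and the exponents close up exactly ($\tfrac{2n}{p}-4s=n-2s$). This buys a unified argument that never needs Lemma~\ref{le32.5}; the paper's route is more elementary (no HLS) and yields the intermediate pointwise decay $|I_1(x)|\le C|x|^{-s}$, which is of the same flavour as estimates reused in the subsequent extension lemmas. One bookkeeping caveat: your $R$ denotes both the rescaled radius $\lambda r$ and, implicitly, the stability radius; Proposition~\ref{pr32.1}(ii) only controls $\int_{B_{3R}\setminus B_{2R_0}}e^{pu}$ with $R_0$ the stability radius, and the leftover $\int_{B_{2R_0}}e^{pu}$ is a constant (continuity of $u$) absorbed into $CR^{n-2ps}$ since $n-2ps>0$ and $R\ge1$ --- which is what your remark about local integrability is doing, but it is worth stating explicitly.
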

\begin{proof}
By a scaling argument,  it suffices to prove the lemma for $\lambda=1$. It follows from \eqref{31.7} that
$$(-\D)^\frac s2u(x)=C\int_{\B}\frac{1}{|x-z|^{n-s}}e^{u(z)}dz,$$
in the sense of distribution. Let $R>0$ be such that $u$ is stable outside $B_R$. For $r\gg R$, we decompose $B_{r}=B_{4R}\cup (B_{r}\setminus B_{4R}).$  Since  $u\in C^{2\alpha}(\B)$,  we get
\begin{equation}
\label{33.5}
\int_{B_{4R}}|(-\D)^\frac{s}{2} u|^2dx\leq C(R).
\end{equation}
While for $4R<|x|<r$ we estimate
\begin{equation}
\label{33.6}
\begin{aligned}
|(-\D)^\frac s2u(x)| &\leq C\left(\int_{B_{2R}}+\int_{B_{2r}\setminus B_{2R}}+\int_{\B\setminus B_{2r}}\right)
\frac{1}{|x-z|^{n-s}}e^{u(z)}dz\\
&\leq  \frac{C}{|x|^{n-s}}+C\int_{B_{2r}\setminus B_{2R}}\frac{1}{|x-z|^{n-s}}e^{u(z)}dz+ C\int_{\B\setminus B_{2r}}
\frac{1}{|z|^{n-s}}e^{u(z)}dz \\
&=:C\left(\frac{1}{|x|^{n-s}}+I_1(x)+I_2 \right) .
\end{aligned}
\end{equation}
Using \eqref{31.1} we bound
\begin{align*}
I_2=\sum_{k=1}^\infty \int_{r2^k\leq|x|\leq r2^{k+1}}\frac{e^{u(z)}}{|z|^{n-s}}dz  \leq
C\sum_{k=1}^\infty \frac{(r2^{k+1})^{n-2s}}{(r2^k)^{n-s}}\leq \frac{C}{r^{s}}.
\end{align*}
Therefore,
\begin{equation}
\label{33.7}
\int_{B_r}I_2^2dx\leq Cr^{n-2s}.
\end{equation}
For the second term $I_1(x)$, if $2\alpha^*s>4s\geq n>2s$, by Lemma \ref{le32.5} we have
\begin{equation*}
I_1(x)\leq C\int_{2R\leq|z|\leq 2r}\frac{1}{|x-z|^{n-s}|z|^{2s}}dz\leq \frac{C}{|x|^s},
\end{equation*}
and hence
\begin{equation}
\label{33.8}
\int_{B_r\setminus B_{4R}}I_1^2(x)dx\leq C \int_{B_r\setminus B_{4R}}\frac{1}{|x|^{2s}}dx\leq Cr^{n-2s}.
\end{equation}
If $n>4s$,  by H\"older's inequality with respect to the measure $\frac{dz}{|x-z|^{n-s}}$  we get
\begin{align*}
\left(\int_{B_{2r}\setminus B_{2R}}\frac{e^{u(z)}}{|x-z|^{n-s}}dz\right)^2
\leq\int_{B_{2r}}\frac{dz}{|x-z|^{n-s}}\left(\int_{B_{2r}\setminus B_{2R}}\frac{e^{2u(z)}}{|x-z|^{n-s}}dz\right)
\leq Cr^s\int_{B_{2r}\setminus B_{2R}}\frac{e^{2u(z)}}{|x-z|^{n-s}}dz.
\end{align*}
Hence by Proposition \ref{pr32.1}, we get  that \eqref{33.8} also holds for $n>4s$.
Combining  \eqref{33.6}, \eqref{33.7} and  \eqref{33.8}, we deduce
\begin{equation}
\label{33.9}
\int_{B_r\setminus B_{4R}}|(-\D)^\frac{s}{2} u|^2dx\leq Cr^{n-2s}.
\end{equation}
The proof is completed following \eqref{33.5}, \eqref{33.9} and $n>2s.$
\end{proof}

We now apply the estimates in Lemma \ref{le33.2} to prove $L^2$ estimate for $\Delta_bu_e^\l$.
\begin{lemma}\label{le33.3}
We have
\begin{equation*}
\int_{B_r^{n+1}\cap \r}y^{3-2s}|\Delta_bu_e^\l(X)|^2dxdy\leq C(r),\quad \forall r>0,\,\l\geq1.
\end{equation*}
\end{lemma}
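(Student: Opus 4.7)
The plan is to mirror the strategy that was just used for $w_e^\l$ in Lemma \ref{le33.12}, the point being that $v_e^\l:=\Delta_b u_e^\l$ admits a representation formula analogous to the one for $w^\l$, with $e^{u^\l}$ playing the role of the source term.

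First I would identify the boundary trace of $v_e^\l$. Since $\Delta_b v_e^\l=0$ on $\r$ with Neumann-type data $\lim_{y\to 0}y^b\partial_y v_e^\l=C_{n,s}e^{u^\l}$, and since $b=3-2s=1-2(s-1)$ with $t:=s-1\in(0,1)$, the operator $\Delta_b$ is precisely the Caffarelli--Silvestre extension operator for $(-\Delta)^t$. Consequently $v_e^\l$ is the CS extension of its trace $f^\l:=v_e^\l|_{y=0}$, and the Neumann condition forces $(-\Delta)^t f^\l=c\,e^{u^\l}$, giving the Riesz representation
$$f^\l(x)=C\int_{\R^n}\frac{e^{u^\l(z)}}{|x-z|^{n-2s+2}}\,dz,$$
and therefore
$$v_e^\l(X)=\int_{\R^n}P_{t}(x-\zeta,y)f^\l(\zeta)\,d\zeta=C\int_{\R^n}K(X,z)\,e^{u^\l(z)}\,dz,$$
where $K(X,z):=\int_{\R^n}P_{t}(x-\zeta,y)|\zeta-z|^{-(n-2s+2)}d\zeta$ satisfies $K(X,z)\lesssim |X-z|^{-(n-2s+2)}$ by standard Poisson/Riesz convolution estimates.

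Next, I would estimate $\int_{B_r^{n+1}\cap\r}y^b|v_e^\l|^2$ by splitting the $z$-integral at $|z|=4r$, exactly as in the proof of Lemma \ref{le33.12}. For the far-field $|z|>4r$ and $(x,y)\in B_r^{n+1}\cap\r$, the kernel bound gives $K(X,z)\lesssim |z|^{-(n-2s+2)}$; then combining the decay $u^\l(z)\leq -2s\log|z|+C$ from Lemma \ref{le32.5} with the annular $L^1$ bound of Corollary \ref{cr31.1} produces a harmless $O_r(1)$ contribution after summing over dyadic annuli. For the local part $|z|\leq 4r$, I would follow the Hölder argument in Lemma \ref{le33.12}: either estimate the integral pointwise using $e^{u^\l(z)}\lesssim |z|^{-2s}$ off a small ball when $2s<n\leq 4s$ (Lemma \ref{le32.5}), or, when $n>4s$, apply Hölder with respect to the measure $\frac{dz}{|X-z|^{n-2s+2}}$ together with the $L^2$ estimate from Proposition \ref{pr32.1}(ii). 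Squaring and integrating against $y^b\,dx\,dy$ over $B_r^{n+1}\cap\r$ and swapping the order of integration reduces everything to the bound
$$\int_{B_r^{n+1}\cap\r}\frac{y^b}{|X-\zeta|^{n-2s+2}}\,dxdy\leq C(r),\quad \zeta\in B_{4r},$$
which follows from a standard splitting $B_r^{n+1}=B(\zeta,|X-\zeta|/2)\cup \text{complement}$ (as in the proof of Lemma \ref{le31.3}).

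The main obstacle I anticipate is justifying the representation formula for $v_e^\l$ in our class (namely that $v_e^\l$ really is the CS extension of its trace with no spurious polynomial terms). This will be handled by combining the decay of $\ss u^\l$ at infinity (hypothesis of Theorem \ref{main-th}) with the Liouville-type observation used in Lemma \ref{le31.5}: any polynomial harmonic correction would violate the integrability coming from Lemmas \ref{le31.3}, \ref{le31.6} and the vanishing condition on $\ss u$. Once the representation is in hand, the analytical estimates follow the pattern already established in Lemmas \ref{le31.3} and \ref{le33.12}.
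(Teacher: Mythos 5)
Your argument is essentially correct but follows a genuinely different route from the paper. The paper splits the \emph{source}: it writes $u^\l=u_1^\l+u_2^\l$, where $u_1^\l$ is the Newtonian-type potential of $\varphi e^{u^\l}$ (sources in $B_{4r}$) and $u_2^\l$ that of the far sources. For the near part it invokes the global Plancherel identity $\int_{\r}y^{3-2s}|\Delta_b\ou_1^\l|^2=C_{n,s}\int_{\B}|\ss u_1^\l|^2$, reducing matters to the boundary $L^2$ estimate of Lemma \ref{le33.2}; for the far part it proves $C^2$ bounds on $u_2^\l$ in $B_{3r/2}$ and estimates $\partial_y\ou_2^\l$, $\partial_y^2\ou_2^\l$, $\Delta_x\ou_2^\l$ directly by differentiating the Poisson kernel $P_s$ against $u_2^\l(z)-u_2^\l(x)-\nabla u_2^\l(x)\cdot(x-z)$. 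You instead represent $v_e^\l=\Delta_bu_e^\l$ itself as a potential of $e^{u^\l}$ with kernel $|X-z|^{-(n-2s+2)}$ and rerun the splitting of Lemma \ref{le33.12}. Your route is shorter and avoids the second-derivative Poisson-kernel computations; what it costs is the representation formula, which is the one step where your sketch is not yet adequate. Two remarks on that step. First, the Liouville argument you propose is aimed at the wrong object: the hypothesis of Theorem \ref{main-th} controls $\ss u$, not $\Delta u$ nor the trace of $\Delta_bu_e^\l$, and the possible corrections to $v_e^\l$ are $\Delta_b$-harmonic functions in $\r$ (e.g.\ multiples of $y^{2(s-1)}$ times harmonic functions), not boundary polynomials. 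The clean fix is available inside the paper: differentiate the already-established representation \eqref{31.7} twice (legitimate because $s>1$ makes $|x-z|^{-(n-2s+2)}$ locally integrable) to get $\Delta u^\l=-c\int|x-z|^{-(n-2s+2)}e^{u^\l(z)}dz$, and then identify $\Delta_bu_e^\l$ with the $(s-1)$-Poisson extension of $c\,\Delta u^\l$ either by computing $\Delta_bP_s$ explicitly from the formula $u_e^\l=\int P_s(\cdot,z)u^\l(z)dz$, or by uniqueness of the extension in the energy class $W^{2,2}(\r,y^b)$ guaranteed by Yang's theorem; the Neumann datum alone does not pin $v_e^\l$ down. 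Second, your kernel is more singular than the one in Lemma \ref{le33.12}: it produces $v_e^\l(X)\sim|X|^{-2}$ near the origin (in the regime $2s<n\le 4s$, via Lemma \ref{le32.5}), and one should note explicitly that $\int_{B_r^{n+1}\cap\r}y^{3-2s}|X|^{-4}\,dxdy\sim\int_0^r\rho^{\,n-2s-1}\,d\rho$ converges precisely because $n>2s$; with that observation, and the H\"older step for $n>4s$ exactly as in \eqref{eq.33-6}, the quantitative part of your argument closes.
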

\begin{proof}
We write
$$u^\l=u_1^\l+u_2^\l,$$ where
\begin{align*}
u^\l_1(x)&=c(n,s)\int_{\B}\left(\frac{1}{|x-z|^{n-2s}}
-\frac{1}{(1+|z|)^{n-2s}}\right)\varphi(z)e^{u^\l(z)}dz+c_\l,\\
u^\l_2(x)&=c(n,s)\int_{\B}\left(\frac{1}{|x-z|^{n-2s}}
-\frac{1}{(1+|z|)^{n-2s}}\right)(1-\varphi(z))e^{u^\l(z)}dz,
\end{align*}
for $\varphi\in C_c^\infty(B_{4r})$  is such that $\varphi=1$ in $B_{2r}$. As in the proof of Lemma \ref{le33.2} one can show that    $$\int_{\r}y^{3-2s}|\Delta_b\ou^\l_1(X)|^2dxdy=C_{n,s}\int_{\B}\left|\ss u^\l_1(x)\right|^2dx\leq C(r).$$
Here and in the following  $\ou_i^\l$ denotes the $s$-harmonic extension of $u_i^\l,~i=1,2$ through \eqref{maine} respectively. It remains to prove that
\begin{equation}
\label{33.11}
\int_{B_r^{n+1}\cap\r}y^{3-2s}|\Delta_b\ou^\l_2(x)|^2dxdy\leq C(r)\quad \mbox{for every}~r\geq 1,~\lambda\geq1.
\end{equation}
Following the  arguments of Lemma \ref{le31.2} and Lemma \ref{le31.3}, one could verify that
\begin{equation}
\label{33.12}
\|\nabla^2u_2^\l\|_{L^\infty(B_{3r/2})}\leq C(r),\quad
\|\nabla u_2^\l\|_{L^\infty(B_{3r/2})}\leq C(r),\quad
\int_{\B}\frac{|u_{2}^\l(x)|}{1+|x|^{n+2s}}dx\leq C(r),
\end{equation}
and consequently,
\begin{equation}
\label{33.13}
\|u_2^\l\|_{L^\infty(B_{3r/2})}\leq C(r).
\end{equation}
To prove \eqref{33.11}, we notice that
\begin{equation*}
y^{3-2s}|\Delta_b\ou^\l_2(X)|^2\leq C\left(y^{3-2s}|\Delta\ou^\l_2(X)|^2
+y^{1-2s}(\partial_y\ou^\l_{2})^2\right).
\end{equation*}
We first consider the term $y^{1-2s}(\partial_y\ou^\l_{2})^2$, it is known that
\begin{align*}
\partial_y\ou^\l_2(X)=~&\partial_y(\ou^\l_2(x,y)-u^\l_2(x))=\k\partial_y\int_{\B}\frac{y^{2s}}{|(x-z,y)|^{n+2s}}(u_2^\l(z)-u_2^\l(x)-\nabla u_2^\l(x)\cdot(x-z))dz\\
=~&\k\int_{\B}\partial_y\left(\frac{y^{2s}}{|(x-z,y)|^{n+2s}}\right)(u_2^\l(z)-u_2^\l(x)-\nabla u_2^\l(x)\cdot(x-z))dz,
\end{align*}
where we used
$$\k\int_{\B}\frac{y^{2s}}{|(x-z,y)|^{n+2s}}dz=1~\mathrm{and}~
\int_{\B}\frac{y^{2s}(x-z)}{|(x-z,y)|^{n+2s}}dz=0.$$
By \eqref{33.12},  for $|x|\leq r$ it holds that
\begin{equation}
\label{33.14}
\begin{aligned}
&\left|\int_{\B\setminus B_ {3r/2}}\partial_y\left(\frac{y^{2s}}{|(x-z,y)|^{n+2s}}\right) (u_2^\l(z)-u_2^\l(x)-\nabla u_2^\l(x)\cdot(x-z)) dz\right|\\
&\leq C(r)y^{2s-1}\int_{\B\setminus B_ {3r/2}}\frac{(|u_2^\l(z)|+|z|+1)}{1+|z|^{n+2s}}dz\leq C(r)y^{2s-1}.
\end{aligned}
\end{equation}
Using \eqref{33.12}-\eqref{33.13}, we see that
\begin{equation}
\label{33.15}
\begin{aligned}
&\int_{B_r^{n+1}\cap\r}y^{1-2s}\left(\int_{B_{3r/2}}\partial_y\left(\frac{y^{2s}}{|(x-z,y)|^{n+2s}}\right)(u_2^\l(z)-u_2^\l(x)-\nabla u_2^\l(x)\cdot(x-z))dz\right)^2dxdy\\
&\leq C(r)\|\nabla^2 u^\l_2\|_{L^\infty(B_{3r/2})}^2\int_{B_r^{n+1}\cap\r}y^{1-2s}
\left(\int_{B_{3r/2}}\partial_y\left(\frac{y^{2s}}{|(x-z,y)|^{n+2s}}\right)|x-z|^2dz\right)^2dxdy\\
&\leq C(r)\|\nabla^2 u^\l_2\|_{L^\infty(B_{3r/2})}^2\int_{B_r^{n+1}\cap\r}y^{3-2s}dxdy\leq C(r).
\end{aligned}
\end{equation}
By \eqref{33.14}-\eqref{33.15} we get
\begin{equation}
\label{33.16}
\begin{aligned}
\int_{B_r^{n+1}\cap\r}y^{1-2s}|\partial_y\ou^\l_2|^2dxdy
\leq C(r)\int_{B_r^{n+1}\cap\r}\left(y^{3-2s}+y^{2s-1}\right)dxdy\leq C(r).
\end{aligned}
\end{equation}
For the term  $y^{3-2s}\partial_y^2\ou_2^\l$, we have
\begin{equation*}
\partial_y^2\ou_2^\l=\k\int_{\B}\partial_y^2\left(\frac{y^{2s}}{|(x-z,y)|^{n+2s}}\right)(u_2^\l(z)-u_2^\l(x)-\nabla u_2^\l(x)\cdot(x-z))dz.
\end{equation*}
By \eqref{33.12}, for $|x|\leq r$ it holds that
\begin{equation}
\label{33.17}
\begin{aligned}
&\left|\int_{\B\setminus B_ {3r/2}}\partial_y^2\left(\frac{y^{2s}}{|(x-z,y)|^{n+2s}}\right) (u_2^\l(z)-u_2^\l(x)-\nabla u_2^\l(x)\cdot(x-z)) dz\right|\\
&\leq C(r)y^{2s-2}\int_{\B\setminus B_ {3r/2}}\frac{(|u_2^\l(z)|+|z|+1)}{1+|z|^{n+2s}}dz\leq C(r)y^{2s-2}.
\end{aligned}
\end{equation}
Based on \eqref{33.12} and \eqref{33.13}, we get
\begin{equation}
\label{33.18}
\begin{aligned}
&\int_{B_r^{n+1}\cap\r}y^{3-2s}\left(\int_{B_{3r/2}}\partial_y^2\left(\frac{y^{2s}}{|(x-z,y)|^{n+2s}}\right)(u_2^\l(z)-u_2^\l(x)-\nabla u_2^\l(x)\cdot(x-z))dz\right)^2dxdy\\
&\leq C(r)\|\nabla^2 u^\l_2\|_{L^\infty(B_{3r/2})}^2\int_{B_r^{n+1}\cap\r}y^{3-2s}
\left(\int_{B_{3r/2}}\partial_y^2\left(\frac{y^{2s}}{|(x-z,y)|^{n+2s}}\right)|x-z|^2dz\right)^2dxdy\\
&\leq C(r)\|\nabla^2 u^\l_2\|_{L^\infty(B_{3r/2})}^2\int_{B_r^{n+1}\cap\r}y^{3-2s}dxdy\leq C(r).
\end{aligned}
\end{equation}
From \eqref{33.17} and \eqref{33.18} we gain
\begin{equation}
\label{33.19}
\begin{aligned}
\int_{B_r^{n+1}\cap\r}y^{3-2s}|\partial_y^2\ou^\l_2|^2dxdy
\leq C(r)\int_{B_r^{n+1}\cap\r}\left(y^{3-2s}+y^{2s-1}\right)dxdy\leq C(r).
\end{aligned}
\end{equation}
For the terms $y^{3-2s}\partial^2_{x_i}\ou_2^\l,~i=1,\cdots,n$,  in a similar way of deriving \eqref{33.19} we get
\begin{equation}
\label{33.20}
\int_{B_r^{n+1}\cap\r}y^{3-2s}|\Delta_x\ou^\l_2|^2dxdy\leq C(r).
\end{equation}
Then \eqref{33.11} follows from \eqref{33.16}, \eqref{33.19} and \eqref{33.20}. This finishes the proof.
\end{proof}

We also need the following interpolation-type inequality in the study of monotonicity formula. 
\begin{lemma}
\label{le3.interpolation}
Let $w_e^\l$ be the extension of $w^\l$, then we have
\begin{equation*}
\begin{aligned}
\int_{\r\cap B_1^{n+1}}y^{3-2s}|\nabla w_e^\l|^2dxdy
\leq C\left(\int_{\r\cap B_2^{n+1}}y^{3-2s}|\Delta_b w_e^\l|^2dxdy
+\int_{\r\cap B_2^{n+1}}y^{3-2s}|w_e^\l|^2dxdy\right).
\end{aligned}
\end{equation*}
\end{lemma}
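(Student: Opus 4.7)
The plan is a standard Caccioppoli-type argument for the weighted operator $\Delta_b$ with $b=3-2s$. Fix a smooth cutoff $\eta\in C_c^\infty(B_{3/2}^{n+1})$ with $\eta\equiv 1$ on $B_1^{n+1}$, $0\le\eta\le 1$ and $|\nabla\eta|\le C$, and set $w:=w_e^\lambda$. Integrating $\mathrm{div}(y^b\nabla w)\cdot \eta^2 w$ by parts over $\r\cap B_{3/2}^{n+1}$ gives the identity
\begin{equation*}
\int_{\r}y^b|\nabla w|^2\eta^2\,dxdy
= -\int_{\r}\eta^2 w\,y^b\Delta_b w\,dxdy
 -2\int_{\r}y^b\eta w(\nabla w\cdot\nabla\eta)\,dxdy
 + I_\partial,
\end{equation*}
where $I_\partial=-\int_{\B}\lim_{y\to 0}(y^b\partial_y w)\,\eta^2 w\,dx$ is the flux along $\{y=0\}$, the outer boundary contribution vanishing by compact support of $\eta$.

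Applying Young's inequality to the two volume integrals and absorbing the resulting $\tfrac12\int y^b\eta^2|\nabla w|^2$ term back to the left yields, once $I_\partial$ is dispatched,
\begin{equation*}
\int_{\r\cap B_1^{n+1}}y^b|\nabla w|^2\,dxdy\le C\int_{\r\cap B_{3/2}^{n+1}}y^b\bigl(|\Delta_b w|^2+w^2\bigr)\,dxdy,
\end{equation*}
which is exactly the desired inequality since $B_{3/2}^{n+1}\subset B_2^{n+1}$.

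The main point requiring care is showing $I_\partial=0$ and legitimizing the integration by parts up to $\{y=0\}$, where the weight $y^b$ can be singular if $b=3-2s<0$. For the boundary condition, I would use that $u^\lambda=w^\lambda+c_\lambda$ by \eqref{31.7}, and that $u_e^\lambda$ satisfies $\lim_{y\to 0}y^b\partial_y u_e^\lambda=0$ from the second line of \eqref{maine}; since the extension of a constant is a constant, this transfers to $\lim_{y\to 0}y^b\partial_y w_e^\lambda=0$. To justify the integration by parts I would first carry it out on $\{y\ge\delta\}\cap B_{3/2}^{n+1}$, where all manipulations are classical, and then pass $\delta\downarrow 0$: the interior integrals converge by dominated convergence using $w_e^\lambda\in W^{2,2}(\r\cap B_2^{n+1},y^b)$, a bound available from the Poisson-kernel representation of $w_e^\lambda$ along the lines of Lemmas \ref{le33.12}--\ref{le33.3}, while the surface flux at $y=\delta$ tends to $I_\partial=0$ by the Neumann condition just noted.
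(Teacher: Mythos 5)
Your argument is correct in outline, but it is a genuinely different route from the paper's: the paper disposes of this lemma in one line by citing an interpolation inequality from Felli--Ferrero (\cite[Proposition 2.4]{ff}), whereas you give a self-contained Caccioppoli-type proof. Your identity and the absorption via Young's inequality are right (note that $|\int y^b\eta^2 w\,\Delta_b w|\le\frac12\int y^b\eta^2 w^2+\frac12\int y^b\eta^2|\Delta_b w|^2$, both controlled by the stated right-hand side), and your identification of the Neumann datum is sound: since $u_e^\lambda=w_e^\lambda+c_\lambda$ and the Poisson kernel $P_s$ integrates to $1$, the condition $\lim_{y\to0}y^b\partial_y u_e^\lambda=0$ from \eqref{maine} transfers to $w_e^\lambda$. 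Two technical points deserve more than a nod, and they are precisely what the black-boxed citation buys the authors. First, the absorption step requires the \emph{a priori} qualitative finiteness of $\int_{\r\cap B_{3/2}^{n+1}}y^b|\nabla w_e^\lambda|^2$; you correctly suggest extracting this from the Poisson representation as in Lemma \ref{le33.3}, and indeed the bound on $\int y^{1-2s}|\partial_y \ou_2^\lambda|^2$ proved there dominates the $y^{3-2s}$-weighted gradient integral for $y\le1$, so this can be made to work, but it should be carried out rather than asserted. Second, sending $\delta\downarrow0$ in the flux $\int_{\{y=\delta\}}\delta^b\partial_y w_e^\lambda\,\eta^2 w_e^\lambda\,dx$ needs more than the pointwise limit $\lim_{y\to0}y^b\partial_y w_e^\lambda=0$, because $w_e^\lambda(\cdot,\delta)$ need not be bounded (its trace $w^\lambda$ is only locally integrable); one needs some uniformity or an $L^1$-type decay of $y^b\partial_y w_e^\lambda$ against $w_e^\lambda$, again obtainable from the kernel representation. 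With those two verifications supplied, your proof stands and has the advantage of not relying on an external preprint; the paper's approach trades that self-containment for brevity.
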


\begin{proof}
This is a consequence of \cite[Proposition 2.4]{ff} and we omit the proof.
\end{proof}

Given the priori estimates, we now study asymptotic behavior of the energy functional in the monotonicity formula.

\begin{proposition}
\label{pr33.1}
We have $c_\lambda=O(1)$ for $\lambda\in [1,\infty)$. Moreover,
\begin{equation*}
\lim_{\l\to+\infty}E(\l,0,u_{e})=\lim_{\l\to\infty}E(1,0,u_e^\l)<+\infty.
\end{equation*}
\end{proposition}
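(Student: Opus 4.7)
The plan is to first establish the uniform bound $c_\lambda = O(1)$ for $\lambda \geq 1$, then verify that each term in $E(1,0,u_e^\lambda)$ is $O(1)$ uniformly in $\lambda$, and finally invoke the monotonicity of Theorem \ref{mono} to extract a finite limit. My strategy for bounding $c_\lambda$ is to identify it with a weighted average of $u_e^\lambda$ over a fixed surface. By Lemma \ref{le33.1}, $\int_{\partial B_1^{n+1}\cap\r} y^{3-2s} u_e^\lambda\, d\sigma = c_s c_\lambda + O(1)$, so it suffices to bound this surface integral uniformly in $\lambda$. Using the Poisson representation $u_e^\lambda(X) = \int_{\rn} P_s(X,z) u^\lambda(z)\,dz$ and Fubini, the task reduces to estimating $\int_{\rn} u^\lambda(z) K(z)\,dz$, where $K(z) := \int_{\partial B_1^{n+1}\cap\r} y^{3-2s} P_s(X,z)\, d\sigma_X$ decays like $(1+|z|)^{-(n+2s)}$.

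For the upper bound $c_\lambda \leq C$, I would apply Lemma \ref{le32.5} rescaled to $u^\lambda$, which gives $u^\lambda(x) \leq -2s\log|x| + C$ for $|x| \geq R_0/\lambda$. Splitting the integral of $u^\lambda K$ at $|z| = R_0/\lambda$: the inner region contributes $o(1)$ since its volume is $O(\lambda^{-n})$ while $u^\lambda$ there grows at most like $2s\log\lambda + C$ (using $u\in C^{2\alpha}$), and the outer region is controlled by $\int(-2s\log|z| + C)(1+|z|)^{-(n+2s)}\,dz < \infty$. The lower bound $c_\lambda \geq -C$ is the main obstacle, since no pointwise lower bound on $u$ is available a priori. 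My plan is to exploit interior regularity of the Caffarelli--Silvestre extension: since $u_e^\lambda$ solves the degenerate elliptic equation $\nabla\cdot(y^{1-s}\nabla u_e^\lambda) = 0$ on $\r$ with Muckenhoupt $A_2$ weight $y^{1-s}$, standard interior Harnack estimates apply on any ball compactly contained in $\r$. Combined with the upper bound just derived (which gives control of the weighted average of $u_e^\lambda$ near an interior point), this should furnish $u_e^\lambda \geq -C$ on an interior neighborhood, whence $\int_{\partial B_1^{n+1}\cap\r} y^{3-2s} u_e^\lambda\,d\sigma \geq -C$ and $c_\lambda \geq -C$ via Lemma \ref{le33.1}.

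With $c_\lambda = O(1)$ in hand, boundedness of each term in $E(1,0,u_e^\lambda)$ follows by assembling the preparatory estimates: the bulk term $\int_{B_1^{n+1}\cap\r} y^{3-2s}|\Delta_b u_e^\lambda|^2$ is controlled by Lemma \ref{le33.3}, the exponential boundary term by Corollary \ref{cr31.1}, the linear boundary term by Lemma \ref{le33.1} together with the bound on $c_\lambda$, and the radial-derivative and tangential-gradient boundary terms via the interpolation Lemma \ref{le3.interpolation} and Lemma \ref{le33.12}, combined with trace estimates in the $y^{3-2s}$-weighted Sobolev framework (the $\tfrac{d}{dr}$ derivatives additionally require smoothness of $\lambda \mapsto u_e^\lambda$ at $\lambda = 1$, which follows from the Poisson representation). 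Since the identity $E(\lambda,0,u_e) = E(1,0,u_e^\lambda)$ is a direct rescaling consequence, we obtain a uniform upper bound on $E(\lambda,0,u_e)$; combined with its nondecreasing character from Theorem \ref{mono}, the limit as $\lambda \to \infty$ exists and is finite.
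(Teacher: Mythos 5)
Your overall architecture (bound $c_\lambda$, then bound every term of $E(1,0,u_e^\lambda)$, then invoke monotonicity) matches the paper's, and your reduction of $c_\lambda$ to the weighted surface average via Lemma \ref{le33.1} is exactly how the paper uses that lemma. But the step you yourself flag as "the main obstacle" --- the lower bound $c_\lambda\ge -C$ --- is a genuine gap, and the Harnack idea does not close it. First, the extension $u_e^\lambda$ appearing in $E$ and in Lemma \ref{le33.1} solves the \emph{fourth-order} problem $\Delta_b^2u_e^\lambda=0$ with weight $y^{3-2s}$, not the second-order $A_2$-weighted equation $\nabla\cdot(y^{1-s}\nabla\cdot)=0$ (that is the extension $\ou$ with a different Poisson kernel, used only in the bootstrap subsection); there is no maximum principle or Harnack inequality available for $\Delta_b^2$. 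Second, even granting a Harnack inequality, it applies to nonnegative solutions: to use it you would write $M-u_e^\lambda\ge 0$ (from your upper bound) and conclude $M-\inf u_e^\lambda\le C\,(M-\sup u_e^\lambda)$ on a smaller ball, which converts a lower bound on $\inf u_e^\lambda$ into a lower bound on $\sup u_e^\lambda$ --- i.e.\ you still need to know that $u_e^\lambda$ is not uniformly very negative somewhere, which is precisely the missing information (note $\int_{B_1}e^{u^\lambda}=\lambda^{2s-n}\int_{B_\lambda}e^u$ tends to $0$, so no uniform positive lower bound on the mass is available to seed such an estimate). Third, the surface $\partial B_1^{n+1}\cap\r$ is not compactly contained in the open half-space, so interior estimates alone do not control the integral in Lemma \ref{le33.1}.

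The paper obtains the lower bound on $c_\lambda$ from the monotonicity formula itself, and this is the one idea your proposal is missing: since $E(1,0,u_e)\le E(\lambda,0,u_e)=E(1,0,u_e^\lambda)$ and every term of (an $r$-average of) $E(\cdot,0,u_e^\lambda)$ \emph{except} the one linear in $u_e^\lambda$ is bounded above uniformly in $\lambda$ by Lemmas \ref{le33.3}, \ref{le33.12}, \ref{le3.interpolation} and Proposition \ref{pr32.1}, while the linear term equals $4s(n+2-2s)c_sc_\lambda+O(1)$ with a \emph{positive} coefficient, one reads off $c_\lambda\ge -C$ directly. (The averaging $\int_1^2\int_t^{t+1}E(\gamma,\cdot)\,d\gamma\,dt$ is also how the paper disposes of the $\frac{d}{dr}$ terms, which are not bounded pointwise at $r=1$; your appeal to smoothness of $\lambda\mapsto u_e^\lambda$ does not give a bound uniform in $\lambda$.) Two smaller points: your upper bound on $c_\lambda$ via Lemma \ref{le32.5} requires $n<2\alpha^*s$, which fails for part of the admissible range (e.g.\ $s$ near $1$ and $n=8,9$); the paper instead combines $u^\lambda=w^\lambda+c_\lambda$, the local lower bound on $w^\lambda$ from the proof of Lemma \ref{le31.5}, and Corollary \ref{cr31.1} to get $c_\lambda\le C$ without that restriction.
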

\begin{proof}
Applying Theorem \ref{mono}, we know that $E(\l r,0,u_e)=E(r,0,u_e^\l) $ is nondecreasing in $r$. So,
\begin{equation}
\label{th-eq1}
E(1,0,u_e)\leq E(\l,0,u_e)=E(1,0,u_e^\l)
\leq \int_1^2E(t,0,u_e^\l)dt\leq\int_1^2\int_t^{t+1}E(\gamma,0,u_e^\l)d\gamma dt.
\end{equation}
From Proposition \ref{pr32.1} and Lemma \ref{le33.3},  we conclude that
\begin{equation}
\label{th-eq2}
\int_1^2\int_t^{t+1}\gamma^{2s-n}\left(\int_{\r\cap B_\gamma^{n+1}}\frac12y^{3-2s}|\Delta_bu_e^\l|dxdy-C_{n,s}\int_{\br\cap B_\gamma^{n+1}}e^{u_e^\l}dx\right)d\gamma dt\leq C,
\end{equation}
where $C>0$ is independent of $\lambda.$ In addition,
\begin{equation}
\label{th-eq3}
\begin{aligned}
&\int_1^2\int_t^{t+1}\frac{\gamma^3}{2}\frac{d}{d\gamma}\left(\gamma^{2s-3-n}\int_{\r\cap\partial B_\gamma^{n+1}}y^{3-2s}\left(\frac{\partial u_e^\l}{r}+\frac{2s}{\gamma}\right)^2d\sigma\right)d\gamma dt\\
&\leq\frac12\int_1^2(t+1)^{2s-n}\int_{\r\cap\partial B_{t+1}^{n+1}}y^{3-2s}\left(\frac{\partial u_e^\l}{r}+\frac{2s}{t+1}\right)^2d\sigma dt\\
&\quad -\frac12\int_1^2t^{2s-n}\int_{\r\cap\partial B_{t}^{n+1}}y^{3-2s}\left(\frac{\partial u_e^\l}{r}+\frac{2s}{t}\right)^2d\sigma dt\\
&\quad -\frac{3}{2}\int_1^2\int_{t}^{t+1}\left(\gamma^{2s-1-n}\int_{\r\cap\partial B_\gamma^{n+1}}y^{3-2s}\left(\frac{\partial u_e^\l}{r}+\frac{2s}{\gamma}\right)^2d\sigma\right)d\gamma dt\\
&\leq C\int_{\r\cap(B_3^{n+1}\setminus B_1^{n+1})}y^{3-2s}\left(\frac{\partial u_e^\l}{r}+\frac{2s}{r}\right)^2dxdy.
\end{aligned}
\end{equation}
Using Lemma \ref{le3.interpolation} (see \cite{ad} also for the classical case), we get
\begin{equation*}
\begin{aligned}
&\int_{\r\cap B_1^{n+1}}y^{3-2s}|\nabla u_e^\l|^2dxdy=\int_{\r\cap B_1^{n+1}}y^{3-2s}|\nabla w_e^\l|^2dxdy\\
&\leq C\left(\int_{\r\cap B_2^{n+1}}y^{3-2s}|\Delta_b w_e^\l|^2dxdy
+\int_{\r\cap B_2^{n+1}}y^{3-2s}|w_e^\l|^2dxdy\right)\\
&\leq C\left(\int_{\r\cap B_2^{n+1}}y^{3-2s}|\Delta_b u_e^\l|^2dxdy
+\int_{\r\cap B_2^{n+1}}y^{3-2s}|w_e^\l|^2dxdy\right)\leq C.
\end{aligned}
\end{equation*}
Here, Lemma \ref{le33.3} and Lemma \ref{le33.12} are used. Therefore, the right-hand side of \eqref{th-eq3} is bounded. Similarly, the other terms in the monotonicity formula \eqref{energy} can be estimated. Therefore,  \eqref{th-eq1} yields
\begin{equation*}
E(1,0,u_e)\leq E(1,0,u_e^\l)\leq 4s(n+2-2s)c_sc_\l+O(1),
\end{equation*}
which implies $c_\l$ is bounded from below for $\l\geq1.$ Using \eqref{31.7},  we have
\begin{equation*}
u^\l=w^\l+c_\l.
\end{equation*}
It is not difficult to see that $w^\l$ is locally bounded from below by the proof of Lemma \ref{le31.5}, then we conclude from Corollary \ref{cr31.1} that $c_\l\leq C$ for $\l\geq 1$. Thus,   $c_\l$ is bounded. Lemma \ref{le33.1} implies that
\begin{equation*}
\int_{\partial B^{n+1}_1\cap\r}y^{3-2s}u_e^\l(X)d\sigma=O(1).
\end{equation*}
As a result,  from \eqref{th-eq1} we conclude that $E(1,0,u_e^\l)$ is bounded uniformly in $\l$. This completes the proof.
\end{proof}

We end this part with an estimate that is used in the  proof of Theorem \ref{main-th}.
\begin{lemma}
\label{le33.6}
For every $r>1$ and $\l\geq1$,  we have
\begin{equation*}
\int_{B_r^{n+1}\cap\r}y^{3-2s}(|u^\l_e|^2+|\Delta_b u^\l_e|^2)dxdy\leq C(r).
\end{equation*}		
\end{lemma}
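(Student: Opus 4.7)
The plan is to decompose $u_e^\lambda$ using the representation already established and then reduce everything to estimates that have been proved earlier in this section. The bound on the second summand $\int y^{3-2s}|\Delta_b u_e^\lambda|^2$ is already contained in Lemma \ref{le33.3}, so the only new work concerns the weighted $L^2$-norm of $u_e^\lambda$ itself.

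First, I would invoke Lemma \ref{le31.5} to write $u^\lambda = w^\lambda + c_\lambda$ on $\mathbb{R}^n$. Since constants are $s$-harmonic (their extension through \eqref{maine} is the constant itself), uniqueness of the Poisson extension yields
\begin{equation*}
u_e^\lambda(X) = w_e^\lambda(X) + c_\lambda \quad \text{on } \mathbb{R}^{n+1}_+ .
\end{equation*}
By Proposition \ref{pr33.1} the scalar $c_\lambda$ is uniformly bounded for $\lambda \geq 1$, so applying the elementary inequality $(a+b)^2 \leq 2a^2 + 2b^2$ gives
\begin{equation*}
\int_{B_r^{n+1}\cap \mathbb{R}^{n+1}_+} y^{3-2s}|u_e^\lambda|^2 \, dxdy \leq 2\int_{B_r^{n+1}\cap \mathbb{R}^{n+1}_+} y^{3-2s}|w_e^\lambda|^2 \, dxdy + 2c_\lambda^2 \int_{B_r^{n+1}\cap \mathbb{R}^{n+1}_+} y^{3-2s}\, dxdy.
\end{equation*}

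Next I would handle the two terms on the right-hand side separately. The first is controlled directly by Lemma \ref{le33.12}, which gives $\int_{B_r^{n+1}\cap \mathbb{R}^{n+1}_+} y^{3-2s}|w_e^\lambda|^2\, dxdy \leq C(r)$ uniformly in $\lambda \geq 1$. For the second, since $1 < s < 2$ we have $3 - 2s > -1$, so the weight $y^{3-2s}$ is locally integrable and the elementary computation
\begin{equation*}
\int_{B_r^{n+1}\cap \mathbb{R}^{n+1}_+} y^{3-2s}\, dxdy \leq C r^{n+4-2s}
\end{equation*}
together with the uniform bound on $|c_\lambda|$ gives a constant depending only on $r$.

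Finally I would combine this estimate with Lemma \ref{le33.3} to produce the claimed bound on $\int y^{3-2s}(|u_e^\lambda|^2 + |\Delta_b u_e^\lambda|^2)$. There is no essential obstacle here: the lemma is really a bookkeeping consequence of the representation formula \eqref{31.7}, the uniform control of $c_\lambda$ obtained in Proposition \ref{pr33.1}, and the prior Lemmas \ref{le33.12} and \ref{le33.3}. The only point requiring care is verifying that the extension operation commutes with the constant shift, which follows from linearity of the Poisson extension.
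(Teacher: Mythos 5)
Your proposal is correct and follows exactly the route the paper takes: the paper's own proof is a one-line citation of Lemma \ref{le33.12}, Lemma \ref{le33.3}, and the uniform boundedness of $c_\l$ from Proposition \ref{pr33.1}, and your argument simply fills in the routine details (the decomposition $u_e^\l = w_e^\l + c_\l$, linearity of the extension, and local integrability of the weight $y^{3-2s}$ since $3-2s>-1$).
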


\begin{proof}
This is a direct consequence of Lemma \ref{le33.12}, Lemma \ref{le33.3} and $c_\l$ is bounded uniformly for $\l\geq1$.
\end{proof}

\subsection{Proof of the main  result}

\begin{proof}[Proof of Theorem \ref{main-th}.]
Let  $u$  be  finite Morse index solution to \eqref{main} for some  $n>2s$ satisfying  \eqref{stability}. Let $R>1$ be such  that $u$   is stable outside the ball  $B_R$. From Lemma \ref{le33.6} we obtain that there exists a sequence $\lambda_i\to+\infty$ such that $u_e^{\lambda_i}$ converges weakly in $\dot{H}^2_{\mathrm{loc}}(\overline\r,y^{3-2s}dxdy)$ to a function $u_e^\infty$. In addition, we have $u_e^{\l_i}\to u_e^\infty$ almost everywhere. To show that $u^\infty$, the restriction of $u_e^\infty$ in $\rn$, satisfies \eqref{main} in the weak sense, we need to verify the following. First, we need to show that for any $\varepsilon>0$ there exists $r\gg1$ such that
\begin{equation}
\label{4.1}
\int_{\B\setminus B_r}\frac{|u^\l|}{1+|z|^{n+2s}}dz<\varepsilon,\quad\forall\lambda\geq 1.
\end{equation}
Indeed,  as in the proof of Lemma \ref{le31.3}, we get
\begin{equation*}
\begin{aligned}
\int_{\B\setminus B_r}\dfrac{|u^\l(x)|}{1+|x|^{n+2s}}dx\leq~&
C\int_{\B\setminus B_r}\dfrac{c_\lambda}{1+|x|^{n+2s}}dx
+C\int_{\mathbb{R}^n\setminus B_{r_0}}e^{u^\l(z)}|E(z)|dz\\
&+C\int_{\B\setminus B_r}\int_{B_{r_0}}
\frac{e^{u^\l(z)}}{|x-z|^{n-2s}(1+|x|^{n+2s})}dzdx\\
&+C\int_{\B\setminus B_r}\int_{B_{r_0}}
\frac{e^{u^\l(z)}}{((1+|z|)^{n-2s})(1+|x|^{n+2s})}dzdx\\
\leq ~& Cr^{-2s}+Cr_0^{-1/2}+Cr^{-2s}r_0^{n-2s}.
\end{aligned}
\end{equation*}
We could first choose $r_0$ large enough such that $Cr_0^{-1/2}\leq\varepsilon/2$ and then choose $r$ such that $Cr^{-2s}r_0^{n-2s}+Cr^{-2s}\leq\varepsilon/2$. Then \eqref{4.1} is proved. As a consequence, we could show that $u^\infty\in L_s(\B)$, and for any $\varphi\in C_c^\infty(\B)$
\begin{equation*}
\lim\limits_{i\to\infty}\int_{\B}u^{\lambda_i}\s\varphi dx=\int_{\B}u^{\infty}\s\varphi dx.
\end{equation*}
The second point we need to prove is that $e^{u^{\lambda_i}}$ converges to $e^{u^{\infty}}$ in $L_{\mathrm{loc}}^1(\B).$ By \eqref{32.est-1} we can easily see that $e^{u^{\lambda_i}}$ is uniformly integrable in $L_{\mathrm{loc}}^1(\B\setminus\{0\})$. Using  \eqref{32.est-2}, around the origin we get
\begin{equation*}
\int_{B_{\varepsilon}}e^{u^{\lambda_i}}dx=\lambda_i^{2s-n}\int_{B_{\l_i\varepsilon}}e^{u}dx\leq C\varepsilon^{n-2s}.
\end{equation*}
Therefore, we have $e^{u^{\lambda_i}}$ is uniformly integrable in $L_{\mathrm{loc}}^1(\B)$,  and together with $u^{\lambda_i} \to u^\infty$ a.e., we get for any $\varphi\in C_c^\infty(\B)$
\begin{equation*}
\lim\limits_{i\to\infty}\int_{\B}e^{u^{\lambda_i}}\varphi dx=\int_{\B}e^{u^{\infty}}\varphi dx.
\end{equation*}
Then $u^\infty$ is a weak solution to the equation of \eqref{main}. Now  we show that the limit function $u_e^\infty$ is homogenous, and is of the form $-2s\log r+\tau(\theta)$.  Based on the above convergence, we get for any $r>0$,
\begin{equation}
\label{4.r}
{\lim_{i\to\infty}E(\l_ir,0,u_e)~\ \mbox{is independent of}~\ r}.
\end{equation}
Indeed, for any two positive numbers $r_1 <r_2$ we have
\begin{equation*}
\lim_{i\to\infty}E(\l_ir_1,0,u_e)\leq\lim_{i\to\infty}E(\l_ir_2,0,u_e).
\end{equation*}
On the other hand, for any $\lambda_i$, we can choose $\lambda_{m_i}$ such that $\{\lambda_{m_i}\}\subset\{\lambda_i\}$ and $\lambda_i r_2\leq \lambda_{m_i} r_1$. As a consequence, we have
\begin{equation*}
\lim_{i\to\infty}E(\l_ir_2,0,u_e)\leq\lim_{i\to\infty}E(\l_{m_i}r_1,0,u_e)=\lim_{i\to\infty}E(\l_ir_1,0,u_e).
\end{equation*}
This finishes the proof of \eqref{4.r}. Using \eqref{4.r} we see that for $R_2>R_1>0,$
\begin{equation*}
\begin{aligned}
0=~&\lim_{i\to\infty}E(\l_iR_2,0,u_e)-\lim_{i\to+\infty}E(\l_iR_1,0,u_e)\\
=~&\lim_{i\to\infty}E(R_2,0,u_e^{\l_i})-\lim_{i\to\infty}E(R_1,0,u_e^{\l_i})\\
\geq~&C(n,s)\lim_{i\to\infty}\inf\int_{\left(B_{R_2}^{n+1}\setminus B_{R_1}^{n+1}\right)\cap\r}y^{3-2s}r^{2s-2-n}\left(\frac{\partial u_e^{\l_i}}{\partial r}+\frac{2s}{r}\right)^2dxdy\\
\geq~&C(n,s)\int_{\left(B_{R_2}^{n+1}\setminus B_{R_1}^{n+1}\right)\cap\r}y^{3-2s}r^{2s-2-n}\left(\frac{\partial u_e^\infty}{\partial r}+\frac{2s}{r}\right)^2dxdy.
\end{aligned}
\end{equation*}
Notice that in the last inequality we only used the weak convergence of $u_e^{\l_i}$
to $u_e^\infty$ in $H^2_{\mathrm{loc}}(\overline\r,y^{3-2s}dxdy)$. So,
$$\frac{\partial u_e^\infty}{\partial r}+\frac{2s}{r}=0\quad \mbox{a.e. in}\quad \R.$$
Thus we proved the claim. In addition, $u^\infty$ is also stable because the stability condition for $u^{\l_i}$ passes to the limit. Then,  from Theorem \ref{th3.1} we get a contradiction  to \eqref{1.condition}. This completes the proof.
\end{proof}

\begin{remark} It is expected that when (\ref{1.condition}) does not hold that is when
$$\frac{ \Gamma^2(\frac{n+2s}{4})  }{ \Gamma^2(\frac{n-2s}{4}) } \geq \frac{\Gamma(\frac{n}{2}) \Gamma(1+s)}{ \Gamma(\frac{n-2s}{2})}, $$
there exist radial entire stable solutions for \eqref{main}.   The method of construction of such solutions is the one that is applied in \cite{ddw,dggw} and references therein.
\end{remark}

\begin{remark} Consider the case of lower dimensions $n=2s$ that is connected with the conformally invariant equations. When $s=1$ that gives $n=2$, Farina in \cite{fe} constructed radially symmetric solutions which are stable outside a compact set.  For the case of $s=2$ that implies $n=4$, we refer interested readers to  \cite{lin} and references therein where the existence of radial solutions are discussed.
For the case of $s=\frac{3}{2}$ that gives $n=3$, we refer to  \cite{jmmx}.  The arguments in order to show that radially symmetric solutions are stable outside a compact set rely on the Hardy-type inequalities.

\end{remark}

\section{The  fourth order Gelfand-Liouville equation}\label{secfo}
In this section, similar to the case of $1<s<2$, we develop a monotonicity formula for the fourth order (local) Gelfand-Liouville equation
\begin{equation}\label{6.main2}
\Delta^{2} u= e^u \ \ \text{in} \ \ \mathbb{R}^n,
\end{equation}
via technical rescaling arguments. As a  consequence of this formula, we classify finite Morse index solutions applying blow-down analysis arguments and classifications of homogenous solutions. Our proof is structured in variational methods and it is different from the one developed by Dupaigne et al. in \cite{dggw}, see also \cite{ceg} and references therein. In the latter a Kato's inequality and Moser iteration arguments are applied.

\begin{thm}\label{6.mainth}
If $5 \le n\le 12$, then there is no solution $u\in C^{4}(\mathbb R^n)$ of \eqref{6.main2} which is stable outside a compact set and $\Delta u$ vanishes at infinity.
\end{thm}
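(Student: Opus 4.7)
The plan is to mirror the fractional case with $s=2$ in the local setting, replacing the extension equation $\Delta_b^2 u_e = 0$ by the biharmonic equation itself, which should actually simplify several steps. I would introduce an energy functional $E(\lambda,x_0,u)$ whose expression is the formal $s\to 2^-$ limit of the one in Section 2: the bulk term becomes $\lambda^{4-n}\bigl[\frac{1}{2}\int_{B(x_0,\lambda)}(\Delta u)^2\,dx - \int_{B(x_0,\lambda)} e^u\,dx\bigr]$, and the boundary corrections on $\partial B(x_0,\lambda)$ involve $(\partial_r u + 4/r)$ and $|\nabla u|^2 - |\partial_r u|^2$ with weight one (no $y^{3-2s}$ factor). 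Setting $v = \Delta u$, $u^\lambda(x) = u(\lambda x) + 4\log\lambda$ and $v^\lambda(x) = \lambda^2 v(\lambda x)$ so that $\Delta u^\lambda = v^\lambda$ and $\Delta v^\lambda = e^{u^\lambda}$, the scaling identities $\lambda\, du^\lambda/d\lambda = r\partial_r u^\lambda + 4$, $\lambda\, dv^\lambda/d\lambda = r\partial_r v^\lambda + 2v^\lambda$ and the same integration-by-parts manipulations carried out in Theorem 2.1 will yield the analogous monotonicity inequality
\[
\frac{dE(\lambda,x_0,u)}{d\lambda}\;\ge\; C(n)\,\lambda^{2-n}\!\int_{\partial B(x_0,\lambda)}\!\Bigl(\partial_r u + \frac{4}{\lambda}\Bigr)^{\!2}\,d\sigma,
\]
with $C(n)=2(n-3)>0$ in the range $n\ge 5$.

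Next I would classify homogeneous solutions $u = \tau(\theta) - 4\log r$ by following the argument of Theorem 3.1. Testing the equation against radially symmetric $\varphi \in C_c^\infty(\mathbb{R}^n)$ and invoking the identity $\Delta^2\log(1/|x|^4) = A_{n,2}/|x|^4$, with $A_{n,2} = 8(n-2)(n-4)$, forces $\int_{\mathbb{S}^{n-1}} e^{\tau(\theta)}\,d\theta = A_{n,2}|\mathbb{S}^{n-1}|$. Testing stability with $\psi(x) = |x|^{-(n-4)/2}\eta_\epsilon(|x|)$ and invoking the sharp Hardy--Rellich inequality $\int(\Delta\psi)^2 \ge \frac{n^2(n-4)^2}{16}\int \psi^2/|x|^4\,dx$, combined with $\int_0^\infty r^{-1}\eta_\epsilon^2\,dr = \log(2/\epsilon)+O(1)$, extracts the leading logarithmic coefficient and yields $\frac{n^2(n-4)^2}{16}|\mathbb{S}^{n-1}| \ge A_{n,2}|\mathbb{S}^{n-1}|$. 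This inequality fails precisely for $5\le n\le 12$ (since $n_0(2)\approx 12.56$ is the largest root of $n^2(n-4)-128(n-2)$), so in our range no stable homogeneous solution exists.

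For the a priori estimates I would follow Section 4.1 with simplifications afforded by locality. Stability applied to $\eta_R(x)\varphi(x/r)$ immediately gives $\int_{B_r}e^{u^\lambda}\le Cr^{n-4}$ uniformly in $\lambda\ge 1$. Using the Green function of $\Delta^2$ on $\mathbb{R}^n$ (recall $n\ge 5$) together with the hypothesis $\Delta u\to 0$ at infinity --- which is precisely what kills the polynomial harmonic ambiguity and forces the representation to close --- yields
\[
u^\lambda(x) \;=\; c_n\!\int_{\mathbb{R}^n}\!\Bigl(\frac{1}{|x-z|^{n-4}}-\frac{1}{(1+|z|)^{n-4}}\Bigr)e^{u^\lambda(z)}\,dz + c_\lambda,
\]
analogous to Lemma 4.5. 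A Moser-type bootstrap in the variational form of Lemmas 4.7--4.9 (iterating jointly on $u$ and $v = \Delta u$ through the Cauchy--Schwarz inequality applied to $\int e^u v^{2\alpha-1}\varphi^2$ and $\int e^{\alpha u} v\varphi^2$) then upgrades integrability, giving $\int_{B_{2r}\setminus B_r} e^{p u^\lambda}\,dx\le Cr^{n-4p}$ for some $p>1$, and hence the pointwise upper bound $u(x)\le -4\log|x|+C$ and the $L^2$ bounds on $\Delta u^\lambda$ needed to keep every term in $E(1,0,u^\lambda)$ uniformly bounded.

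Combining these, $c_\lambda$ is bounded and $E(\lambda,0,u) = E(1,0,u^\lambda)$ converges as $\lambda\to\infty$. Extracting a weakly convergent subsequence $u^{\lambda_i} \rightharpoonup u^\infty$ in $H^2_{\mathrm{loc}}(\mathbb{R}^n)$ and using monotonicity together with $\lim_i E(\lambda_i r,0,u)$ being independent of $r$ (by a squeezing argument between two scales) forces $\partial_r u^\infty + 4/r\equiv 0$, i.e.\ $u^\infty(x) = \tau(\theta) - 4\log r$ for some $\tau\in L^1(\mathbb{S}^{n-1})$. Since stability passes to the weak limit, $u^\infty$ is a stable homogeneous solution, contradicting the previous step in the range $5\le n\le 12$. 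The main obstacle I anticipate is the Moser iteration itself: although the local fourth-order case avoids the Caffarelli--Silvestre extension, one must still couple the iterations on $u$ and $v$ carefully, since the stability inequality controls $(\Delta\phi)^2$ rather than $|\nabla^2\phi|^2$, and the range of exponents where the joint iteration closes is again governed by a polynomial condition analogous to $X^3-8X+4$, which has to remain compatible with the dimension restriction $n\le 12$.
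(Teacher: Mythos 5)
Your proposal is correct and follows essentially the same route as the paper: the authors prove Theorem \ref{6.mainth} by specializing their variational scheme to $s=2$, namely a monotonicity formula with constant $2(n-3)$ (Theorem \ref{6.mono}), the classification of stable homogeneous solutions via the Hardy--Rellich constant $\tfrac{n^2(n-4)^2}{16}$ versus $A_{n,2}=8(n-2)(n-4)$ (Theorem \ref{6.thmhom}), the representation formula closed by the hypothesis $\Delta u\to 0$ at infinity, the Crandall--Rabinowitz/Moser iteration governed by $X^3-8X+4$, and the blow-down to a stable homogeneous limit. All of the ingredients you list, including your anticipated difficulty with coupling the iterations on $u$ and $v=\Delta u$, correspond to the paper's Lemmas \ref{6.cr31.1}--\ref{6.le33.6} and the final contradiction argument.
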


Note that in lower dimension $n=4$ and higher dimensions $n\ge 13$ it is known that there are solutions of \eqref{6.main2} which are stable outside a compact set, see \cite{bffg}. We refer to \cite{dggw,bffg,ceg,gg} and references therein for more information about the fourth order problem (\ref{6.main2}) and to \cite{ff1} for the case of polyharmonic equation.  The notion of stability for the local problem (\ref{6.main2}) is as what follows.

 \begin{defn}
We say that a solution $u$ of (\ref{6.main2}) is stable outside a compact set  if there exists $R>0$ such that
\begin{equation}\label{6.stability}
 \int_{\mathbb R^n} e^u \phi^2 dx\le \int_{\mathbb R^n}  |\Delta \phi|^2dx,
\end{equation}
 for any $\phi\in C_c^\infty(\mathbb R^n\setminus \overline {B_{R}})$.
 \end{defn}

Following the same proof of Lemma \ref{le32.1}, we could derive that for $t\in[0,1],$
\begin{equation*}
\int_{\mathbb{R}^n}e^{tu}\phi^2dx\leq\int_{\rn}| (-\Delta)^{t} \phi|^2dx,\quad \forall \phi\in C_c^\infty(\rn\setminus \overline {B_{R}}).
\end{equation*}
In particular, for $t=\frac12,$
\begin{equation*}
\label{6.half-stability}
\int_{\mathbb{R}^n}e^{\frac12u}\phi^2dx\leq\int_{\rn}|\nabla\phi|^2dx,\quad \forall \phi\in C_c^\infty(\rn\setminus \overline {B_{R}}).
\end{equation*}
\medskip

Consider homogeneous solutions of the form $u(r,\theta)= -4 \log (r) + \omega(\theta),$
when $\omega:\mathbb S^{n-1}\to\mathbb R$.  Here, we follow the arguments in \cite{fw,hy,ddww} that is inspired by the tangent cone analysis of Fleming \cite{fle}. By the Hardy-Relich inequality the following classification is sharp. We omit the proof.  

\begin{thm}\label{6.thmhom}
Let $n>4$ and $u\in W^{2,2}_{\mathrm{loc}}(\mathbb R^n\setminus \{0\})$ be a stable homogenous solution of \eqref{6.main2} in $\mathbb R^n\setminus \{0\}$. Assume that $e^u\in L^1_{\mathrm{loc}}(\mathbb R^n\setminus \{0\})$. Then,
\begin{equation}\label{6.nroot}
\frac{n^2(n-4)^2}{16} \ge 8 (n-2)(n-4).
\end{equation}
\end{thm}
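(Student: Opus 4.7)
The plan is to follow the same strategy used for Theorem \ref{th3.1}, but with the Hardy-Rellich inequality replacing the fractional Hardy inequality. I would write the homogeneous stable solution as $u(r,\theta)=-4\log r+\omega(\theta)$ with $\omega:\b\to\mathbb R$, and proceed in two steps.

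First, I would derive an integral identity for $\omega$. Testing the equation $\Delta^2u=e^u$ against any radially symmetric $\varphi\in C_c^\infty(\mathbb R^n\setminus\{0\})$ yields
\[
\int_{\mathbb R^n}e^u\varphi\,dx=\int_{\mathbb R^n}u\,\Delta^2\varphi\,dx.
\]
Because $\Delta^2\varphi$ is radial, separating variables gives
\[
\int_{\mathbb R^n}\omega(\theta)\,\Delta^2\varphi\,dx=\Big(\int_{\b}\omega\,d\sigma\Big)\int_0^\infty(\Delta^2\varphi)(r)\,r^{n-1}\,dr=0,
\]
the last equality coming from $\int_{\mathbb R^n}\Delta^2\varphi\,dx=0$ by two integrations by parts. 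Combined with the direct computation $\Delta^2(-4\log|x|)=8(n-2)(n-4)/|x|^4$ and a polar change of variables, this forces
\[
\int_{\b}e^{\omega(\theta)}\,d\theta=8(n-2)(n-4)\,|\b|.
\]

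Second, I would insert the Hardy-Rellich near-extremal $\phi(x)=r^{-(n-4)/2}\eta_\ep(r)$ into the stability inequality \eqref{6.stability}, where $\eta_\ep$ is the standard cutoff from the proof of Theorem \ref{th3.1}. A direct computation using $\Delta(r^{-(n-4)/2})=-\tfrac{n(n-4)}{4}r^{-n/2}$ together with $\Delta(f\eta_\ep)=\eta_\ep\Delta f+2\nabla f\cdot\nabla\eta_\ep+f\Delta\eta_\ep$ gives
\[
\int_{\mathbb R^n}|\Delta\phi|^2\,dx=\frac{n^2(n-4)^2}{16}\,|\b|\int_0^\infty r^{-1}\eta_\ep^2(r)\,dr+O(1)
\]
as $\ep\to 0$, with the $O(1)$ collecting the cross terms $2f'\eta_\ep'$ and $f\Delta\eta_\ep$, which are supported in the thin annuli $\{r\sim\ep\}\cup\{r\sim\ep^{-1}\}$ and scale to contribute only $O(1)$. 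Meanwhile the right-hand side of \eqref{6.stability} reads
\[
\int_{\mathbb R^n}e^u\phi^2\,dx=\int_{\b}e^{\omega}\,d\theta\cdot\int_0^\infty r^{-1}\eta_\ep^2(r)\,dr,
\]
and $\int_0^\infty r^{-1}\eta_\ep^2\,dr$ diverges like $\log(1/\ep)$. Matching the divergent logarithmic coefficients yields
\[
\frac{n^2(n-4)^2}{16}\,|\b|\ \ge\ \int_{\b}e^{\omega}\,d\theta=8(n-2)(n-4)\,|\b|,
\]
which is precisely \eqref{6.nroot}.

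The main (and essentially only) obstacle is the bookkeeping in the Hardy-Rellich calculation: one must verify that every term involving $\eta_\ep'$ or $\eta_\ep''$ contributes at most $O(1)$ rather than a logarithmic divergence. This is a standard but slightly delicate annular integration by parts, entirely analogous to the fractional computation carried out in the derivation of \eqref{3.exp-3} and in \cite[Lemma 4.1]{fall}; with it the theorem follows at once.
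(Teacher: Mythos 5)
Your argument is correct and is exactly the route the paper intends (the paper omits this proof, pointing to the Hardy--Rellich analogue of the computation in Theorem \ref{th3.1}): the identity $\int_{\mathbb S^{n-1}}e^{\omega}\,d\theta=8(n-2)(n-4)|\mathbb S^{n-1}|$ obtained from radial test functions, combined with the near-extremal test function $r^{-(n-4)/2}\eta_\ep$ in the stability inequality and matching of the $\log(1/\ep)$ coefficients, yields \eqref{6.nroot}. The key computations $\Delta^2(-4\log|x|)=8(n-2)(n-4)|x|^{-4}$ and $\Delta\bigl(r^{-(n-4)/2}\bigr)=-\tfrac{n(n-4)}{4}r^{-n/2}$ check out, and the cutoff error terms supported on the annuli $\{r\sim\ep\}\cup\{r\sim\ep^{-1}\}$ are indeed $O(1)$.
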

The inequality (\ref{6.nroot}) implies that $n\ge 13$ that is desired.  We now develop a monotonicity formula for solutions of (\ref{6.main2}). Define
\begin{eqnarray*}\label{6.energy}
E(r,x_0,u)& :=& r^{4-n} \left[   \int_{   B(x_0,r)} \frac{1}{2}  |\Delta u |^2dx-   \int_{   B(x_0,r)} e^{u} dx  \right]
-2r^{ 3-n } \int_{   \partial B(x_0,r)}
 \left(  \frac{\partial u}{\partial r} +\frac{4}{r}\right)^2d\sigma
\\&&
+\frac{1}{2}  r^3      \frac{d}{dr} \left[  r^{ 1-n }    \int_{   \partial B(x_0,r)}  \left(  \frac{\partial u}{\partial r} +\frac{4}{r}    \right )^2  d\sigma   \right]
-8\left(2-n \right)  r^{1-n} \int_{   \partial B(x_0,r)}  \left( u + 4  \log r \right)d\sigma
\\&&
-4\left(2-n \right) r^{  2 -n }  \int_{  \partial B(x_0,r)}     \left(  \frac{\partial u}{\partial r} +\frac{4}{r}    \right)d\sigma + \frac{1}{2}    \frac{d}{dr} \left[ r^{4-n}  \int_{   \partial B(x_0,r)} \left(  | \nabla u|^2 - \left|\frac{\partial u}{\partial r}\right|^2 \right)d\sigma \right]
\\&&+ \frac{1}{2}    r^{3-n}  \int_{   \partial B(x_0,r)}  \left(  | \nabla u|^2 - \left |  \frac{\partial u}{\partial r} \right|^2 \right) d\sigma.
\end{eqnarray*}
For the above energy functional, we have the following result. The proof is similar to the one given for Theorem \ref{mono} and we omit the details.
\begin{thm}\label{6.mono}
Assume that $n>3$. Then, $E(\lambda,x_0,u)$ is a nondecreasing function of $\lambda>0$. Furthermore,
\begin{equation*}
\frac{dE(\lambda,x_0,u)}{d\lambda} \ge 2(n-3)\  \lambda^{2-n}    \int_{\partial B(x_0,\lambda)}  \left( \frac{\partial u}{\partial r} +  \frac{4}{ \lambda}  \right)^2d\sigma.
\end{equation*}
\end{thm}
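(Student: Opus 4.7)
The plan is to mirror the derivation used for Theorem \ref{mono}, but in the absence of the weight $y^{b}$ and the extra spatial dimension. Define the rescaled solution $u^{\lambda}(x):=u(\lambda x)+4\log \lambda$ and set $v^{\lambda}:=\Delta u^{\lambda}=\lambda^{2}(\Delta u)(\lambda x)$. Because $4$ is precisely the scaling exponent, $u^{\lambda}$ solves $\Delta^{2}u^{\lambda}=e^{u^{\lambda}}$, and the bulk part of $E$ obeys
\begin{equation*}
\bar{E}(u,\lambda):=\lambda^{4-n}\left[\int_{B_{\lambda}}\tfrac{1}{2}|\Delta u|^{2}\,dx-\int_{B_{\lambda}}e^{u}\,dx\right]=\bar{E}(u^{\lambda},1).
\end{equation*}
Differentiating in $\lambda$ and integrating by parts twice, using $\Delta v^{\lambda}=e^{u^{\lambda}}$, collapses the bulk contribution to yield
\begin{equation*}
\frac{d\bar{E}(u^{\lambda},1)}{d\lambda}=\int_{\partial B_{1}}\left(v^{\lambda}\,\partial_{r}\tfrac{du^{\lambda}}{d\lambda}-\partial_{r}v^{\lambda}\cdot\tfrac{du^{\lambda}}{d\lambda}\right)d\sigma,
\end{equation*}
the direct analogue of \eqref{energy2}.

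Next, I would exploit the two scaling identities $\lambda\,du^{\lambda}/d\lambda=r\,\partial_{r}u^{\lambda}+4$ and $\lambda\,dv^{\lambda}/d\lambda=r\,\partial_{r}v^{\lambda}+2v^{\lambda}$, obtained by differentiating the definitions of $u^{\lambda}$ and $v^{\lambda}$ in $\lambda$, to express every $r$-derivative of $u^{\lambda}$ and $v^{\lambda}$ on $\partial B_{1}$ in terms of $\lambda$-derivatives. Decomposing the Laplacian in polar coordinates, on $\partial B_{1}$ one finds
\begin{equation*}
v^{\lambda}=\lambda^{2}\frac{d^{2}u^{\lambda}}{d\lambda^{2}}+(n-1)\lambda\frac{du^{\lambda}}{d\lambda}+4(2-n)+\Delta_{\mathbb{S}^{n-1}}u^{\lambda},
\end{equation*}
so the parameters $\alpha:=n-1$ and $\beta:=4(2-n)$ play the role of their counterparts in the proof of Theorem \ref{mono}. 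Substituting this expansion into the boundary integral and invoking the same algebraic identities for $\lambda^{2}(du^{\lambda}/d\lambda)(d^{2}u^{\lambda}/d\lambda^{2})$, $\lambda\,d^{2}u^{\lambda}/d\lambda^{2}$ and $\lambda^{3}(du^{\lambda}/d\lambda)(d^{3}u^{\lambda}/d\lambda^{3})$ used there, I would split $d\bar{E}/d\lambda$ into three groups: (i) a manifestly non-negative piece $2\lambda^{3}(d^{2}u^{\lambda}/d\lambda^{2})^{2}+2(\alpha-2)\lambda(du^{\lambda}/d\lambda)^{2}$, whose coefficient $\alpha-2=n-3$ is strictly positive for $n>3$; (ii) perfect $\lambda$-derivatives which, after the rescalings $\lambda^{2}(du^{\lambda}/d\lambda)^{2}\leftrightarrow\lambda^{2-n}\!\int_{\partial B_{\lambda}}(\partial_{r}u+4/r)^{2}d\sigma$ and their analogues, reconstruct precisely the non-bulk terms in the definition of $E(\lambda,x_{0},u)$; and (iii) a spherical-gradient piece coming from $\Delta_{\mathbb{S}^{n-1}}u^{\lambda}$ which, after integration by parts on $\mathbb{S}^{n-1}$, contributes the remaining $|\nabla u|^{2}-|\partial_{r}u|^{2}$ derivatives in the last two lines of $E$ (the extra non-negative term $2\lambda|\nabla_{\mathbb{S}^{n-1}}(du^{\lambda}/d\lambda)|^{2}$ being dropped to produce the stated inequality).

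Matching these three groups to the definition of $E(\lambda,x_{0},u)$ then gives
\begin{equation*}
\frac{dE(\lambda,x_{0},u)}{d\lambda}\geq 2(n-3)\,\lambda^{2-n}\int_{\partial B(x_{0},\lambda)}\left(\frac{\partial u}{\partial r}+\frac{4}{\lambda}\right)^{2}d\sigma,
\end{equation*}
which is the desired monotonicity. The main obstacle is the bookkeeping step: ensuring that after substituting the polar decomposition of $v^{\lambda}$, the many boundary integrals pair up exactly into the perfect $\lambda$-derivatives of the quantities appearing in $E$, with no uncancelled remainder beyond the positive term above. Once this matching is carried out line by line (following the proof of Theorem \ref{mono} with the simplifications $b=0$ and trivial weight $\theta_{1}^{b}\equiv 1$), both the monotonicity in $\lambda$ and the explicit lower bound with constant $2(n-3)$ follow.
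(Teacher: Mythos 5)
Your proposal is correct and follows essentially the same route the paper intends: the paper omits the proof of this theorem precisely because it is the adaptation of Theorem \ref{mono} to the local setting, and your rescaling $u^{\lambda}=u(\lambda\cdot)+4\log\lambda$, the reduction of $d\bar E/d\lambda$ to a boundary integral, the polar decomposition with $\alpha=n-1$, $\beta=4(2-n)$, and the resulting coefficient $2(\alpha-2)=2(n-3)$ under $n>3$ all match the fractional computation with $2s$ replaced by $4$. The only cosmetic remark is that the simplification is not literally ``$b=0$'' but rather the absence of the extension variable altogether, which your actual computations already reflect.
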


\subsection{Blow-Down Analysis and Proof of Theorem \ref{6.mainth} }\label{sec5.1}

As discussed in section 2, the rescaled function
$$u^\lambda(x)=u(\l x)+4\log\l$$
is a family of solutions to \eqref{6.main2} and is stable outside a compact set if $u$ is stable outside a compact set. Here is a direct consequence of the stability inequality, the proof follows from the same argument of Lemma \ref{le31.1}
\begin{lemma}
\label{6.cr31.1}	
Suppose $n>4$ and $u$ is a solution of \eqref{6.main2} which is stable outside a compact set. Then there exists $C$ such that
\begin{equation*}
\int_{B_r}e^{u^\l}dx\leq Cr^{n-4},\quad \forall \l\geq1,~r\geq1.
\end{equation*}
\end{lemma}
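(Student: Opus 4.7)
The proof mirrors that of Lemma \ref{le31.1}, with the second-order stability functional $\int|\Delta\phi|^2$ replacing the fractional bilinear form. The plan is first to establish $\int_{B_r}e^u\,dx \leq Cr^{n-4}$ for the original solution $u$ and then recover the uniform-in-$\lambda$ statement for $u^\lambda$ by a simple change of variables.

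I would fix $R$ so that $u$ is stable outside $B_R$ and, just as in Lemma \ref{le31.1}, build a product cut-off $\psi(x)=\eta_R(x)\varphi(x/r)$ where $\eta_R\in C^\infty(\B)$ kills a neighborhood of $B_R$ (vanishing on $B_R$, equal to $1$ off $B_{2R}$) and $\varphi\in C_c^\infty(\B)$ truncates at scale one (equal to $1$ on $B_1$, vanishing off $B_2$). For $r\geq 2R$ one has $\psi\in C_c^\infty(\B\setminus\overline{B_R})$ and $\psi\equiv 1$ on $B_r\setminus B_{2R}$, so the stability inequality \eqref{6.stability} yields
\[
\int_{B_r\setminus B_{2R}}e^u\,dx \;\leq\; \int_{\B}e^u\psi^2\,dx \;\leq\; \int_{\B}|\Delta\psi|^2\,dx.
\]

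Next I would bound the right-hand side by $Cr^{n-4}$. Since $\Delta\psi$ is supported on two essentially disjoint annuli, this is a direct computation: on $B_{2R}\setminus B_R$ all derivatives of $\psi$ are bounded by constants depending only on $R$, giving an $O(1)$ contribution; on $B_{2r}\setminus B_r$ one has $\eta_R\equiv 1$ and $\Delta\psi(x)=r^{-2}(\Delta\varphi)(x/r)$, which after $y=x/r$ contributes $r^{n-4}\int|\Delta\varphi|^2\,dy$. Combining with the trivial local bound $\int_{B_{2R}}e^u\,dx\leq C$ from $u\in C^4$ gives $\int_{B_r}e^u\,dx\leq Cr^{n-4}$ for $r\geq 2R$, and hence for all $r\geq 1$ since $n>4$.

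Finally, the rescaling $u^\lambda(x)=u(\lambda x)+4\log\lambda$ together with the substitution $y=\lambda x$ transforms the inequality into
\[
\int_{B_r}e^{u^\lambda(x)}\,dx \;=\; \lambda^{4-n}\int_{B_{\lambda r}}e^{u(y)}\,dy \;\leq\; \lambda^{4-n}\cdot C(\lambda r)^{n-4} \;=\; Cr^{n-4},
\]
uniformly for $\lambda\geq 1$ and $r\geq 1$. No substantial obstacle arises; the fourth-order case is in fact somewhat cleaner than the fractional one, since $|\Delta\psi|^2$ is local and avoids the Hardy-type cancellation arguments needed in \eqref{3.exp-1}.
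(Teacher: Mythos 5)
Your proof is correct and follows exactly the route the paper intends: it is the local analogue of Lemma \ref{le31.1} (test the stability inequality \eqref{6.stability} with $\psi=\eta_R(x)\varphi(x/r)$, whose $\Delta\psi$ is supported on two annuli contributing $O(1)+O(r^{n-4})$), followed by the same rescaling $y=\lambda x$ used for Corollary \ref{cr31.1}. No gaps.
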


We now consider the fourth order equation as a system
\begin{equation*}
\begin{cases}
-\Delta u = v &\mathrm{in} \ \ \mathbb R^n,
\\
-\Delta v = e^u &\mathrm{in} \ \ \mathbb R^n,
\end{cases}
\end{equation*}
and apply the Moser iteration type arguments initiated by Crandall and Rabinowitz in \cite{CR}. The following estimates are given in \cite{dggw}.

\begin{lemma} \label{6.le32.4}
Let $u$ be the solution of \eqref{6.main2} with \eqref{6.stability} holds. If there exists a constant $C$ depending only on $\alpha$ such that for $r\geq1$ large enough, we have
\begin{equation}
\label{6.32.condition}
\int_{B_{2r}\setminus B_r}(e^{\alpha u}+v^{2\alpha})dx\leq Cr^{n-4\alpha },\quad  \forall r\geq2R.
\end{equation}
Then
\begin{equation}
\label{6.32.conclusion}
\int_{B_{2r}\setminus B_r}(e^{\frac{n}{n-2}\alpha u}+v^{\frac{2n}{n-2}\alpha})dx\leq Cr^{n-\frac{4n}{n-2}\alpha },\quad \forall r\geq 3R,
\end{equation}
where $\alpha\in(\alpha^\sharp,\min\{\frac{n}{4},\alpha^*\}).$ Here, $\alpha^\sharp,\alpha^*$ denote the largest two roots of the polynomial $X^3-8X+4$.

\end{lemma}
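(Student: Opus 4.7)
The plan is to adapt the Moser iteration scheme of Lemma \ref{le32.3} and Lemma \ref{le32.4} to the present local fourth order setting (cf.\ \cite{dggw, CR}). Working directly on $\mathbb R^n$ rather than on an extension simplifies matters: the weighted energy $\int y^{1-s}|\nabla\cdot|^2$ is replaced by $\int|\nabla\cdot|^2$, and the Sobolev gain is the classical factor $n/(n-2)$. The starting point is the half-stability inequality $\int e^{u/2}\phi^2\leq \int|\nabla\phi|^2$ valid for $\phi\in C_c^\infty(\mathbb R^n\setminus\overline{B_R})$, which follows by complex interpolation exactly as in Lemma \ref{le32.1} from the endpoints $\int e^u\phi^2\leq \int|\Delta\phi|^2$ (the stability condition) and $\int\phi^2\leq\int\phi^2$, using Plancherel's identity $\|(-\Delta)^{1/2}\phi\|_{L^2}=\|\nabla\phi\|_{L^2}$.

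Next I would derive two Moser-type inequalities. Testing $-\Delta v=e^u$ against $v^{2\alpha-1}\varphi^2$ for $\varphi\in C_c^\infty(\mathbb R^n\setminus\overline{B_R})$, integrating by parts and manipulating exactly as in the proof of Lemma \ref{le32.2}, produces
\begin{equation*}
\frac{\sqrt{2\alpha-1}}{\alpha}\|\nabla(v^\alpha\varphi)\|_{L^2}
\leq \|e^{u/2}v^{\alpha-1/2}\varphi\|_{L^2}+C\|v^\alpha\nabla\varphi\|_{L^2},
\end{equation*}
while testing $-\Delta u=v$ against $e^{\alpha u}\varphi^2$ yields
\begin{equation*}
\frac{2}{\sqrt{\alpha}}\|\nabla(e^{\alpha u/2}\varphi)\|_{L^2}
\leq \|e^{\alpha u/2}v^{1/2}\varphi\|_{L^2}+C\|e^{\alpha u/2}\nabla\varphi\|_{L^2}.
\end{equation*}
Setting $H=\|\nabla(e^{\alpha u/2}\varphi)\|_{L^2}$ and $K=\|\nabla(v^\alpha\varphi)\|_{L^2}$, I would then decompose $e^u v^{2\alpha-1}=e^{u/2}\cdot e^{u/2}v^{2\alpha-1}$ and apply H\"older's inequality with exponents $(2\alpha,\,2\alpha/(2\alpha-1))$ together with the half-stability to obtain $\int e^u v^{2\alpha-1}\varphi^2\leq H^{1/\alpha}K^{(2\alpha-1)/\alpha}$ and symmetrically $\int v e^{\alpha u}\varphi^2\leq H^{(2\alpha-1)/\alpha}K^{1/\alpha}$. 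Substituting back and multiplying the two Moser inequalities produces the quadratic inequality $\delta XY\leq aY+bX+ab$, where $X=K^{1/(2\alpha)}H^{1-1/(2\alpha)}$, $Y=K^{1-1/(2\alpha)}H^{1/(2\alpha)}$, $a=C\|e^{\alpha u/2}\nabla\varphi\|_{L^2}$, $b=C\|v^\alpha\nabla\varphi\|_{L^2}$, and $\delta=2\sqrt{2\alpha-1}/(\alpha\sqrt{\alpha})-1>0$ precisely when $\alpha\in(\alpha^\sharp,\alpha^*)$. A case split on whether $\delta X\gtrless 2a$ forces either $X\leq Ca$ or $Y\leq Cb$, and reinjecting the winning alternative into the other Moser inequality gives $\max\{H,K\}\leq C(a+b)$.

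Finally, with a cutoff $\varphi\in C_c^\infty(B_{3r}\setminus B_{2r/3})$ satisfying $\varphi\equiv1$ on $B_{2r}\setminus B_r$ and $|\nabla\varphi|\leq C/r$, the hypothesis \eqref{6.32.condition} (summed over the finitely many dyadic annuli inside $B_{3r}\setminus B_{2r/3}$) gives $a^2+b^2\leq Cr^{-2}\cdot r^{n-4\alpha}=Cr^{n-4\alpha-2}$. The classical Sobolev embedding $\|w\|_{L^{2n/(n-2)}}^2\leq C\|\nabla w\|_{L^2}^2$ applied to $v^\alpha\varphi$ and to $e^{\alpha u/2}\varphi$ then yields
\begin{equation*}
\int_{B_{2r}\setminus B_r}\left(v^{\frac{2n\alpha}{n-2}}+e^{\frac{n\alpha}{n-2}u}\right)dx
\leq C r^{\frac{n(n-4\alpha-2)}{n-2}}=C r^{n-\frac{4n\alpha}{n-2}},
\end{equation*}
which is exactly \eqref{6.32.conclusion}, since a direct computation gives $n(n-4\alpha-2)/(n-2)=n-4n\alpha/(n-2)$. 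I expect the main obstacle to be the algebraic closing of the two coupled estimates on $H$ and $K$: because the cross terms intertwine $e^u$ with powers of $v$, neither inequality closes on its own, and the condition $\delta>0$ that forces $\alpha\in(\alpha^\sharp,\alpha^*)$ is precisely the constraint emerging from this coupling, while the upper bound $\alpha<n/4$ is what keeps $n-4\alpha$ positive so that \eqref{6.32.condition} still encodes a genuine decay rate.
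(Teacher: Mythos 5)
Your proposal is correct and is exactly the Moser-iteration scheme the paper itself carries out in detail for the fractional case (Lemmas \ref{le32.1}--\ref{le32.4}): half-stability by complex interpolation, the two coupled Caccioppoli-type inequalities for $H$ and $K$, the H\"older/half-stability coupling giving $\delta XY\le aY+bX+ab$ with $\delta>0$ iff $\alpha\in(\alpha^\sharp,\alpha^*)$, and the Sobolev step producing the gain $n/(n-2)$; the paper gives no proof of this particular lemma, citing \cite{dggw}, where the same argument appears. The only implicit point worth recording is that $v=-\Delta u\ge 0$ (which follows from $-\Delta v=e^u>0$ and the decay of $\Delta u$ at infinity) is needed for the non-integer powers $v^{\alpha}$, $v^{2\alpha-1}$ to make sense, but this is already presupposed by the hypothesis \eqref{6.32.condition}.
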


Define the following parameter that depends on the dimension $n$,
\begin{equation}
\bar\alpha=\max\left\{\frac{n}{n-2}\min\left\{\frac{n}{4},\alpha^*\right\},~\min\left\{\frac{n}{4},\alpha^*\right\}+\frac12\right\}.
\end{equation}
Summarizing the previous estimates, we reach the following conclusion for $L^p$ estimates of $e^u$ for certain $p$.

\begin{lemma}
\label{6.pr32.1}
Let $u$ be a solution of \eqref{6.main2}. Assume that $u$ is stable on $\rn\setminus B_R$. Then for every $p\in[1,\bar \alpha)$ there exists $C=C(p)>0$ such that for $r$ large
\begin{equation}
\label{6.32.est-0}
\int_{B_{2r}\setminus B_r}e^{pu}dx\leq Cr^{n-4p}.
\end{equation}
In particular,
\begin{itemize}
\item[(i)] for $|x|$ large,
\begin{align}
\label{6.32.est-1}
\int_{B_{|x|/2}(x)}e^{pu (z)}dz\leq C(p)|x|^{n-4p}, \quad\forall p\in[1,\bar \alpha),
\end{align}
\item[(ii)] for $r$ large
\begin{align}
\label{6.32.est-2}
\int_{B_r\setminus  B_{2R}}e^{pu}dx\leq C(p)r^{n-4p}, \quad\forall p\in[1,\min\{\frac{n}{4},\alpha^*\}).
\end{align}
\end{itemize}
\end{lemma}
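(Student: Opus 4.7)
The plan is to follow the same strategy as in the proof of Proposition \ref{pr32.1}, adapted to the local fourth order setting. The starting point is the base estimate
\[
\int_{B_r} e^{u^\lambda} dx \leq C r^{n-4}, \qquad r\geq 1, \ \lambda\geq 1,
\]
supplied by Lemma \ref{6.cr31.1}. Setting $v=-\Delta u$ and using the hypothesis that $\Delta u$ vanishes at infinity together with the decay of $-\Delta v=e^u$, one gets the Riesz representation
\[
v(x)=c(n)\int_{\mathbb R^n}\frac{e^{u(z)}}{|x-z|^{n-2}}\,dz.
\]
Splitting $\mathbb R^n=B_{2r}\cup(\mathbb R^n\setminus B_{2r})$ and bounding each piece with the base estimate in the same manner as the computation following \eqref{32.u}, we obtain $\int_{B_r} v\,dx\leq C r^{n-2}$.

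From here I would run a standard local elliptic estimate on the equation $-\Delta v=e^u$ (with $e^u\in L^1_{\mathrm{loc}}$) to upgrade the $L^1$ control of $v$ to an $L^p$ control for every $p\in [1,\tfrac{n}{n-2})$, on each dyadic annulus: namely, after rescaling,
\[
\int_{B_{2r}\setminus B_r} v^p\,dx\leq C r^{n-2p}, \qquad r\geq 1, \ p\in[1,\tfrac{n}{n-2}).
\]
Combining this with the base estimate on $e^u$, the hypothesis \eqref{6.32.condition} of Lemma \ref{6.le32.4} is satisfied for some $\alpha\in(\alpha^\sharp,\min\{n/4,1\})$, which requires $\tfrac{n}{n-2}>2\alpha^\sharp$, a harmless condition in the dimensional range $5\leq n\leq 12$ of Theorem \ref{6.mainth}.

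Next I would iterate Lemma \ref{6.le32.4}: each application replaces $\alpha$ by either $\alpha+\tfrac12$ or $\tfrac{n}{n-2}\alpha$, while the exponent on $r$ is updated correspondingly. This bootstrap continues as long as $\alpha<\min\{n/4,\alpha^*\}$, and the definition of $\bar\alpha$ is precisely designed so that after finitely many steps we exhaust the admissible range and reach
\[
\int_{B_{2r}\setminus B_r} e^{pu}\,dx \leq C(p)\,r^{n-4p} \quad \text{for every } p\in[1,\bar\alpha),
\]
which is \eqref{6.32.est-0}. Part (i) then follows immediately since, for $r=|x|$ large, $B_{|x|/2}(x)\subset B_{3r/2}\setminus B_{r/2}$, and this annulus is covered by at most finitely many dyadic shells of the form $B_{2r'}\setminus B_{r'}$. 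Part (ii) is obtained by writing $r=2^{N_1}$, summing \eqref{6.32.est-0} over the dyadic annuli $B_{2^{k+1}}\setminus B_{2^k}$ for $k$ between $\log_2(2R)$ and $N_1$, exactly as in the estimate \eqref{32.alphau}; the geometric series converges thanks to $n-4p>0$, which is the reason for the stricter bound $p<\min\{n/4,\alpha^*\}$ in (ii).

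The only real step where one must be careful is the initial estimate for $v$: the representation formula only makes sense because $\Delta u$ vanishes at infinity, and the elliptic upgrade to $L^{n/(n-2)-\varepsilon}$ must be done locally (on annuli) since no global integrability of $v$ is available. Once that base estimate is in hand, the bootstrap is purely mechanical and the summation arguments for (i) and (ii) are routine.
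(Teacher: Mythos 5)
Your proposal matches the paper's (largely implicit) proof of this lemma, which simply transplants the argument of Proposition \ref{pr32.1} to the local fourth-order setting: base $L^1$ bound from Lemma \ref{6.cr31.1}, Riesz representation of $v=-\Delta u$, local elliptic upgrade to $L^p$ on annuli for $p<\frac{n}{n-2}$, finite iteration of Lemma \ref{6.le32.4}, and the dyadic covering/summation arguments for (i) and (ii). The only slip is cosmetic: the initial admissible range should be $\alpha\in\bigl(\alpha^\sharp,\min\{\tfrac{n}{2(n-2)},1\}\bigr)$ (the constraint $2\alpha<\tfrac{n}{n-2}$ coming from the $v^{2\alpha}$ term), not $\min\{n/4,1\}$, but you state the correct nonemptiness condition $\tfrac{n}{n-2}>2\alpha^\sharp$ and the bootstrap proceeds unchanged.
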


Before we give the estimation for each term appeared in the monotonicity formula, we make the following preparation.

\begin{lemma}
\label{6.le-s1}
For $\delta>0$ there exists $C=C(\delta)>0$ such that
\begin{equation*}
\int_{\B}\dfrac{e^{u^\l}}{1+|x|^{n-4+\delta}}dx\leq C,
\quad \forall\lambda\geq1.
\end{equation*}
\end{lemma}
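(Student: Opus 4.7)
The plan is to mimic exactly the argument used for Lemma \ref{le31.2} in the fractional setting, with the role of Corollary \ref{cr31.1} now played by the analogous $L^1$ bound Lemma \ref{6.cr31.1}, namely $\int_{B_r} e^{u^\lambda}\,dx \leq C r^{n-4}$ for all $r \geq 1$ and $\lambda \geq 1$.

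First I would decompose $\mathbb{R}^n$ dyadically as $B_1 \cup \bigcup_{i=0}^\infty (B_{2^{i+1}} \setminus B_{2^i})$. On $B_1$ the weight $(1+|x|^{n-4+\delta})^{-1}$ is bounded by $1$, so Lemma \ref{6.cr31.1} with $r=1$ gives a uniform bound on that piece. On the dyadic shell $B_{2^{i+1}} \setminus B_{2^i}$, the weight satisfies $1 + |x|^{n-4+\delta} \geq 2^{i(n-4+\delta)}$, while Lemma \ref{6.cr31.1} applied on $B_{2^{i+1}}$ gives $\int_{B_{2^{i+1}}} e^{u^\lambda}\,dx \leq C\,2^{(i+1)(n-4)}$.

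Combining these two estimates on each shell yields
\begin{equation*}
\int_{B_{2^{i+1}}\setminus B_{2^i}} \frac{e^{u^\lambda}}{1+|x|^{n-4+\delta}}\,dx \;\leq\; C\,2^{n-4}\,2^{-i\delta},
\end{equation*}
and summing the resulting geometric series in $i$ (which converges precisely because $\delta > 0$) produces a constant depending only on $n$ and $\delta$, uniform in $\lambda \geq 1$. No step here is genuinely delicate: the only thing to observe is that the uniform-in-$\lambda$ character of the estimate is inherited directly from the uniform-in-$\lambda$ character of Lemma \ref{6.cr31.1}, which itself comes from the scaling invariance of the stability inequality for $u^\lambda$. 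So the argument is essentially a line-by-line repetition of the proof of Lemma \ref{le31.2}, with $2s$ replaced by $4$.
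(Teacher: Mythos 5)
Your proposal is correct and is essentially identical to the paper's own proof, which also splits $\mathbb{R}^n$ into $B_1$ and dyadic shells $\{2^i\le|x|<2^{i+1}\}$, bounds each shell via Lemma \ref{6.cr31.1} and the lower bound on the weight, and sums the resulting geometric series $\sum_i 2^{-i\delta}$. No further comment is needed.
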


\begin{proof}
By Lemma \ref{6.cr31.1}, we have	
\begin{equation*}
\int_{\rn}\frac{e^{u^\l}}{1+|x|^{n-4+\delta}}dx
\leq C\int_{B_1}e^{u^\l}(x)dx+
\sum_{i=0}^\infty\int_{2^i\leq|x|<2^{i+1}}\frac{e^{u^\l}}{1+|x|^{n-4+\delta}}dx\leq C+C\sum_{i=0}^\infty\frac{1}{2^{i\delta}}<+\infty.
\end{equation*}	
Hence we finish the proof.		
\end{proof}

Similar to the fractional case,  setting
$$w^\l:=c(n)\int_{\rn}\left(\frac{1}{|x-z|^{n-4}}-\frac{1}{(1+|z|)^{n-4}}\right)e^{u^\l(z)}dz,$$
where $c(n)$ is chosen such that
$$c(n)\Delta^2\left(\frac{1}{|x-z|^{n-4}}\right)=\delta(x-z).$$
One can check that $w^\l\in L_{\mathrm{loc}}^1(\rn)$. In addition, following the ideas appeared in the proof of Lemma \ref{le31.3}, Lemma \ref{le31.5} and Lemma \ref{le32.5}, we have the following conclusion.

\begin{lemma}
\label{6.le-s2}	
For any solution of \eqref{6.main2} which is stable outside a compact set and $\Delta u$ vanishes at infinity, we have $w^\l\in L_{2}(\rn)$ and
\begin{equation*}
u^\l(x)=c(n)\int_{\rn}\left(\frac{1}{|x-z|^{n-4}}-\frac{1}{1+|z|^{n-4}}\right)e^{u^\l(z)}
dy+b_\l,
\end{equation*}
for some constant $b_\l.$ In addition, if $n<4\alpha^*$, $u$ has the following asymptotic behavior for $|x|$ large
\begin{equation*}
u(x)\leq-4\log|x|+C.
\end{equation*}
\end{lemma}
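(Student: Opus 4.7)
The proof will closely parallel the fractional analogues in Lemmas~\ref{le31.3}, \ref{le31.5} and \ref{le32.5}, with $c(n)|x-z|^{4-n}$ playing the role of the Riesz kernel and $\Delta^2$ replacing $\s$; throughout I will lean on the uniform $L^1$ bound of Lemma~\ref{6.cr31.1} and the weighted decay of Lemma~\ref{6.le-s1}.

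For the membership $w^\l \in L_2(\rn)$, I will estimate
\[
E(z) := \int_{\rn} \frac{|f(x,z)|}{1+|x|^{n+4}}\,dx, \qquad f(x,z)=\frac{1}{|x-z|^{n-4}}-\frac{1}{(1+|z|)^{n-4}},
\]
for $|z| \ge 2$. Splitting $\rn$ into the four regions $\{|x|\le |z|/2\}$, $\{|x|\ge 2|z|\}$, $B(z,|z|/2)$ and the remaining annulus, the mean value theorem yields $|f(x,z)|\le C(1+|x|)/|z|^{n-3}$ on the first region and crude bounds $|f(x,z)|\le C/|z|^{n-4}+C\chi_{B(z,|z|/2)}/|x-z|^{n-4}$ on the others. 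Direct integration then furnishes $|E(z)| \le C|z|^{3-n}$, and Fubini together with Lemma~\ref{6.le-s1} delivers the weighted $L^1$ bound on $w^\l$, uniformly in $\l \ge 1$.

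For the representation formula, I will set $h^\l := u^\l - w^\l$ and observe that $\Delta^2 h^\l = 0$ distributionally. The classical Liouville theorem for biharmonic functions with polynomial growth (applicable because the log-type lower bound $w^\l(x) \ge -C\log|x|$ --- obtained, as in Lemma~\ref{le31.5}, by dyadically splitting the defining integral and applying Lemma~\ref{6.cr31.1} --- combines with $\int_{B_r} e^{u^\l}\le Cr^{n-4}$ to control $h^\l$) forces $h^\l$ to be a polynomial of degree at most three. Feeding $u^\l \ge -C\log|x|+h^\l(x)$ back into $\int_{B_r} e^{u^\l}\le Cr^{n-4}$ via Jensen's inequality rules out cubic terms and leaves $h^\l = b_\l + \vec{d}_\l\cdot x + e_\l|x|^2$ with $e_\l \le 0$. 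Finally, the hypothesis $\Delta u \to 0$ at infinity forces $\Delta h^\l \equiv 0$, which collapses $h^\l$ to the constant $b_\l$.

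For the a priori upper bound under $n < 4\alpha^*$, I will introduce
\[
W(r) := c(n)\int_{|z|\le r}\frac{e^{u(z)}}{(1+|z|)^{n-4}}\,dz,
\]
which is locally bounded by Lemma~\ref{6.cr31.1} and monotone in $r$. The pointwise estimate $\bigl||x-z|^{4-n}-(1+|z|)^{4-n}\bigr|\le C|x|/|z|^{n-3}$ controls the far region $|z|\ge \tfrac{3}{2}|x|$, while on the local piece $B(x,|x|/2)$ I will apply H\"older's inequality with an exponent $p \in (n/4,\bar\alpha)$; such a $p$ exists precisely because $n < 4\alpha^*$, and the resulting bound $\int_{B(x,|x|/2)} e^{u(z)}|x-z|^{4-n}dz \le C$ follows from $(n-4)p' < n$ together with Lemma~\ref{6.pr32.1}(i). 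Plugging these into the representation formula gives $u(x) = -W(|x|) + O(1)$ for $|x|$ large, and Jensen's inequality applied to $\frac{1}{|B_r|}\int_{B_r}e^{u(z)}dz\le Cr^{-4}$, combined with the monotonicity of $W$, yields $u(x)\le -4\log|x|+C$.

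The main obstacle will be the rigidity step in the second part: the biharmonic Liouville theorem a priori permits $h^\l$ to be a general cubic polynomial, so the eventual reduction to a constant must be pieced together from three independent inputs (the log growth of $w^\l$, the integrability of $e^{u^\l}$, and the asymptotic vanishing of $\Delta u$). Unlike the fractional case in Lemma~\ref{le31.5}, where the vanishing condition on $\ss u$ could be read off directly from the kernel's structure, here the Laplacian appears one derivative removed from $u$, so some care is required to convert "$\Delta u\to 0$" into the vanishing of the linear and quadratic parts of $h^\l$.
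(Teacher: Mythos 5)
Your proposal is correct and follows exactly the route the paper intends: the paper gives no proof of this lemma beyond pointing to Lemmas \ref{le31.3}, \ref{le31.5} and \ref{le32.5} (and Lemmas \ref{6.cr31.1}, \ref{6.le-s1}), and your argument is the faithful $s=2$ transcription of those proofs, including the kernel estimate $|E(z)|\le C|z|^{3-n}$, the decomposition $u^\l=w^\l+h^\l$ with $h^\l$ biharmonic of degree at most three, and the $W(r)$/Jensen argument for the upper bound. The one point to tighten is the very end of the rigidity step: $\Delta u\to 0$ at infinity only kills the quadratic part $e_\l|x|^2$ of $h^\l$ (since $\Delta h^\l=2ne_\l$), and the residual linear term $\vec{d}_\l\cdot x$ must then be eliminated by feeding $u^\l\ge -C\log|x|+\vec{d}_\l\cdot x$ back into $\int_{B_r}e^{u^\l}dx\le Cr^{n-4}$, which would otherwise grow exponentially in $r$ --- the same (slightly terse) resolution the paper uses in Lemma \ref{le31.5}.
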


Concerning the monotonicity formula, we have
\begin{equation*}
E(\lambda,0,u)=E(1,0,u^\lambda).
\end{equation*}
We now provide an estimation for the boundary integration of $u^\l$.

\begin{lemma}
\label{6.le33.1}
\begin{equation*}
\int_{\partial B_1} u^\l d\sigma=\omega_nb_\l+O(1),
\end{equation*}
where $b_\l$ is introduced in Lemma \ref{6.le-s2} and $\omega_n$ is the surface area of $\mathbb{S}^{n-1}$.
\end{lemma}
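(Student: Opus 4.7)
The plan is to invoke the representation formula of Lemma~\ref{6.le-s2},
\[
u^\l(x) = b_\l + w^\l(x), \qquad w^\l(x) = c(n)\int_{\B}\left(\frac{1}{|x-z|^{n-4}}-\frac{1}{(1+|z|)^{n-4}}\right)e^{u^\l(z)}\, dz,
\]
so that after integration over $\partial B_1$,
\[
\int_{\partial B_1} u^\l\, d\sigma = \omega_n b_\l + \int_{\partial B_1} w^\l\, d\sigma.
\]
The lemma therefore reduces to showing $\int_{\partial B_1} w^\l\, d\sigma = O(1)$ uniformly in $\l \ge 1$. This is the local, fourth-order analogue of Lemma~\ref{le33.1}: the Poisson-type weight $y^{3-2s}\, d\sigma$ on $\partial B_1^{n+1}\cap \r$ is replaced by the plain surface measure on $\partial B_1\subset \B$, and the overall strategy carries over.

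After applying Fubini, the main task is to bound $\int_{\B}\Psi(z) e^{u^\l(z)}\, dz$, where
\[
\Psi(z) := \int_{\partial B_1}\left|\frac{1}{|x-z|^{n-4}}-\frac{1}{(1+|z|)^{n-4}}\right|\, d\sigma(x).
\]
I would split the $z$-integration into the far region $|z|\ge 2$ and the near region $|z|\le 2$. For $|z|\ge 2$ and $x\in\partial B_1$ the mean value theorem (together with $|x-z|,\, 1+|z| \in [|z|/2,\, 2|z|]$) yields $\Psi(z)\le C|z|^{-(n-3)}$, and Lemma~\ref{6.le-s1} with $\delta = 1$ gives
\[
\int_{|z|\ge 2} e^{u^\l(z)}\Psi(z)\, dz \le C\int_{\B}\frac{e^{u^\l(z)}}{1+|z|^{n-3}}\, dz \le C.
\]
For the near region, I would estimate the two summands of $\Psi$ separately. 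The term $(1+|z|)^{-(n-4)}$ is uniformly bounded on $B_2$, so its contribution is controlled by $\int_{B_2} e^{u^\l}\, dz = O(1)$ via Lemma~\ref{6.cr31.1}. The singular term requires swapping the order of integration once more, reducing matters to verifying
\[
\sup_{z\in B_2}\int_{\partial B_1}\frac{d\sigma(x)}{|x-z|^{n-4}} \le C,
\]
after which Lemma~\ref{6.cr31.1} again closes the estimate.

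The only slightly delicate point is the uniform integrability displayed above, since $z$ may lie on or arbitrarily close to $\partial B_1$. The worst case is $z\in \partial B_1$ itself, for which surface coordinates around $z$ reduce the integral to $\int_0^{c} r^{(n-2)-(n-4)}\, dr = \int_0^{c} r^2\, dr < \infty$ in every dimension $n\ge 5$; uniformity in $z\in \overline{B_2}$ follows by compactness and a standard continuity argument. Combining the far and near estimates gives $\int_{\partial B_1} w^\l\, d\sigma = O(1)$, and the lemma follows. No step requires new ingredients beyond what has already been developed in the fractional analysis of Lemma~\ref{le33.1}.
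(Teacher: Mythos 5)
Your proposal is correct and follows essentially the same route as the paper: write $u^\l = w^\l + b_\l$, split the $z$-integration into a far region (where the kernel difference decays like $|z|^{-(n-3)}$ and Lemma \ref{6.le-s1} applies) and a near region (where the bounded term is handled by Lemma \ref{6.cr31.1} and the singular term by the uniform bound $\int_{\partial B_1}|x-z|^{-(n-4)}\,d\sigma \le C$). The only difference is cosmetic (splitting at $B_2$ rather than $B_4$), and your explicit verification of the surface-integral bound via the exponent count $\int_0^c r^{(n-2)-(n-4)}\,dr<\infty$ fills in a step the paper merely asserts.
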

\begin{proof}
Using Lemma \ref{6.le-s2} we have
\begin{equation*}
\int_{\partial B_1}u^\l d\sigma=\int_{\partial B_1}w^\l d\sigma+\omega_nb_\l.
\end{equation*}
Using Lemma \ref{6.le-s1} again, we have
\begin{equation*}
\begin{aligned}
\left|\int_{\partial B_1}w^\l d\sigma\right|
&\leq C\int_{\partial B_1}\int_{\rn\setminus B_4}\frac{1+|x|}{|z|^{n-3}}e^{u^\l(z)}dz+
\int_{\partial B_1}\int_{B_4}\left(\frac{1}{(1+|z|)^{n-4}}
+\frac{1}{|x-z|^{n-4}}\right)e^{u^\l(z)}dzd\sigma\\
&\leq C+\int_{\partial B_1}\int_{B_4}\frac{1}{|x-z|^{n-4}}e^{u^\l(z)}dzd\sigma.
\end{aligned}
\end{equation*}
To estimate the second term on the right we notice that
$$\int_{\partial B_1}\frac{1}{|x-z|^{n-4}}d\sigma\leq C,$$
combined with Lemma \ref{6.cr31.1} we have
\begin{equation*}
\int_{\partial B_1}\int_{B_4}\frac{1}{|x-z|^{n-4}}e^{u^\l(z)}dzd\sigma\leq C.
\end{equation*}
Hence we finish the proof.
\end{proof}

For the first quadratic term in $E(r,0,u)$, we have the following estimation

\begin{lemma}
\label{6.le33.3}
We have
\begin{equation*}
\int_{B_r}|v^\l |^2dx =\int_{B_r}|\Delta u^\l |^2dx\leq C r^{n-4},\quad \forall r>0,\,\l\geq1.
\end{equation*}
\end{lemma}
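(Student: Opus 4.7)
The plan is to mirror the proof of Lemma \ref{le33.2} from the fractional setting. A scaling computation shows $\int_{B_r}|v^\lambda|^2\,dx = \lambda^{4-n}\int_{B_{\lambda r}} v^2\,dy$, so once the estimate $\int_{B_R} v^2\leq CR^{n-4}$ is established for $R\geq 1$, the case $\lambda r\geq 1$ follows immediately; the residual case $\lambda r<1$ is absorbed by the interior continuity of $v$ on a fixed compact (since $u\in C^4$). I therefore work with $\lambda=1$ and $r\geq 1$. Because $\Delta u$ vanishes at infinity and $-\Delta v = e^u$, the Newtonian representation on $\mathbb{R}^n$ gives
\[
v(x)=c_n\int_{\mathbb{R}^n}\frac{e^{u(z)}}{|x-z|^{n-2}}\,dz.
\]

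Fix $R>0$ such that $u$ is stable outside $B_R$, and decompose $B_r=B_{4R}\cup(B_r\setminus B_{4R})$ for $r\geq 4R$. The inner piece contributes $\|v\|_{L^\infty(B_{4R})}^2|B_{4R}|\leq C(R)\leq Cr^{n-4}$. For $4R<|x|\leq r$, I split the integral representation of $v(x)$ into the three regions $|z|\leq 2R$, $2R<|z|\leq 2r$, and $|z|>2r$, writing the latter two as $I_1(x)$ and $I_2(x)$. The innermost contribution is bounded by $C|x|^{-(n-2)}$ via $|x-z|\geq |x|/2$, while a dyadic decomposition combined with Lemma \ref{6.cr31.1} yields $I_2(x)\leq Cr^{-2}$. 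Both pieces integrate to $O(r^{n-4})$, using $n>4$.

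The main work is the middle term $I_1$, and here I split into two dimensional regimes. When $5\leq n\leq 8$ one has $n<4\alpha^*$, so Lemma \ref{6.le-s2} supplies $u(z)\leq -4\log|z|+C$ for $|z|$ large; the Riesz-type convolution identity $|x|^{-(n-2)}\ast|x|^{-4}\sim |x|^{-2}$, valid since $n>4$, then yields $I_1(x)\leq C|x|^{-2}$ and $\int_{B_r\setminus B_{4R}}I_1^2\leq Cr^{n-4}$. When $9\leq n\leq 12$ the log bound is no longer available, but $2<\min\{n/4,\alpha^*\}$, so Lemma \ref{6.pr32.1}(ii) provides $\int_{B_{2r}}e^{2u}\leq Cr^{n-8}$. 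Applying Cauchy--Schwarz with respect to the measure $|x-z|^{-(n-2)}dz$ on $B_{2r}$ (whose total mass is $\leq Cr^2$) and Fubini gives $\int_{B_r}I_1^2\leq Cr^4\int_{B_{2r}}e^{2u}\leq Cr^{n-4}$. The essential obstacle is precisely this dimensional case split: the pointwise log-decay from Lemma \ref{6.le-s2} fails once $n$ crosses $4\alpha^*\approx 10.14$, while the $L^2$-integrability of $e^u$ requires $n>8$, and it is the dimensional window $5\leq n\leq 12$ of Theorem \ref{6.mainth} that makes the two regimes overlap and cover everything. Once $I_1$, $I_2$, and the inner piece are summed, one obtains $\int_{B_r}v^2\leq Cr^{n-4}$, and scaling recovers the stated bound uniformly in $\lambda\geq 1$.
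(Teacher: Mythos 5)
Your proposal is correct and is essentially the argument the paper intends: the paper's own "proof" of Lemma \ref{6.le33.3} simply states that it is the same as Lemma \ref{le33.2}, and your write-up is a faithful transcription of that fractional argument to $s=2$ (Newtonian representation of $v$, the three-region split, the Riesz composition under the log-bound of Lemma \ref{6.le-s2} for $n\le 8$, and the Cauchy--Schwarz/$L^2$-bound of $e^u$ from Lemma \ref{6.pr32.1} for $n>8$, the two regimes being exactly complementary as in the condition $4s\ge n$ versus $n>4s$ of the fractional proof). No gaps.
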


\begin{proof}
The proof is as same as Lemma \ref{le33.2}, we omit the details.
\end{proof}

\begin{proposition}
\label{6.pr33.1}
We have $b_\lambda=O(1)$ for $\lambda\in [1,\infty)$. Moreover,
\begin{equation*}
\lim_{\l\to+\infty}E(\l,0,u)=\lim_{\l\to\infty}E(1,0,u^\l)<\infty.
\end{equation*}
\end{proposition}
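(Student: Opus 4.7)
The plan is to mirror the argument of Proposition \ref{pr33.1} in the local fourth order setting. By Theorem \ref{6.mono} the map $r \mapsto E(r, 0, u)$ is non-decreasing, and the scaling identity $E(\lambda r, 0, u) = E(r, 0, u^\lambda)$ combined with this monotonicity yields the sandwich
\begin{equation*}
E(1, 0, u) \le E(1, 0, u^\lambda) \le \int_1^2 \int_t^{t+1} E(\gamma, 0, u^\lambda)\, d\gamma\, dt,
\end{equation*}
so the whole task reduces to bounding this double average uniformly in $\lambda \ge 1$ with an explicit linear dependence on $b_\lambda$.

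Next I would estimate each piece of $E(\gamma, 0, u^\lambda)$ for $\gamma \in [1, 3]$. The bulk term $\gamma^{4-n}\int_{B_\gamma}(\tfrac12 |\Delta u^\lambda|^2 - e^{u^\lambda})\, dx$ is $O(1)$ by Lemma \ref{6.le33.3} and Lemma \ref{6.cr31.1}. The two $\frac{d}{d\gamma}$-terms in the formula, after the outer integration in $t$, telescope into boundary differences, which are in turn dominated by solid integrals of $(\partial_r u^\lambda + 4/r)^2$ and $|\nabla u^\lambda|^2 - |\partial_r u^\lambda|^2$ on a fixed annulus. Both are controlled by $\|\Delta u^\lambda\|_{L^2(B_4)}^2$ plus $\|u^\lambda\|_{L^2(B_4)}^2$ via the classical interior interpolation estimate for the biharmonic operator (the unweighted analogue of Lemma \ref{le3.interpolation}); these $L^2$ norms are in turn controlled by Lemma \ref{6.le33.3} together with the decomposition $u^\lambda = w^\lambda + b_\lambda$ from Lemma \ref{6.le-s2} and a local $L^2$ bound on $w^\lambda$ inherited from its representation formula as in Lemma \ref{le33.12}, giving an overall bound that depends only linearly on $|b_\lambda|$. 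The critical term carrying the leading dependence on $b_\lambda$ is
\begin{equation*}
8(n-2)\gamma^{1-n}\int_{\partial B_\gamma}(u^\lambda + 4\log\gamma)\, d\sigma,
\end{equation*}
which, by the natural extension of Lemma \ref{6.le33.1} to spheres of radius $\gamma \in [1, 3]$, equals $8(n-2)\omega_n b_\lambda + O(1)$ after averaging.

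Collecting these estimates yields the one-sided inequality
\begin{equation*}
E(1, 0, u) \le 8(n-2)\omega_n b_\lambda + O(1),
\end{equation*}
which immediately gives the uniform lower bound $b_\lambda \ge -C$. For the upper bound, Lemma \ref{6.le-s2} provides $u^\lambda = w^\lambda + b_\lambda$, and the argument establishing that lemma (following Lemma \ref{le31.5}) shows $w^\lambda$ is bounded below on $B_1$ uniformly in $\lambda$. Jensen's inequality combined with Lemma \ref{6.cr31.1} then yields
\begin{equation*}
\log\!\left(\frac{1}{|B_1|}\int_{B_1} e^{u^\lambda}\, dx\right) \ge \frac{1}{|B_1|}\int_{B_1} u^\lambda\, dx \ge -C + b_\lambda,
\end{equation*}
with the left-hand side bounded above again by Lemma \ref{6.cr31.1}, so $b_\lambda \le C$ as well.

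With $|b_\lambda| \le C$ in hand, every term in $E(1, 0, u^\lambda)$ is uniformly bounded in $\lambda$, and monotonicity of $\lambda \mapsto E(\lambda, 0, u) = E(1, 0, u^\lambda)$ then forces the limit as $\lambda \to \infty$ to exist and be finite. The main obstacle, as in the fractional case, is producing the classical biharmonic interpolation that estimates the tangential and radial boundary integrals by $\|\Delta u^\lambda\|_{L^2}$ and $\|u^\lambda\|_{L^2}$ on a slightly larger ball; once that is in place, the remainder is a mechanical transcription of the estimates from Proposition \ref{pr33.1}.
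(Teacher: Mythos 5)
Your proposal follows the same route as the paper, which for this proposition simply defers to ``similar arguments'' from Proposition \ref{pr33.1}; your reconstruction of that argument in the local fourth--order setting (monotonicity sandwich, term--by--term estimates of the averaged energy, extraction of the $8(n-2)\omega_n b_\lambda$ term, the resulting lower bound on $b_\lambda$, then the representation formula plus Jensen for the upper bound) is exactly the intended proof.

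One point needs tightening. You claim that the tangential--gradient/interpolation terms are ``controlled by $\|\Delta u^\lambda\|_{L^2(B_4)}^2$ plus $\|u^\lambda\|_{L^2(B_4)}^2$ \ldots giving an overall bound that depends only linearly on $|b_\lambda|$.'' Taken literally this is not sufficient: feeding $u^\lambda=w^\lambda+b_\lambda$ into $\|u^\lambda\|_{L^2}^2$ produces a term of order $b_\lambda^2$, and even a term $C|b_\lambda|$ with an uncontrolled constant $C$ would spoil the deduction of $b_\lambda\ge -C$ from $E(1,0,u)\le 8(n-2)\omega_n b_\lambda+O(1)$, since for $b_\lambda\to-\infty$ the right-hand side could still tend to $+\infty$. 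The correct observation --- and the one the paper makes in the fractional case, where it writes $\int y^{3-2s}|\nabla u_e^\lambda|^2=\int y^{3-2s}|\nabla w_e^\lambda|^2$ --- is that $\nabla u^\lambda=\nabla w^\lambda$ and $\Delta u^\lambda=\Delta w^\lambda$, so the interpolation inequality should be applied to $w^\lambda$, yielding a bound by $\|\Delta w^\lambda\|_{L^2}^2+\|w^\lambda\|_{L^2}^2$ on a slightly larger ball, which is independent of $b_\lambda$ (Lemma \ref{6.le33.3} together with the analogue of Lemma \ref{le33.12}). With that correction the only $b_\lambda$--dependence left in the upper bound is the single linear term, and the rest of your argument goes through.
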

\begin{proof}
From the above discussion we can apply a similar arguments in Proposition \ref{pr33.1} to show that
\begin{equation*}
b_\l ~\ \mbox{is bounded from below for}~\ \l\geq1.
\end{equation*}
By Lemma \ref{6.le-s2} we have
\begin{equation*}
u^\l=w^\l+b_\l,
\end{equation*}
with $w^\l$ being a $L^1$ function and bounded from below locally. Using Lemma \ref{6.cr31.1} we derive that $b_\l\leq C$ for $\l\geq 1.$ Thus $b_\l$ is bounded. From Lemma \ref{6.le33.1}, we conclude that
\begin{equation*}
\int_{\partial B_1} u^\l(x)d\sigma=O(1).
\end{equation*}
Then $E(\l,0,u)$ is bounded uniformly in $\l$ and it completes the proof.
\end{proof}

Similar arguments as in Lemma \ref{le33.6} yield the following estimate.

\begin{lemma}
\label{6.le33.6}
For every $r>1$ and $\l\geq1$,  we have
\begin{equation*}
\int_{B_r}(|u^\l|^2+|\Delta u^\l|^2)dx\leq C(r).
\end{equation*}		
\end{lemma}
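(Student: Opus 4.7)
The plan is to mimic the proof of Lemma \ref{le33.6} in the fractional setting, which combined the $L^2$ control of the convolution piece $w_e^\l$ (Lemma \ref{le33.12}), the $L^2$ control of $\Delta_b u_e^\l$ (Lemma \ref{le33.3}), and the uniform boundedness of the additive constant $c_\l$ (Proposition \ref{pr33.1}). In the fourth-order local case all three ingredients are already essentially in place, so the main task is to assemble them correctly.

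First, I would use Lemma \ref{6.le-s2} to write
\begin{equation*}
u^\l(x)=w^\l(x)+b_\l,\qquad w^\l(x)=c(n)\int_{\mathbb{R}^n}\Bigl(\tfrac{1}{|x-z|^{n-4}}-\tfrac{1}{(1+|z|)^{n-4}}\Bigr)e^{u^\l(z)}dz,
\end{equation*}
and invoke Proposition \ref{6.pr33.1} to get $|b_\l|\le C$ uniformly for $\l\ge 1$. Since $\D u^\l=\D w^\l$, the bound on $\int_{B_r}|\D u^\l|^2dx$ is \emph{exactly} Lemma \ref{6.le33.3} (which actually gives the sharper $Cr^{n-4}$). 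Thus the whole lemma reduces to proving $\int_{B_r}|w^\l|^2dx\le C(r)$ uniformly in $\l\ge 1$, after which a trivial $(a+b)^2\le 2a^2+2b^2$ split completes the proof.

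Second, to estimate $\int_{B_r}|w^\l|^2 dx$ I would run the domain decomposition of Lemma \ref{le33.12} without the weight $y^{3-2s}$. Splitting the integration in $z$ into $B_{2r}$ and $\mathbb{R}^n\setminus B_{2r}$: on the far region, the difference of kernels is pointwise $\le C(1+|x|)/|z|^{n-3}$ for $|x|\le r$, so Lemma \ref{6.le-s1} (with $\delta=1$) gives a uniform bound on that contribution. For the near part, one needs to control
\begin{equation*}
\int_{B_r}\Bigl(\int_{B_{2r}}\tfrac{e^{u^\l(z)}}{|x-z|^{n-4}}dz\Bigr)^2 dx.
\end{equation*}
Here one splits into two dimensional regimes exactly as in Lemma \ref{6.le33.3}: when $n\le 4\alpha^*$ (so in particular $n\in\{5,\ldots,12\}$ falls in the relevant case at least partially), the pointwise bound $u(x)\le -4\log|x|+C$ from Lemma \ref{6.le-s2} together with a direct estimate of $\int |z|^{-4}|x-z|^{-(n-4)}dz$ yields the desired $C(r)$; when $n>8$, Hölder's inequality in the measure $dz/|x-z|^{n-4}$ together with Proposition \ref{6.pr32.1}(ii) applied with $p=2$ gives $\int_{B_{2r}}e^{2u^\l}dz\le Cr^{n-8}$, after which Fubini produces the bound.

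The main (minor) obstacle is just verifying that $p=2$ is admissible in Proposition \ref{6.pr32.1}(ii), i.e.\ $2<\min\{n/4,\alpha^*\}$, in the regime where we need it (this is fine since $\alpha^*\approx 2.53$ and we only invoke this branch when $n>8$). With these ingredients the assembly is immediate: $|u^\l|^2\le 2|w^\l|^2+2b_\l^2$ integrated on $B_r$ together with Lemma \ref{6.le33.3} yields the claimed bound $\int_{B_r}(|u^\l|^2+|\D u^\l|^2)dx\le C(r)$.
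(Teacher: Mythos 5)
Your proposal is correct and matches the paper's intended argument: the paper proves this lemma by simply invoking the same three ingredients you assemble (the uniform bound on $b_\l$ from Proposition \ref{6.pr33.1}, the $L^2$ bound on $\Delta u^\l$ from Lemma \ref{6.le33.3}, and an $L^2$ bound on the convolution piece $w^\l$ obtained by the near/far kernel decomposition with the two dimensional regimes, exactly as in the fractional Lemmas \ref{le33.12} and \ref{le33.2}). Your verification that the two regimes ($n\le 4\alpha^*$ via the pointwise decay of Lemma \ref{6.le-s2}, and $n>8$ via H\"older with $p=2<\alpha^*$ in Proposition \ref{6.pr32.1}(ii)) jointly cover all $n\ge 5$ is the right sanity check and there is no circularity, since Proposition \ref{6.pr33.1} precedes this lemma.
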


We  are now ready to give the proof of Theorem \ref{6.mainth}.

\begin{proof}[Proof of Theorem \ref{6.mainth}.]
Let  $u$  be  finite Morse index solution to \eqref{6.main2} for some  $n>4$ satisfying  \eqref{6.stability}. Let $R>1$ be such  that $u$   is stable outside the ball  $B_R$. From the above Lemma \ref{6.le33.6} we obtain that there exists a sequence $\lambda_i\to+\infty$ such that $u^{\lambda_i}$ converges weakly in ${H}^2_{\mathrm{loc}}(\rn)$ to a function $u^\infty$. In addition, we have $u^{\l_i}\to u^\infty$ almost everywhere. As a consequence, for any $\varphi\in C_c^\infty(\B)$
\begin{equation*}
\lim\limits_{i\to\infty}\int_{\B}u^{\lambda_i}\Delta^2\varphi dx=\int_{\B}u^{\infty}\Delta^2\varphi dx.
\end{equation*}
Next, we need to prove that $e^{u^{\lambda_i}}$ converge to $e^{u^{\infty}}$ in $L_{\mathrm{loc}}^1(\B).$ By \eqref{6.32.est-1} we can easily see that $e^{u^{\lambda_i}}$ is uniformly integrable in $L_{\mathrm{loc}}^1(\B\setminus\{0\})$. Using  \eqref{6.32.est-2}, around the origin point we get
\begin{equation*}
\int_{B_{\varepsilon}}e^{u^{\lambda_i}}dx=\lambda_i^{4-n}\int_{B_{\l_i\varepsilon}}e^{u}dx\leq C\varepsilon^{n-4}.
\end{equation*}
Therefore, we have $e^{u^{\lambda_i}}$ is uniformly integrable in $L_{\mathrm{loc}}^1(\B)$,  and together with $u^{\lambda_i} \to u^\infty$ a.e., we get for any $\varphi\in C_c^\infty(\B)$
\begin{equation*}
\lim\limits_{i\to\infty}\int_{\B}e^{u^{\lambda_i}}\varphi dx=\int_{\B}e^{u^{\infty}}\varphi dx.
\end{equation*}
Then $u^\infty$ satisfies \eqref{6.main2} in the weak sense. Now, following the arguments in the proof of Theorem \ref{main-th} one can  show that $u^\infty$ is homogeneous. In addition, $u^\infty$ is also stable in $\rn\setminus\{0\}$ because of the stability condition for $u^{\l_i}$ passing to the limit. Then from Theorem \ref{6.thmhom} we get $n\geq13$, which is a contradiction. This completes the proof.
\end{proof}

\begin{center}
{\bf Acknowledgement}
\end{center}

The research of the second author is partially supported by NSERC of
Canada. The research of the third author is partially supported by NSFC No.11801550 and NSFC No.11871470. The third author is thankful to Ali Hyder for the fruitful discussion.

\end{document}